\numberwithin{equation}{section}
\renewcommand*{\d}{\, \mathrm{d}}
\def\mathclap#1{\text{\hbox to 0pt{\hss$\mathsurround=0pt#1$\hss}}}
\newtheorem{thm}{Theorem}[section]
\newtheorem{lemma}[thm]{Lemma}
\newtheorem{theorem}[thm]{Theorem}
\newtheorem{remark}[thm]{Remark}
\def\rr{\mathbb{R}}
\def\cn{\mathbb{C}}
\def\bu{{\bf{u}}}
\def\bv{{\bf{V}}}
\def\bsv{{\bf{v}}}
\def\ue{u^e}
\def\ve{v^e}
\def\cn{\mathbb{C}}
\newcommand{\R}{\mathbb{R}}
\newcommand{\LL}{L}
\title[]{ Hardy uniqueness principle for the linear Schr\"odinger equation on quantum regular trees }
\author[A. F. Bertolin, A.Grecu, L.I. Ignat ]{Aingeru Fern\'andez Bertolin \and Andreea Grecu  \and Liviu I. Ignat
}
\address{A. Fern\'andez-Bertolin
\hfill\break\indent Universidad del Pa\'is Vasco / Euskal Herriko Unibertsitatea\\
\hfill\break\indent  Dpto. de Matem\'aticas \\ Apto. 644 \\ Bilbao \\ Spain  
}
  \email{{\tt
aingeru.fernandez@ehu.eus}}
\address{A. Grecu
\hfill\break\indent University of Bucharest \\ 14 Academiei Street \\010014 Bucharest, Romania\\  
\hfill\break\indent Institute of Mathematics ``Simion Stoilow'' of the Romanian Academy\\
 21 Calea Grivitei Street \\010702 Bucharest \\ Romania 
}
 \email{{\tt
andreea.grecu@my.fmi.unibuc.ro}}
\address{L. I. Ignat
\hfill\break\indent Institute of Mathematics ``Simion Stoilow'' of the Romanian Academy,
Centre Francophone en Math\'{e}matique
\\21 Calea Grivitei Street \\010702 Bucharest \\ Romania\\
 \hfill\break\indent  ICUB,   The Research Institute of the University of Bucharest, University of Bucharest\\
36-46 Bd. M. Kogalniceanu,   050107, Bucharest, Romania\\
}
 \email{{\tt
liviu.ignat@gmail.com}   {\it Web page: }{\tt
http://www.imar.ro/\~\,lignat}}
\begin{document}

\keywords{Schr\"{o}dinger equation, Unique continuation, Uncertainty principle, Quantum graphs\\
\indent 2010 {\it Mathematics Subject Classification: 35B05, 35R02, 35B60  }}
%35B05  	Oscillation, zeros of solutions, mean value theorems, etc.
%35B60  	Continuation and prolongation of solutions
%35B45  	A priori estimates
%35Q41: Time-dependent Schrödinger equations, Dirac equations
%35R02: Partial differential equations on graphs and networks (ramified or polygonal spaces)

\begin{abstract}
In this paper we consider the linear Schr\"odinger equation (LSE) on a regular tree with the last generation of edges of infinite length and analyze some unique continuation properties. The first part of the paper deals with the LSE on the real line with a piece-wise constant coefficient and uses this result in the context of regular trees. The second part treats the case of a LSE with a real potential in the framework of a star-shaped graph.

\end{abstract}

\maketitle

\section{Introduction}
For any function $f\in L^2(\rr) $ we consider its Fourier transform
\[
\hat{f}(\xi)=\frac 1{\sqrt{2\pi}}\int_{\rr} \mathrm{e}^{-\mathrm{i} x\xi} f(x)\d x,\ \xi\in \rr.
\] 
%We easily recover $f$ from  its Fourier transform 
%\[
%\check{f}(x)=\frac 1{\sqrt{2\pi}}\int_{\rr} e^{i x\xi} f(\xi)d\xi,\ a.e.\ \xi\in \rr.
%\] 
With the above definition in mind, the well known Hardy's uniqueness principle (HUP) \cite[Theorem 2]{hardyup}, asserts that if $f$ and $\hat{f}$ are both $O(\mathrm{e}^{-\frac{1}{2}x^2})$, then $f=g=A \mathrm{e}^{-\frac{1}{2}x^2}$, with $A$ a constant, and if one is $o(\mathrm{e}^{-\frac{1}{2}x^2})$, then both are identically zero. As a consequence, if
\begin{equation*}
f(x)=O(\mathrm{e}^{-\alpha x^2}) \text{ and } \hat{f}(\xi)=O(\mathrm{e}^{-\beta \xi^2})
\end{equation*}
 with $\alpha, \beta>0$ such that $\alpha\beta >1/4$, then $f\equiv 0$. This result is sharp, in the sense that if $
\alpha\beta=1/4$  then $f$ is a multiple of $\mathrm{e}^{-\alpha x^2}$. Morgan \cite{morganup} extends this result to any conjugate exponents $p$ and $p'=\frac{p}{p-1}$ with $p > 2$. More precisely, if 
\begin{equation*}
f(x)=O(\mathrm{e}^{-\alpha x^p}) \text{ and } \hat{f}(\xi)=O(\mathrm{e}^{-\beta \xi^{p'}}) \quad \text{ as } |x|, |\xi| \to +\infty,
\end{equation*}
 with $\alpha, \beta>0$ such that $\alpha^{1/p} \beta^{1/p'} > \frac{1}{p^{1/p} p'^{1/p'}} |\cos(\frac{\pi p'}{2})|^{1/p'}$, then $f\equiv 0$. This result is also sharp. One-sided versions of these results are obtained by Nazarov \cite[Theorem 2.3]{nazarov}: for $p \in [2,\infty]$ if 
\begin{equation*}
f(x)=O(\mathrm{e}^{-\alpha x^p}) \text{ and } \hat{f}(\xi)=O(\mathrm{e}^{-\beta \xi^{p'}}) \quad \text{ as } x, \xi \to -\infty \text{ or } +\infty,
\end{equation*}
 with $\alpha, \beta>0$ such that $\alpha^{1/p} \beta^{1/p'} > \frac{1}{p^{1/p} p'^{1/p'}} \sin(\frac{\pi}{p'})$, then $f\equiv 0$. The exponents in this case are also the best possible. 

 Cowling and Price \cite{cowlingprice} extend to $L^p, L^{q}$ versions: if $1 \leq p,q \leq \infty$ with at least one of them finite and
 \[
 \| \mathrm{e}^{\alpha x^2} f\|_{L^p(\rr)}+ \| \mathrm{e}^{\beta \xi^2} \hat f\|_{L^{q}(\rr)}<\infty,
 \]
with $\alpha, \beta>0$, such that $\alpha\beta > 1/4$, then $f\equiv 0$. The proofs of the above results use complex analysis techniques, and similar results in terms of the unique solution in $C(\mathbb{R},L^2(\mathbb{R}))$ of the linear Schr\"odinger equation 
\begin{align}\label{sch.real.line}
\left\{
\begin{array}{ll}
\mathrm{i} u _t(t,x)+\Delta  u(t,x)=0,& x\in \rr, t\neq 0 ,\\
u(0)=u_0,&   x\in \rr
\end{array}
\right.
\end{align}
can be obtained, see for e.g. \cite{cowlingekpv2010}. Using the Fourier transform  the solution $u$ of the above sistem can be written as 
%convolution $u(t)=k_t\ast u_0$, where $k_t$ is the Schr\"{o}dinger kernel
%\[
%k_t(x)= \frac{1}{\sqrt{4 \pi \mathrm{i} t}} \mathrm{e}^{\mathrm{i} \frac{x^2}{4 t}}.
%\]
%Writing this explicit representation of the solution, we obtain that
 \[
u(t,x)=\dfrac{1}{\sqrt{2 \mathrm{i} t}} \mathrm{e}^{\frac{\mathrm{i}|x|^2}{4t}} \Big( \mathrm{e}^{\frac{\mathrm{i}|\cdot|^2}{4t}}u_0\Big)\, \widehat{}\,(\frac {x}{2t}).
 \]
 This representation and the above property of the Fourier transform show that the unique solution of system \eqref{sch.real.line} satisfying $u(0,x)=O(\mathrm{e}^{-\alpha x^2})$, $u(T,x)=O(\mathrm{e}^{-\beta x^2})$ as $|x| \to \infty$, with 
\[
\alpha \beta > \frac{1}{16 T^2},
\]
vanishes identically. $L^p$-versions of these results hold also under the same assumption.
% that $\alpha \beta > 1/(16T^2)$. 
For convenience, in the following we will consider the case $T=1$. 
 
 In this paper we obtain similar results for the Schr\"odinger equation on trees. 
Let us consider the Schr\"odinger equation on   a  tree   $\Gamma$: 
\begin{align}\label{eq.tree}
\left\{
\begin{array}{ll}
\mathrm{i}\bu _t(t,x)+\Delta_\Gamma \bu(t,x)=0,& x\in \Gamma, t\neq 0 ,\\
\bu(0)=\bu_0,&   x\in \Gamma,
\end{array}
\right.
\end{align}
where with $\Delta_\Gamma$ is the Laplace operator on $\Gamma$ with  the Kirchhoff coupling condition at the vertices (see section \ref{notations} for a precise definition).

Our main result concerning the HUP for the above system is obtained in the context of regular trees. These are particular cases of trees having the property that all the edges of the same generation have the same length and all the vertices of the same generation have equal number of children. In the following we write $f\lesssim g$ if there exists a positive constant $C$, depending on $f$ and $g$, such that $f\leq C g$.

\begin{theorem} \label{free-sch}
	Let $\Gamma$ be a regular tree and $\alpha$ and $\beta$ such that $\alpha 
	\beta>1/16$. Any solution $\bu\in C(\rr,L^2(\Gamma)) $ of problem \eqref{eq.tree} that satisfies
	\begin{align}
\label{decay}
 |\bu(0,x)|\lesssim  \mathrm{e}^{-\alpha x^2}, \ |\bu(1,x)|\lesssim \mathrm{e}^{-\beta x^2}, \quad \forall\ x\in \Gamma
\end{align}
 vanishes identically. 
\end{theorem}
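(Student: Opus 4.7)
The strategy is to decompose $\bu$ according to the symmetries of the regular tree into orthogonal components, reduce the Schr\"odinger evolution on each component to a one-dimensional equation with piecewise constant coefficients, and then apply the HUP for such 1D equations established in the first part of the paper.

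First, I would exploit the rich automorphism group of $\Gamma$: the permutations of sibling vertices at each generation act unitarily on $L^2(\Gamma)$ and commute with $\Delta_\Gamma$. Using this symmetry, I would split $L^2(\Gamma)$ as an orthogonal direct sum of invariant subspaces: the fully radial subspace (functions depending only on the distance to the root) together with, for each internal vertex $v$ of generation $j$, the subspaces of functions supported on the subtree below $v$ whose restriction to the children of $v$ is a mean-zero vector that is radially repeated inside each sub-subtree. After absorbing the weights coming from the branching numbers, the restriction of $\Delta_\Gamma$ to each such subspace is unitarily equivalent to a 1D Laplacian on a half-line with Kirchhoff/Dirichlet condition at the origin and piecewise constant coefficients, whose jumps occur at the positions of the ancestor vertices and are determined by the edge lengths $\ell_j$ and branching numbers $b_j$. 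An odd (respectively even) reflection at the origin then turns each half-line problem into a full-line Schr\"odinger equation with piecewise constant coefficients, which is precisely the class treated in the first part of the paper.

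Second, writing $\bu=\sum_\lambda \bu_\lambda$, each $\bu_\lambda$ solves one of these 1D reduced equations. Since every projector in the decomposition is a finite average within a sibling level, the pointwise bounds $|\bu(0,x)|\lesssim \mathrm{e}^{-\alpha x^2}$ and $|\bu(1,x)|\lesssim \mathrm{e}^{-\beta x^2}$ transfer to the corresponding bounds on $\bu_\lambda(0,\cdot)$ and $\bu_\lambda(1,\cdot)$ with the \emph{same} exponents $\alpha,\beta$, once the 1D variable is taken to be the natural parametrization of the reduced problem by distance to the root. Here it is crucial that the reduction neither shifts nor rescales the exponent so that the sharp threshold $\alpha\beta>1/16$ is preserved.

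Third, I would apply the HUP for the 1D Schr\"odinger equation with piecewise constant coefficients established earlier in the paper: under $\alpha\beta>1/16$ it forces each $\bu_\lambda\equiv 0$, and summing over $\lambda$ yields $\bu\equiv 0$.

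The main obstacle is the construction of the decomposition and the translation of the Kirchhoff coupling at internal vertices into the correct transmission/Dirichlet condition at the jump points of the reduced 1D problem, so that the latter genuinely falls within the scope of the earlier HUP result. Once this reduction is set up cleanly, the transfer of the Gaussian decay is essentially automatic and the theorem follows from the 1D case.
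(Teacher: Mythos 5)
Your proposal is correct and follows essentially the same route as the paper: sibling-averaging (equivalently, the symmetry decomposition into the radial component and the mean-zero components rooted at each internal vertex), reflection of each reduced half-line problem to the full line, a piecewise-linear rescaling producing a divergence-form equation with piecewise constant $\sigma$ satisfying $\sigma_{\pm}=1$ so that the Gaussian exponents are preserved at infinity, and then an application of Theorem \ref{piecewise theorem}. The only difference is organizational — you set up the full orthogonal direct-sum decomposition at once, whereas the paper realizes the same decomposition iteratively via the averaged functions $Z^{\bar{\alpha}}$ and the differences $\widetilde{Z}^{\overline{\alpha\beta}}$, inducting down the generations.
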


Using the arguments in \cite{ignatsiam} one can reduce the properties of the solutions of the LSE on a regular tree to the analysis of the LSE involving a piecewise constant coefficient $\sigma$. 
Theorem \ref{free-sch} is a consequence of the following result for the linear Schr\"odinger equation with a piecewise constant coefficient $\sigma : \mathbb{R} \to \mathbb{R}$ taking a finite number of positive values:
	\begin{equation}\label{eq.sigma}
\left\{
\begin{array}{ll}
\mathrm{i} u _t(t,x)+ \partial_x (\sigma \partial_x u)(t,x)=0,& x\in \rr, t\neq 0 ,\\ 
u(0,x)=u_0(x),&   x\in \rr.
\end{array}
\right.
\end{equation} 
For a precise statement we introduce the two values of $\sigma$ at $\pm \infty$:
\[
\sigma_ {-}=\lim _{x\rightarrow -\infty} \sigma(x),\ \sigma_ {+}=\lim _{x\rightarrow +\infty} \sigma(x).
\]

\begin{theorem} \label{piecewise theorem}
	\label{piecewise}
Any solution $u\in C(\rr,L^2(\rr))$ of system \eqref{eq.sigma} satisfying for some positive $\alpha, \beta$ one of the following
assumptions 
\begin{equation} \label{2 moments decay}
\begin{aligned}
&(i) \ |u(0,x)|\lesssim \mathrm{e}^{-\alpha x^2}, \ |u(1,x)|\lesssim \mathrm{e}^{-\beta x^2}, \quad \text{ as } x \to -\infty, \quad  \alpha\beta > \dfrac{1}{16\sigma _{-}^2},\\
&(ii) \ |u(0,x)|\lesssim \mathrm{e}^{-\alpha x^2}, \ |u(1,x)|\lesssim \mathrm{e}^{-\beta x^2}, \quad \text{ as } x \to +\infty,  \quad \alpha\beta > \dfrac{1}{16 \sigma_{+}^2 }, \\
&(iii) \ |u(0,x)|\lesssim \mathrm{e}^{-\alpha x^2}, \ |u(1,x)|\lesssim \mathrm{e}^{-\beta x^2}, \quad \text{ as } |x| \to \infty,  \quad \alpha\beta > \dfrac{1}{16 \max \{\sigma _{-}^2, \sigma_{+}^2 \}},
 \end{aligned}
\end{equation}
vanishes identically. Moreover, in the case when $\sigma$ is a two-step piecewise constant function (i.e, it takes only two values), these exponents are sharp.
\end{theorem}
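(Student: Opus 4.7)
The plan is to follow the Escauriaza--Kenig--Ponce--Vega (EKPV) strategy of logarithmic convexity for Gaussian-weighted $L^2$ norms along the Schr\"odinger flow, adapted to the piecewise constant coefficient setting. As a first step I would reduce the principal part of \eqref{eq.sigma} to $\partial_{yy}$ via the geodesic change of variable $y=\int_0^x ds/\sqrt{\sigma(s)}$ together with the amplitude rescaling $v(t,y)=\sigma(x(y))^{1/4}u(t,x(y))$, so that on every interval where $\sigma$ is constant $v$ solves the free Schr\"odinger equation $\mathrm{i}v_t+v_{yy}=0$, coupled at the finitely many jumps of $\sigma$ by self-adjoint transmission conditions (continuity of $v$ up to a twisted trace, balance of the flux $\sqrt{\sigma}\,\partial_y v$). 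Since $\sigma$ is bounded above and bounded below, $y\sim x/\sqrt{\sigma_+}$ as $x\to+\infty$ and $y\sim x/\sqrt{\sigma_-}$ as $x\to-\infty$, so the hypothesised bounds $\mathrm{e}^{-\alpha x^2}$, $\mathrm{e}^{-\beta x^2}$ turn into $\mathrm{e}^{-\alpha\sigma_\pm y^2}$, $\mathrm{e}^{-\beta\sigma_\pm y^2}$ for $v$, and the thresholds $\alpha\beta\sigma_\pm^2>1/16$ become exactly the classical Hardy threshold $1/16$ for the free Schr\"odinger equation.

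The core argument is then a transcription of EKPV to the transformed problem. Taking an Appell-type weight $\phi(t,y)=a(t)y^2$ and $H(t)=\int_{\rr}|v(t,y)|^2 \mathrm{e}^{2\phi(t,y)} dy$, I would compute $H'$ and $H''$ using the equation, obtaining the usual symmetric positive commutator term (responsible for $(\log H)''\ge 0$ in the free case) plus boundary contributions at each discontinuity of $\sigma$ coming from the integration by parts. The decisive technical check is that these boundary contributions cancel thanks to the self-adjointness of the transmission conditions, so the convexity of $\log H$ on $[0,1]$ survives. Once this is established, the Gaussian upper bounds at $t=0$ and $t=1$ combined with either the classical two-sided Hardy argument (case (iii)) or the Nazarov one-sided refinement (cases (i), (ii)) force $H\equiv 0$, hence $v\equiv 0$, hence $u\equiv 0$.

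For the one-sided cases (i) and (ii) I cannot use a symmetric weight; instead I would use the Nazarov modification with a smoothed one-sided weight like $\phi(t,y)=a(t)(y_+)^2$ and absorb the errors on the irrelevant side by standard energy estimates, using that outside a bounded region containing all jumps the equation is the free Schr\"odinger equation with coefficient $\sigma_{\pm}$. Sharpness when $\sigma$ has only two values follows from an explicit construction: the fundamental solution of \eqref{eq.sigma} with a single jump decomposes into transmitted and reflected Gaussian wave packets, and one can exhibit a Gaussian initial datum that saturates $\alpha\beta=1/(16\sigma_{\pm}^2)$ in closed form.

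The main obstacle I anticipate is the careful bookkeeping at the transmission points inside the $H''$ identity: the rescaled amplitude $v$ and its $y$-derivative are both discontinuous at each jump, so one must verify that the boundary pairs produced by the Gaussian weight $\mathrm{e}^{2\phi}$ genuinely cancel, using only the algebraic content of the self-adjoint transmission condition. A secondary but substantial difficulty is the Nazarov regime for one-sided decay, where the truncation of the weight has to be arranged so that its support is disjoint from (or compatible with) the finitely many jumps of $\sigma$, and the commutator errors in the transition region have to be controlled by energy methods before the convexity argument can be closed.
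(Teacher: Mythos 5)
Your proposal takes a genuinely different route from the paper, but it contains a gap at its central step. The paper does not use logarithmic convexity at all for Theorem \ref{piecewise theorem}: it writes the solution for $x\le 0$ explicitly via the Gaveau--Okada kernels \eqref{solution piecewise}--\eqref{kernel piecewise}, computes the reflection/transmission coefficients $C^{\pm}_{1k}$ through products of transfer matrices, uses Wiener's theorem (Lemma \ref{denominator_expansion}) to show that the reflected part of the kernel is a superposition of free kernels translated only to the \emph{right}, and thereby obtains $u(t,x)=(k_t\ast\eta)(a_1x)$ for $x\le0$ with $\eta=u_0(\cdot/a_1)$ on $(-\infty,0)$ (Lemma \ref{vanishing on negatives}). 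This converts $u(1,\cdot)$ into a genuine Fourier transform of a modulation of $\eta$, so Nazarov's one-sided theorem applies directly and gives vanishing on $x\le 0$; an induction then peels off the leftmost interval and repeats. Your plan never establishes this Fourier-transform representation, yet you invoke ``the classical two-sided Hardy argument'' and ``the Nazarov one-sided refinement'' as if they applied to $H(t)$; those are statements about a function and its Fourier transform, not about weighted $L^2$ norms, and for the piecewise problem the needed representation is precisely the hard part.

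The more serious gap is your assertion that the boundary contributions at the jumps of $\sigma$ ``cancel thanks to the self-adjointness of the transmission conditions'' in the $H''$ identity. Self-adjointness guarantees cancellation of the boundary pairings arising in $\frac{d}{dt}\int|v|^2$, but the convexity computation produces additional interface terms weighted by $\phi_y$, $\phi_{yy}$ and $\phi_y^3$ evaluated at the (interior, nonzero) jump points, and these carry no definite sign: both $v$ and $\sqrt{\sigma}\,\partial_yv$ jump by $\sigma$-dependent ratios, so the pairs do not telescope. The paper's own Carleman inequality for the star graph (Lemma \ref{lemma-carleman}) illustrates exactly this obstruction: to control the vertex terms the authors must average over $N$ carefully chosen weights with $\sum_k\alpha_j^k=0$ and $\sum_k(\alpha_j^k)^2$ independent of $j$, and even then they only reach the non-sharp constant $\gamma_\Gamma$ of \eqref{gamma}. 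A single Gaussian weight of the form $a(t)y^2$ (or $a(t)(y_+)^2$) will not make the interface terms vanish, and without that your convexity inequality, and hence the whole argument, does not close. Your sharpness construction for the two-step case is in the right spirit and matches the paper's explicit Gaussian example, but the uniqueness half of the proof needs either the paper's explicit-kernel route or a genuinely new argument to handle the interface terms.
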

In spirit of \cite{cowlingprice}, $L^2$-versions may be obtained, but it its beyond the scope of this paper. 

In the case of Schr\"odinger equation with a potential $\bv=(V_1,\dots, V_N):[0,1]\times\Gamma\rightarrow\cn$  we can prove a similar result  in the case of a star-shaped tree. Here $\Gamma$ is viewed as a collection of $N$ infinite intervals $(0,\infty)$ coupled at the origin. We consider the following critical exponent
\begin{equation}
\label{gamma}
 \gamma_\Gamma=\frac 12
 \left\{
  \begin{aligned}
  	1, & \quad N\ \text{even},\\
  	\frac{m+1}m, &\quad  N=2m+1.
  \end{aligned}
\right.
\end{equation}

\begin{theorem}
	\label{main-potential}
	Let $\alpha,\beta$ such that $\alpha\beta >4\gamma_\Gamma^4$. Assume the solution $\bu \in C([0,1],L^2(\Gamma))$ of equation
	\begin{equation}
\label{eq.potential}
  \bu_t=i(\Delta_\Gamma+\bv(t,x))\bu \quad \text{in}\ [0,1]\times \Gamma
\end{equation}
satisfies
\begin{equation}
\label{cond.at.zero}
  \|\mathrm{e}^{\alpha x^2 } \bu(0)\|_{L^2(\Gamma)} 
%\end{equation}
%\begin{equation}
%\label{cond.at.uno}
 + \|\mathrm{e}^{\beta x^2 } \bu(1)\|_{L^2(\Gamma)}<\infty,
\end{equation}
where $\bv(t,x)=\bv_1(x)+\bv_2(t,x)$ with $\bv_1$ real-valued, $\|\bv_1\|_{L^\infty(\Gamma)}\le M_1$ and
\[
\sup_{t\in [0,1]}\|\mathrm{e}^{\frac{\alpha\beta|x|^2}{(\sqrt{\alpha} t+(1-t)\sqrt{\beta})^2}}\bv_2(t)\|_{L^\infty(\Gamma)}<+\infty.
\]
Then $\bu$ vanishes identically.
\end{theorem}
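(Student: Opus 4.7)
The plan is to adapt the log-convexity strategy of Escauriaza-Kenig-Ponce-Vega (EKPV) for the $L^2$-Hardy uncertainty principle for Schr\"odinger evolutions with potentials to the star-shaped quantum graph setting, the chief novelty being the treatment of boundary terms at the central vertex produced by the Kirchhoff coupling. Since $\bv_1$ is real, bounded and time-independent, it can be absorbed in a Carleman framework without affecting the sharp constants, while $\bv_2$ is treated perturbatively thanks to its Gaussian bound, which is precisely tuned to the weights appearing in the argument.

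I would introduce the standard log-convexity functional $H(t):=\|e^{\phi(t,\cdot)}\bu(t)\|_{L^2(\Gamma)}^2$ with a time-dependent Gaussian weight $\phi(t,x)=\mu(t)x^2$ that interpolates between $\alpha x^2$ at $t=0$ and $\beta x^2$ at $t=1$, compute $H'$ and $H''$ using $\partial_t\bu=i(\Delta_\Gamma+\bv)\bu$, and integrate by parts edge-by-edge. The computation splits naturally into four pieces: a positive ``bulk'' term, identical to the one produced on $\mathbb{R}$ in EKPV; a perturbative contribution coming from $\bv_2$, controlled by its Gaussian bound; boundary terms at infinity that vanish by the Gaussian decay of $\bu$; and boundary terms localized at the vertex $x=0$. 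The goal of this step is a differential inequality $(\log H)''(t)\geq c(\alpha,\beta)-\textnormal{errors}$ on $[0,1]$ with the right constant.

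The crucial new ingredient is the analysis of the vertex boundary contribution. Writing the common vertex value $u(t,0):=u_1(t,0)=\cdots=u_N(t,0)$ from continuity and using the single linear relation $\sum_{j=1}^N\partial_xu_j(t,0)=0$ from Kirchhoff, the vertex term reduces to a quadratic form in $u(t,0)$ and in the $N$ outgoing derivatives $\partial_xu_j(t,0)$. Eliminating one derivative via Kirchhoff and applying a sharp Cauchy-Schwarz estimate to the remaining cross term, the unavoidable residual factor depends on the parity of $N$: when $N=2m$ is even, enough symmetry is available and one obtains the value $\gamma_\Gamma=1/2$; when $N=2m+1$ is odd, an intrinsic imbalance forces the larger value $\gamma_\Gamma=(m+1)/(2m)$, exactly as in \eqref{gamma}. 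Pushing this factor through the EKPV convexity machinery reproduces the threshold $\alpha\beta>4\gamma_\Gamma^4$.

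Once log-convexity of $H$ is secured with this sharp constant, the argument closes along classical EKPV lines: $H(0)$ and $H(1)$ are finite by \eqref{cond.at.zero}, and the differential inequality, combined with $\alpha\beta>4\gamma_\Gamma^4$, forces $H\equiv 0$ on $[0,1]$, whence $\bu\equiv 0$. The main obstacle I foresee is the third step: pinning down the exact quadratic form produced at the vertex after two integrations by parts and checking that the Cauchy-Schwarz optimization is sharp, so that the parity-dependent exponent $\gamma_\Gamma$ in \eqref{gamma} emerges as the true constant rather than merely as an upper bound. Along the way one also has to verify that the perturbation $\bv_2$ can indeed be absorbed at this level of precision, which in turn is why the Gaussian bound on $\bv_2$ is required to involve the combination $\alpha\beta|x|^2/(\sqrt{\alpha}\,t+(1-t)\sqrt{\beta})^2$ matching the Carleman weight.
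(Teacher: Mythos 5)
Your overall EKPV framework is the right one, and your instinct that the vertex is the source of the parity-dependent exponent is in the right spirit, but as written the argument has a genuine gap at the final step. Log-convexity of $H(t)=\|\mathrm{e}^{\mu(t)x^2}\bu(t)\|_{L^2(\Gamma)}^2$, i.e.\ a bound $(\log H)''\ge -C$ (or even $\ge c>0$), only yields the interpolation estimate $H(t)\lesssim H(0)^{1-\theta(t)}H(1)^{\theta(t)}$; it cannot force $H\equiv 0$. Indeed $H(t)=\mathrm{e}^{ct^2}$ satisfies $(\log H)''=2c>0$ with $H(0),H(1)$ finite and never vanishes. That interpolation bound is precisely the content of the paper's Theorem \ref{thm56}, and there it is only a preparatory step. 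The vanishing conclusion requires a second, independent ingredient that your proposal omits: the Carleman inequality \eqref{ineg-carleman} of Lemma \ref{lemma-carleman} for the \emph{translated} weights $\varphi_j^k=\mu|\alpha_j^k x+Rt(1-t)|^2-(1+\epsilon)R^2t(1-t)/(16\mu)$, applied to space--time truncations $q_j=\theta_M\eta_R u_j$. One absorbs the potential for $R$ large, bounds the cutoff errors using Theorem \ref{thm56}, extracts a lower bound of size $\mathrm{e}^{cR^2}\|\bu(0)\|_2$ from the region where the weight is large, lets $M\to\infty$, and only then does $R\to\infty$ give $\bu\equiv 0$. Without this stage the argument does not close.

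Second, the mechanism producing $\gamma_\Gamma$ is not the one you describe. For the symmetric weight $\mu(t)x^2$ the spatial derivative vanishes at the vertex, so the vertex quadratic form in the log-convexity computation is handled by the Kirchhoff condition alone and produces no parity-dependent loss; this is consistent with Theorem \ref{thm56} carrying no $\gamma_\Gamma$. The constant $\gamma_\Gamma$ enters through the Carleman weights, whose slopes $\alpha_j$ at the vertex must satisfy $\sum_j\alpha_j=0$ (so that the weight is Kirchhoff-compatible and the antisymmetric boundary term $J_5$ vanishes), while positivity of the convexity term forces $|\alpha_j^k|\ge 1$ on every edge; moreover one must use all $N$ cyclic permutations $\bm{\alpha}^k$ and sum over $k$, so that $\sum_k\alpha_j^k=0$ and $\sum_k(\alpha_j^k)^2$ is independent of $j$, which is what kills the remaining vertex cross terms. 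For $N=2m+1$ these constraints force $\max_j|\alpha_j|\ge (m+1)/m=2\gamma_\Gamma$, and it is this enlarged slope — not a Cauchy--Schwarz optimization of a single vertex quadratic form — that degrades the threshold to $\alpha\beta>4\gamma_\Gamma^4$. A single weight, as in your plan, leaves vertex cross terms of indefinite sign that Kirchhoff alone cannot remove once the weight is translated.
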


The above result is not sharp. In fact when  all the components of $\bv$ are equal, i.e. $V_1\equiv V_2\equiv \dots \equiv V_N$, the result can be improved by using the same strategy as in the proof of Theorem \ref{free-sch} of making the sum of the components and using the real line result. In this case $\gamma_\Gamma$ corresponds to the one in \cite{ekpvsharp}, $\gamma_\Gamma=1/\sqrt{8}$.

\medskip

The paper is organised as follows. In Section \ref{notations} we present the notations and preliminaries about metric graphs and the Schr\"odinger equation on a metric graph. 
In Section \ref{hup-trees} we  
 consider the simple case of a star-shaped tree and give a sketch of how Theorem \ref{free-sch} can be proven in this particular case. Also we show how Theorem \ref{piecewise theorem} implies Theorem \ref{free-sch}.  Theorem \ref{piecewise theorem} is proved in Section \ref{piecewise-section}. Sections  \ref{carleman-tree} and \ref{potential-tree} are devoted to the case of the LSE with a potential on a star-shaped tree.

\section{Notations and Preliminaries }\label{notations}

In this section we present some generalities about metric graphs and introduce the Laplace operator on such structure.
Let $\Gamma=(V,E)$ be a graph where $V$ is a set of vertices and $E$ the set of edges. 
For each $v\in V$ we denote  $E_v=\{e\in E: v\in e\}$.
We assume that $\Gamma$ is a  finite connected graph. The edges could be of finite length and then their ends are vertices of $V$ or they have infinite length and then we assume that each infinite edge is a ray with a single vertex belonging to $V$ (see \cite{MR2459876} for more details on graphs with infinite edges).

We fix an orientation of $\Gamma$ and for each finite oriented edge $e$, we have an initial vertex $I(e)$ and a terminal one $T(e)$. In the case of infinite edges we have only initial vertices.
We identify every edge $e$ of $\Gamma$ with an interval $I_e$, where $I_e=[0,l_e]$ if the edge is finite and $I_e=[0,\infty)$ if the edge is infinite. This identification introduces a coordinate $x_e$ along the edge $e$. In this way $\Gamma$ becomes a metric space, called metric graph  \cite{MR2459876}.
%
%
%Let $v$ be a vertex of $V$ and $e$ be an edge in  $E_v$.
% We set for finite edges $e$
%$$j(v,e)=\left\{
%\begin{array}{lll}
%0,& \text{if} &v=I(e), \\[10pt]
%l_e,& \text{if} & v=T(e) 
%\end{array}
%\right.
%$$
%and
%$$j(v,e)=0,\ \text{if}\ v=I(e)$$
%for infinite edges.

We identify any function $\bu$ on $\Gamma$ with a collection $\{\ue\}_{e\in E}$ of functions $\ue$ defined on the edges  $e$ of $\Gamma$. Each $\ue$ can be considered as a function on the interval $I_e$. In fact, we use the same notation $\ue$ for both the function on the edge $e$ and the function on the interval $I_e$ identified with $e$.
For a function $\bu:\Gamma\rightarrow \cn$,  $\bu=\{u^e\}_{e\in E}$,  we denote by $f(\bu):\Gamma\rightarrow \cn$ the family 
$(f(u^e))_{e\in E}$, where  $f(u^e):I_e\rightarrow\cn$.

The space $\LL^p(\Gamma)$, $1\leq p<\infty$ consists of all functions   $\bu=\{u_e\}_{e\in E}$ on $\Gamma$ that belong to $\LL^p(I_e)$
for each edge $e\in E$ and 
$$\|\bu\|_{\LL^p(\Gamma)}^p=\sum _{e\in E}\|u^e\|_{\LL^p(I_e)}^p<\infty.$$
Similarly, the space $\LL^\infty(\Gamma)$ consists of all functions that belong to $\LL^\infty(I_e)$ for each edge $e\in E$ and
$$\|\bu \|_{\LL^\infty(\Gamma)}=\max _{e\in E}\|u^e\|_{\LL^\infty(I_e)}<\infty.$$

The Sobolev space $H^m(\Gamma)$, with $m\geq 1$ an integer, consists of all functions with components that belong to
$H^m(I_e)$ for each $e\in E$ and 
$$\|\bu \|_{H^m(\Gamma)}^2=\sum _{e\in E}\|u^e\|_{H^m(e)}^2<\infty.$$
These are  Hilbert spaces with the inner products
$$(\bu,\bsv)_{\LL^2(\Gamma)}=\sum _{e\in E}(\ue,\ve)_{\LL^2(I_e)}=\sum _{e\in E}\int _{I_e}\ue(x)\overline{\ve}(x)\d x$$
and
$$(\bu,\bsv)_{H^m(\Gamma)}=\sum _{e\in E}(\ue,\ve)_{H^m(I_e)}= \sum _{e\in E}\sum _{k=0}^m\int _{I_e} \frac{d^k\ue}{dx^k} \overline{\frac{d^k\ve}{dx^k} }\d x.$$

%Even if it is a standard procedure we prefer for the sake of completeness to follow  \cite{MR1476363}.  Consider the sesquilinear continuous form $\varphi$
%on $H^1(\Gamma)$ defined by
%$$\varphi(\bu,\bv)=(\bu_x,\bv_x)_{L^2(\Gamma)}=\sum _{e\in E}\int _{I_e}\ue_x(x)\overline{\ve_x}(x)dx.$$
%We denote by $D(\Delta_\Gamma)$ the set of all the functions $\bu\in H^1(\Gamma)$ such that the linear map $\bv\in H^1(\Gamma)\rightarrow \varphi_{\bu}(\bv)=\varphi(\bu,\bv)$ satisfies
%$$|\varphi(\bu,\bv)|\leq C\|\bv\|_{L^2(\Gamma)}\quad \text{for all}\ \bv\in H^1(\Gamma).$$
%For $\bu\in D(\Delta_\Gamma)$, we can extend $\varphi_\bu$ to a linear continuous mapping on $\LL^2(\Gamma)$. There is a unique element in  $L^2(\Gamma)$ denoted by $\Delta_\Gamma \bu$, such that, 
%$$\varphi(\bu,\bv)=-(\Delta_\Gamma \bu,\bv)\quad \text{for all}\ \bv\in H^1(\Gamma).$$
Notice that a function from $H^m(\Gamma)$ has continuous components on the interior of edges, but there is no information about the continuity at the coupling at the vertices. A function $\bu=\{\ue\}_{e\in E}$ is continuous if and only if $\ue$ is continuous on $\mathring{I}_e$ for every $e\in E$, and moreover, it is continuous at the vertices of $\Gamma$:
$$\ue(v)=u^{e'}(v), \quad \forall \ e,e'\in E_v.$$ 

%Let us now define the normal exterior derivative of a function $\bu=\{u^e\}_{e\in E}$  at the endpoints of the edges.
%For each $e\in E$ and $v$ an endpoint of $e$ we consider the normal derivative of the restriction of $\bu$ to the edge $e$ of $E_v$ evaluated at $i(v,e)$ to be defined by:
%$$\frac {\partial u^e}{\partial n_e}(j(v,e))=
%\left\{
%\begin{array}{lll}
%-u_x^e(0+),&\text{if}& j(v,e)=0, \\[10pt]
%u_x^e(l_e-),& \text{if}&j(v,e)=l_e  .
%\end{array}
%\right.
%$$

We introduce the Laplace operator $\Delta_\Gamma$ on the graph $\Gamma$, with Kirchhoff coupling condition. This is a standard procedure and we refer to \cite{MR1476363} for a complete description. The domain of $\Delta_\Gamma$  (see \cite{MR1476363})
%\begin{equation*}
%\begin{aligned}
%D(\Delta_\Gamma)=\Big\{\bu=\{u^e\}_{e\in E}\in H^2(\Gamma): \ue(j(v,e))=u^{e'}(j(v,e')), \forall \ e,e'\in E_v, \\ 
% \sum _{e\in E_v} \frac {\partial u^e}{\partial n_e}(j(v,e))=0, \quad 
%\forall  v\in V\Big\}
%\end{aligned}
%\end{equation*}
 is the space of all continuous functions on $\Gamma$, $\bu=\{u^e\}_{e\in E}$,  such that for every edge $e\in E$,
$u^e\in H^2(I_e)$,  and satisfying the following Kirchhoff-type condition:
$$\sum _{e\in E: T(e)=v} u^e_x(l_e-)=\sum _{e\in E: I(e)=v}u_x^e(0+)  \quad \text{for all} \ v\in V.$$
It acts as the second derivative along the edges
$$(\Delta_\Gamma \bu)^e=(u^e_{xx})\quad \text{for all}\ e\in E, \bu\in D(\Delta_\Gamma).$$

It is easy to verify that $(\Delta_\Gamma, D(\Delta_\Gamma ) )$ is a linear, unbounded, self-adjoint, dissipative operator on $\LL^2(\Gamma)$, i.e. 
$(\Delta_\Gamma\bu,\bu)_{\LL^2(\Gamma)}\leq 0$ for all $\bu\in D(\Delta_\Gamma)$. Since $C_c^\infty(\Gamma)$, the space of functions which are $C^\infty$ on each edge and vanish outside some bounded set of $\Gamma$, is included in $D(\Delta_\Gamma)$ we obtain that $D(\Delta_\Gamma)$ is dense in any $L^p(\Gamma)$, $1\leq p<\infty$. All self-adjoint extensions of the Laplacian on such quantum graphs have been described in \cite{MR2277618} in terms of coupling conditions. Using  the properties of the  operator $\Delta_\Gamma$  we obtain as a consequence of the Hille-Yosida theorem the following well-posedness result.
\begin{theorem}\label{existence}
For any $\bu_0\in D(\Delta_\Gamma)$ there exists a unique solution $\bu(t)$ of system \eqref{eq.tree} that satisfies
$\bu\in C(\rr,D(\Delta_\Gamma))\cap C^1(\rr,\LL^2(\Gamma)).$
Moreover, for any $\bu_0\in \LL^2(\Gamma)$, there exists a unique solution $\bu\in C(\rr,\LL^2(\Gamma))$ that satisfies
$$\|\bu(t)\|_{\LL^2(\Gamma)}= \|\bu_0\|_{\LL^2(\Gamma)}\quad \text{for all}\ t\in \R.$$
\end{theorem}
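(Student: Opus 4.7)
The plan is to apply Stone's theorem (the form of Hille--Yosida appropriate for unitary $C_0$-groups) to $\Delta_\Gamma$, and then transfer the resulting abstract well-posedness into the statement about \eqref{eq.tree}. The key input is already recorded in the paper: $(\Delta_\Gamma, D(\Delta_\Gamma))$ is a densely defined self-adjoint operator on the Hilbert space $\LL^2(\Gamma)$, see \cite{MR1476363, MR2277618}. In practice one verifies symmetry by integrating by parts edge by edge,
\[
(\Delta_\Gamma \bu, \bsv)_{\LL^2(\Gamma)} = -\sum_{e \in E} \int_{I_e} u^e_x\, \overline{v^e_x} \d x + \sum_{v \in V} B_v(\bu, \bsv),
\]
and noting that each vertex contribution $B_v(\bu, \bsv)$ vanishes by combining continuity of $\bu$ and $\bsv$ at $v$ with the Kirchhoff condition imposed on both arguments; together with the surjectivity of $I - \Delta_\Gamma$ on $\LL^2(\Gamma)$ (a Lax--Milgram argument in the Kirchhoff form domain), this yields self-adjointness. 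Equivalently, dissipativity of $\pm \mathrm{i}\Delta_\Gamma$ together with the same range condition feeds directly into Hille--Yosida.

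Once self-adjointness is in hand, Stone's theorem produces a strongly continuous one-parameter group $\{S(t)\}_{t\in\rr}$ of unitary operators on $\LL^2(\Gamma)$ with infinitesimal generator $\mathrm{i}\Delta_\Gamma$. The candidate solution to \eqref{eq.tree} is then $\bu(t) := S(t) \bu_0$. For $\bu_0 \in D(\Delta_\Gamma)$, standard $C_0$-group theory gives $S(t)\bu_0 \in D(\Delta_\Gamma)$ for every $t \in \rr$, continuity of $t \mapsto S(t)\bu_0$ into $D(\Delta_\Gamma)$ endowed with the graph norm, and the differentiability relation $\tfrac{\mathrm{d}}{\mathrm{d}t} S(t)\bu_0 = \mathrm{i}\Delta_\Gamma S(t) \bu_0$ in $\LL^2(\Gamma)$; equivalently, $\bu \in C(\rr, D(\Delta_\Gamma)) \cap C^1(\rr, \LL^2(\Gamma))$ and \eqref{eq.tree} holds pointwise in $t$. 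For $\bu_0 \in \LL^2(\Gamma)$ the same definition $\bu(t) = S(t) \bu_0$ yields $\bu \in C(\rr, \LL^2(\Gamma))$ by approximation (using density of $D(\Delta_\Gamma)$ in $\LL^2(\Gamma)$ together with the $\LL^2$-isometry of each $S(t)$), and unitarity provides the conservation identity $\|\bu(t)\|_{\LL^2(\Gamma)} = \|\bu_0\|_{\LL^2(\Gamma)}$ for all $t \in \rr$.

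Uniqueness in the strong class follows from a direct energy argument: if $\bu, \widetilde{\bu} \in C(\rr, D(\Delta_\Gamma)) \cap C^1(\rr, \LL^2(\Gamma))$ both solve \eqref{eq.tree} with the same initial datum, the difference $\bw := \bu - \widetilde{\bu}$ satisfies $\bw(0)=0$ and
\[
\frac{\mathrm{d}}{\mathrm{d}t} \|\bw(t)\|^2_{\LL^2(\Gamma)} = 2\,\mathrm{Re}\,(\bw_t, \bw)_{\LL^2(\Gamma)} = 2\,\mathrm{Re}\,(\mathrm{i}\Delta_\Gamma \bw, \bw)_{\LL^2(\Gamma)} = 0,
\]
because $(\Delta_\Gamma \bw, \bw)_{\LL^2(\Gamma)} \in \rr$ by self-adjointness; hence $\bw \equiv 0$. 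Uniqueness at the $\LL^2$ level is then obtained by density of $D(\Delta_\Gamma)$ together with the boundedness of each $S(t)$ on $\LL^2(\Gamma)$. The one genuinely nontrivial step of this scheme is the self-adjointness of the Kirchhoff Laplacian $\Delta_\Gamma$, which is delicate because of the coupling conditions at the interior vertices; this is, however, classical for finite connected metric graphs and has been invoked in the paper via \cite{MR1476363, MR2277618}, so the reduction to Stone's (equivalently Hille--Yosida's) theorem afterwards is routine.
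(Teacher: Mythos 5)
Your proposal is correct and follows essentially the same route as the paper, which simply invokes the self-adjointness and dissipativity of $(\Delta_\Gamma, D(\Delta_\Gamma))$ together with the Hille--Yosida theorem; your use of Stone's theorem is just the unitary-group specialization of that argument, and your added details (edge-by-edge integration by parts with cancellation of the vertex terms, the energy identity for uniqueness, density for the $\LL^2$ case) are the standard content behind the paper's one-line citation.
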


\section{Schr\"odinger equation with Kirchhoff coupling conditions}\label{hup-trees}
\subsection{The star-shaped tree}
Let us give first a proof of  Theorem \ref{free-sch} in the particular case of a star-shaped tree with $N$ edges, in anticipation of the strategy that one can develop in the case of general regular trees.
For any $\bu_0=(u_{0k})_{k=0}^N\in D(\Delta_\Gamma)$ system \eqref{eq.tree} can be written in an explicit way as follows: $u_k\in C(\R,H^2(0,\infty))\cap C^1(\R,L^2(0,\infty))$,  $k\in\{1,\dots,N\},$
\begin{equation}\label{system1exp}
	\begin{cases}
	%u_k\in C(\R,H^2(0,\infty))\cap C^1(\R,L^2(0,\infty)),& k\in\{1,\dots,N\}, \\
		i\partial_t u_k+\partial_{xx}u_k=0,& t\neq 0,x>0, k\in \{1,\dots,N\},\\
		u_k(t,0)=u_j(t,0), & k,j\in\{1,\dots,N\},\\
		\sum_{k=1}^n \partial_x u_k(t,0)=0, & t\neq 0.\\
	%	u_k(0,x)=u_{0k}(x), &x>0, i\in \{1,\dots,N\}.
	\end{cases}
\end{equation}
	We can consider the case $\alpha=\beta$, the other case can be reduced to this one by using the so called Appell transformation (see Section \ref{sec-Appell transform} for a precise definition). 
%	More precisely, if 
%	\begin{equation*}
%	u_k(0,x) = O(\mathrm{e}^{-\alpha x^2}), \quad u_k(1,x)= O(\mathrm{e}^{-\beta x^2}), \quad k=\overline{1,N}, \ x\rightarrow\infty,
%	\end{equation*}
%for some positive $\alpha, \beta$ such that $\alpha \beta > 1/16$, 
%\begin{equation*}
%\widetilde{u}_k(t,x):=\Big( \dfrac{{(\alpha \beta)}^{1/4}}{\sqrt{\alpha}(1-t)+\sqrt{\beta} t} \Big)^{1/2} u_k\Big( \dfrac{\sqrt{\beta} t}{\sqrt{\alpha}(1-t)+\sqrt{\beta} t}, \dfrac{{(\alpha \beta)}^{1/4} x }{\sqrt{\alpha}(1-t)+\sqrt{\beta} t}\Big) \mathrm{e}^{\frac{(\sqrt{\alpha}-\sqrt{\beta})x^2}{4 \mathrm{i} [\sqrt{\alpha}(1-t)+\sqrt{\beta} t]}}, \quad k=\overline{1,N}
%\end{equation*}
%satisfies the same system of equations and
%	\begin{equation*}
%	\widetilde{u}_k(0,x) = O(\mathrm{e}^{-\sqrt{\alpha \beta} x^2}), \quad \widetilde{u}_k(1,x) = O(\mathrm{e}^{-\sqrt{\alpha \beta} x^2}), \quad k=\overline{1,N}, \ x\rightarrow\infty .
%	\end{equation*}
	 Denote by $S$ the sum of all the components of $\bu$:
	\[
	S(t,x)=\sum _{k=1}^N u_k(t,x).
	\]
	It follows that $S$ satisfies the Schr\"odinger equation on the half-line with Neumann boundary condition at $x=0$, $S_x(t,0)=0$.
%	\[
%		\begin{cases}
%		\mathrm{i} S_t+S_{xx}=0,& t\neq 0, x>0,\\
%		S_x(t,0)=0, & t\neq 0,\\
%		S(0,x)=\displaystyle\sum _{k=0}^N u_{0k}(x),& x>0.
%	\end{cases}
%	\]
	Moreover, $S$ satisfies $	|S(0,x)|+|S(1,x)|  \lesssim \mathrm{e}^{-\alpha x^2}$.
%	\[
%	|S(0,x)|+|S(1,x)|\leq \sum _{k=1}^N |u_k(0,x)|+|u_k(1,x)|\lesssim \mathrm{e}^{-\alpha x^2}
%	\]
	Denoting by $\widetilde S$ the even  extension of $S$ we obtain that it satisfies the Schr\"odinger equation on the whole line
	\[
	i \widetilde S_t +\widetilde S_{xx}=0,\ x\in \rr, \ t \neq 0. 
	\] 
Using the classical result on the real line we conclude that $\widetilde S\equiv 0$ so $S\equiv 0$. Going back to $u_k$, $k=1,\dots, N$ we obtain that each component satisfies the Schr\"odinger equation on the half line with Dirichlet boundary condition at $x=0$, $u_k(t,0)=0$.
%\[
%\begin{cases}
%	\mathrm{i} u_{k,t}+u_{k,xx}=0,& t\neq 0, x>0,\\
%		u_k(t,0)=0, & t\neq0,\\
%		u_k(0,x)=u_{0k}(x),& x>0.
%\end{cases}
%\]
Making an odd extension $\widetilde u_k$, one obtains a solution of the linear Schr\"odinger equation on the whole line that decays as follows
$
|\widetilde u_k(1,x)|+|\widetilde u_k(0,x)|\lesssim \mathrm{e}^{-\alpha x^2}.
$
Then $\widetilde u_k\equiv 0$, so $u_k\equiv 0$.

\begin{remark}
This assumption $\alpha\beta>1/16$ is sharp. In the case of the star shaped tree the solution $(u_k)_{k=\overline{1,N}}$ of system \eqref{eq.tree} can be computed explicitly (see \cite{MR2804557} for $N=3$) 
\[
u_k(t,x)=\int_0^\infty k_t(x-y)u_{0k}(y)\d y +\int_0^\infty k_t(x+y)\Big(\frac 2N \sum _{j=1}^N u_{0j}-u_{0k} \Big)(y)\d y,
\]
where $k_t(x)= \frac{1}{\sqrt{4 \pi \mathrm{i} t}} \mathrm{e}^{{\mathrm{i} \frac{x^2}{4 t}}}$.
%\[
%k_t(x)= \frac{1}{\sqrt{4 \pi \mathrm{i} t}} \mathrm{e}^{\mathrm{i} \frac{x^2}{4 t}}.
%\]
When $\alpha\beta=1/16$, we consider as initial data
\[
u_{0,k}(x)=\mathrm{e}^{-\alpha x^2-\frac{ix^2}4}, \ k=1,\dots, N.
\]	
Using the fact that all $u_{0,k}$ are equal and the invariance of $\mathrm{e}^{-x^2}$ w.r.t. the Fourier transform, we obtain that for all $1\leq k\leq N$
\[
u_k(1,x)=(k_1\ast u_{0,k})(x)=\frac{1}{\sqrt{2\alpha}}\mathrm{e}^{-\frac{i|x|^2}{4}}\mathrm{e}^{-\frac{|x|^2}{16\alpha}}.
\]

%\[
%u_j(t)=\color{black}{\dfrac{1}{\sqrt{4 \pi \mathrm{i} t}} \int_0^{\infty} \mathrm{e}^{\frac{\mathrm{i} (x-y)^2}{4 t}}  \bu_{0,j}(y) dy + \dfrac{1}{\sqrt{4 \pi \mathrm{i} t}} \sum_{k=1}^{3} \int_0^{\infty} \mathrm{e}^{\frac{\mathrm{i} (x+y)^2}{4 t}} M_{j,k} \bu_{0,k}(y) dy }, \quad j=\overline{1,3},
%\]
%\textcolor{black}{where 
%\[
%M=\dfrac{1}{3}
%\begin{pmatrix}
%    -1       & 2 & 2 \\
%    2       & -1 & 2 \\
%    2       & 2 & -1
%\end{pmatrix}.
%\]}
%This solution satisfies
%\[
%|\bu(1,x)|\lesssim e^{-\frac {x^2}{16\alpha}}, \quad x\in [0,\infty).
%\]
%In the real line we have
%\[
%u(t,x)=e^{\frac{i|x|^2}{4t}} \Big(e^{\frac{i|\cdot|^2}{4t}}u_0\Big)\, \widehat{}\,(\frac {x}{2t})
%\]
%so for the above particular initial data (check the exponents, in linares at p=60 there is a better function, or at least more clear for me)
%\[
%u(1,x)=e^{\frac{i|x|^2}{4}}  (e^{-\alpha x^2} )\, \widehat{}\,(\frac {x}{2})=e^{\frac{i|x|^2}{4}} e^{-x^2}
\end{remark}

\subsection{Piecewise constant coeficients and LSE on regular trees}
We will show how one can apply Theorem \ref{piecewise theorem}
%of Hardy uncertainty principle for the LSE on $\mathbb{R}$ with piece-wise constant coefficient 
in order to obtain the same principle in the case of regular trees with Kirchhoff coupling condition. In order to give a clear and detailed proof, we borrow the notations from \cite{ignatsiam} and recall some of the needed key results. Also, for simplicity, we restrict ourselves to a particular regular tree and we explain the changes that appear in the case of a general regular tree after the proof.

\begin{proof}[Proof of Theorem \ref{free-sch}]

Following \cite{ignatsiam}, we consider the regular tree as in Figure 1, with each internal vertex having other two children nodes, the edges of the same generation have the same length, with the last generation being edges of infinite length.
\begin{center} 
\includegraphics[scale=0.1]{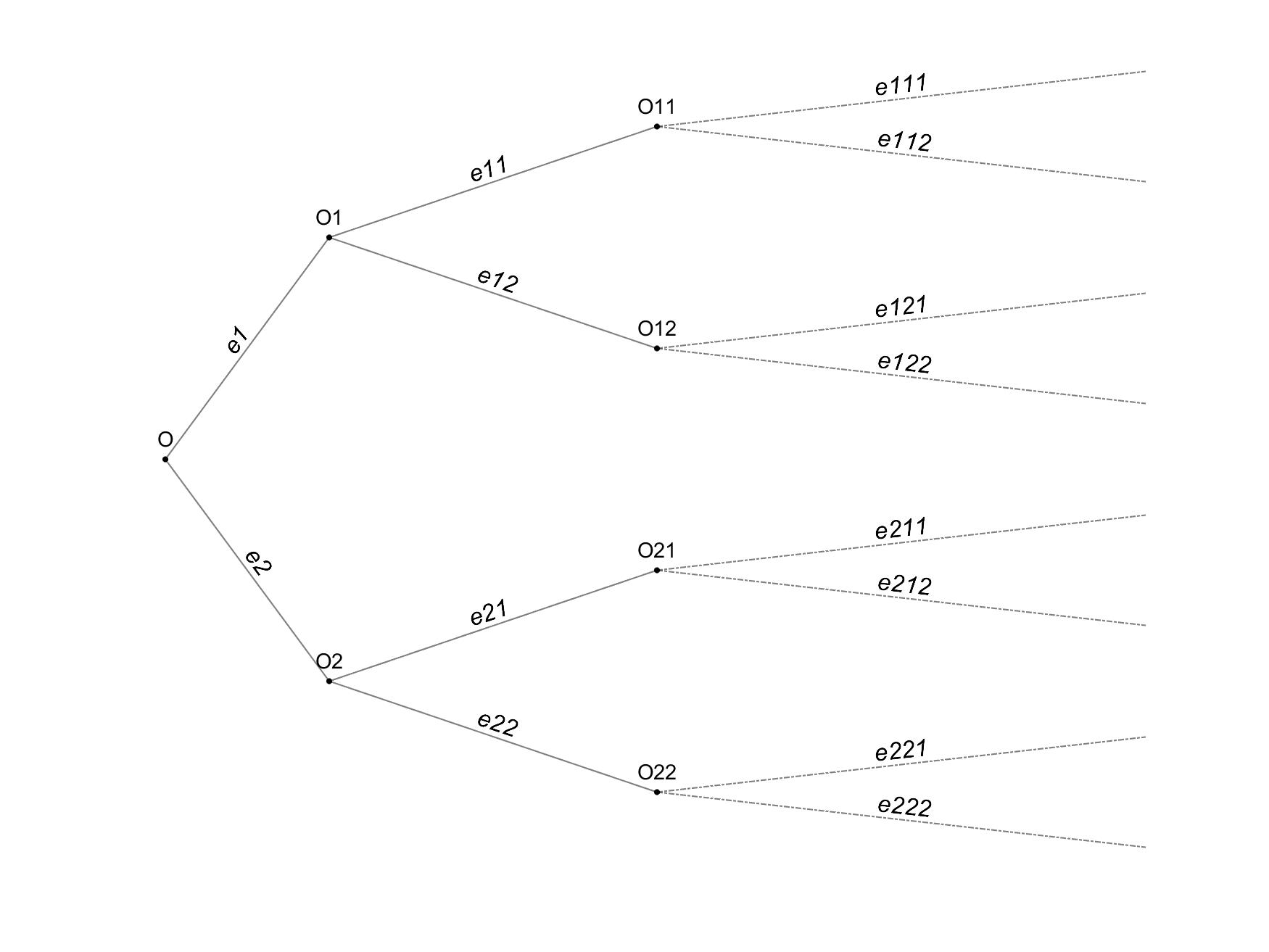}\\
{\textbf{Figure 1.}} \textit{Regular tree with $n+1=3$ generations of edges, $2$ descendants from each vertex.}
\end{center}
Let us assume we have $n$ generations of vertices and, correspondingly, $n+1$ generations of edges, and present their indexing. Consider indices of type $\bar{\alpha}=(\alpha_1,\alpha_2, \dots, \alpha_k) \in \{1,2 \}^k$ and $|\bar{\alpha}|=k$ the number of its components. Denote by $O$ the root of the tree, by $O_{\bar{\alpha}}$ and $e_{\bar{\alpha}}$ the remaining vertices and edges, respectively. From each vertex $O_{\bar{\alpha}}$ with $|\bar{\alpha}| \leq n$ there are two edges that branch out: $e_{\overline{\alpha \beta}}$, with $ \overline{\alpha \beta} = (\alpha_1,\alpha_2, \dots, \alpha_k,\beta), \beta \in \{1,2 \}$. In the case when $|\bar{\alpha}| \leq n-1$, the endpoint of $e_{\overline{\alpha \beta}}$ is $O_{\overline{\alpha \beta}}$, otherwise, i.e. if $|\bar{\alpha}| = n$, the two edges that branch out are infinite strips.

Having these new notations in mind, a function $\bu:\Gamma \to \mathbb{C}$ is a collection of functions $\{ u_{\bar{\alpha}} \}_{\bar{\alpha}}$ defined on each edge, $u_{\bar{\alpha}} : e_{\bar{\alpha}} \to \mathbb{C}$, each edge being identified with the real sub-interval $[0,\l_{|\bar{\alpha}|})$, with $l_{\bar{\alpha}}$ the length of $e_{\bar{\alpha}}$ if $|\bar{\alpha}| \leq n-1$, and $[0, \infty)$ if $|\bar{\alpha}|=n$. 
%
%\begin{equation}\label{eq3.6siam}
%	\begin{cases}
%		\mathrm{i} u^{\bar{\alpha}}_t(t,x) +u^{\bar{\alpha}}_{xx}(t,x)=0, & t \neq 0, x \in (0,l_{|\bar{\alpha}|}), 1 \leq |\bar{\alpha}| \leq n, \\[10pt]
%	\mathrm{i} u^{\bar{\alpha}}_t(t,x) +u^{\bar{\alpha}}_{xx}(t,x)=0, & t \neq 0, x \in (0,\infty), |\bar{\alpha}|=n+1, \\[10pt]
%	u^{\bar{\alpha}}(t,l_{|\bar{\alpha}|})=u^{\overline{\alpha \beta}}(t,0), & \beta \in \{ 1,2 \}, 1 \leq |\bar{\alpha}| \leq n,\\[10pt]
%	u^1(t,0)=u^2(t,0), \\[10pt]
%	u_x^{\bar{\alpha}}(t,l_{|\bar{\alpha}|})=\sum_{\beta=1}^{2}u_x^{\overline{\alpha \beta}}(t,0), & 1 \leq |\bar{\alpha}| \leq n,\\[10pt]
%u^1_x(t,0)+u^2_x(t,0)=0,\\[10pt]
%	u^{\bar{\alpha}}(0,x)=u^{\bar{\alpha}}_0(x).
%	\end{cases}
%\end{equation}
 Denoting by
\begin{equation*}
I_k=\left\{
\begin{array}{lll}
(a_{k-1},a_k),& \text{if } 1 \leq k \leq n, \\[10pt]
(a_n,\infty),& \text{if } k=n+1, 
\end{array}
\right.
\end{equation*}
 with $a_0=0, a_{k+1}=a_k + l_{k+1}, k=\overline{0,n-1}$, $a_{n+1}=\infty$, system \eqref{system1exp} is equivalent (after a space translation) with
\begin{equation}\label{eq3.7siam}
	\begin{cases}
		\mathrm{i} u^{\bar{\alpha}}_t(t,x) +u^{\bar{\alpha}}_{xx}(t,x)=0, & t \neq 0, x \in I_{|\bar{\alpha}|}, 1 \leq |\bar{\alpha}| \leq n+1, \\[10pt]
	u^{\bar{\alpha}}(t,a_{|\bar{\alpha}|})=u^{\overline{\alpha \beta}}(t,a_{|\bar{\alpha}|}), & \beta \in \{ 1,2 \}, 1 \leq |\bar{\alpha}| \leq n,\\[10pt]
	u^1(t,0)=u^2(t,0), \\[10pt]
	u_x^{\bar{\alpha}}(t,a_{|\bar{\alpha}|})=\sum_{\beta=1}^{2}u_x^{\overline{\alpha \beta}}(t,a_{|\bar{\alpha}|}), & 1 \leq |\bar{\alpha}| \leq n,\\[10pt]
u^1_x(t,0)+u^2_x(t,0)=0,\\[10pt]
	u^{\bar{\alpha}}(0,x)=u^{\bar{\alpha}}_0(x).
	\end{cases}
\end{equation}
 For every $\bar{\alpha}$ with $1 \leq |\bar{\alpha}| \leq n+1$, consider the averaged sum functions 
\begin{equation*}
Z^{\bar{\alpha}} : J_{\bar{\alpha}}:= \bigcup_{j=0}^{n+1-|\bar{\alpha}|} I_{|\bar{\alpha}| + j} \to \mathbb{C}
\end{equation*}
 as
\begin{equation*}
Z^{\bar{\alpha}}(t,x) = \dfrac{\sum_{|\bar{\gamma}|=j} u^{\overline{\alpha \gamma}}(t,x)}{2^{j}}, \quad x \in I_{|\bar{\alpha}|+j}, j=\overline{0,n+1-|\bar{\alpha}|}.
\end{equation*}
% with domains of definition as in Figure 2.
%\begin{center} 
%\includegraphics[scale=0.6]{domainZalpha}\\
%{\textbf{Figure 2.}} \textit{$Z^{\bar{\alpha}}$'s domain of definition.}
%\end{center}
Note that
\begin{equation}\label{zcoincideu}
Z^{\bar{\alpha}}(\cdot,x)=u^{\bar{\alpha}}(\cdot,x), \quad x \in I_{|\bar{\alpha}|}.
\end{equation}
 Consider now 
\begin{equation*}
Z(t,x)=\dfrac{Z^1(t,x) + Z^2(t,x)}{2}, \quad t \in \mathbb{R}, x \in (0,\infty),
\end{equation*}
 which satisfies ${Z}_x(t,0)=0$, $t \neq 0$, ${Z}(t,a_k-)={Z}(t,a_k+)$ and ${Z}_x(t,a_k-)=2 {Z}_x(t,a_k+)$ for all $ 1 \leq k \leq n$.
% the system
%\begin{equation} \label{eq3.17siam}
%	\begin{cases}
%		\mathrm{i} {Z}_t(t,x) +{Z}_{xx}(t,x)=0, & t \neq 0, x \in (0, \infty) - \{ a_k, 1 \leq k \leq n \}, \\[10pt]
%		{Z}_x(t,0)=0, & t \neq 0,\\[10pt]
%		{Z}(t,a_k-)={Z}(t,a_k+), & 1 \leq k \leq n,\\[10pt]
%		{Z}_x(t,a_k-)=2 {Z}_x(t,a_k+), & 1 \leq k \leq n,\\[10pt]
%		{Z}(0,x)=\dfrac{Z^2(0,x)+Z^1(0,x)}{2}, &x \in (0, \infty) - \{ a_k, 1 \leq k \leq n \}.
%	\end{cases}
%\end{equation}
% The even extension of the function $Z$,  to the whole real line.
%It satisfies the system 
%\begin{equation}\label{Zeven}
%	\begin{cases}
%		\mathrm{i} {Z}^{even}_t(t,x) +{Z}^{even}_{xx}(t,x)=0, & t \neq 0, x \in \mathbb{R} - \{ \pm a_k, 1 \leq k \leq n \}, \\[10pt]
%		{Z}^{even}(t,\pm a_k-)={Z}^{even}(t,\pm a_k+), & 1 \leq k \leq n,\\[10pt]
%		{Z}^{even}_x(t, - a_k-)=\frac 1{2} {Z}^{even}_x(t,- a_k+), & 1 \leq k \leq n,\\[10pt]
%		{Z}^{even}_x(t, a_0)=0,\\[10pt]
%				{Z}^{even}_x(t, a_k-)=2 {Z}^{even}_x(t,a_k+), & 1 \leq k \leq n,\\[10pt]
%		{Z}^{even}(0,x)=Z^{even}_0(x), &x \in \mathbb{R} - \{ \pm a_k, 1 \leq k \leq n \}.
%	\end{cases}
%\end{equation}
Let us introduce the sequence $(\tilde a_k)_{k=0}^{2n+2}$ defined by
%where $\widetilde{a}_0 = - \infty, \widetilde{a}_{2n+2}=+\infty$ and
\[
\tilde{a}_k=\left\{
\begin{array}{lll}
-a_{n+1-k},& \text{if } 0 \leq k \leq n, \\ 
a_{k-(n+1)},& \text{if } n+1 \leq k \leq 2n+2.
\end{array}
\right.
\]
It follows that  $v(t,x)$,
 the even extension of the function $Z$,  satisfies
\begin{equation}\label{vZevantranslated}
	\begin{cases}
		\mathrm{i} v_t(t,x) +v_{xx}(t,x)=0, & t \neq 0, x \in (\widetilde{a}_k,\widetilde{a}_{k+1}) 1 \leq k \leq 2n, \\[10pt]
		v(t,\widetilde{a}_k-)=v(t,\widetilde{a}_k+), & 1 \leq k \leq 2n+1,\\[10pt]
		v_x(t,\widetilde{a}_k-)=\frac 12 v_x(t,\widetilde{a}_k+), & 1 \leq k \leq n,\\[10pt]
		v_x(t,\widetilde{a}_{n+1}-)=0=v_x(t,\widetilde{a}_{n+1}+), \\[10pt]
		v_x(t,\widetilde{a}_k-)= {2} v_x(t,\widetilde{a}_k+), & n+2 \leq k \leq 2n+1,\\[10pt]
		v(0,x)=v_0(x), &x \in x \in (\widetilde{a}_k,\widetilde{a}_{k+1}) 1 \leq k \leq 2n.
	\end{cases}
\end{equation}

We  consider
\begin{equation*}
w(t,T_k(x))=v(t,x), \quad t\in \mathbb{R}, x \in (\widetilde{a}_k,\widetilde{a}_{k+1}), 0 \leq k \leq 2n+1,
\end{equation*}
where each $T_k:(\widetilde{a}_k,\widetilde{a}_{k+1})\to (b_k,b_{k+1})$, $0 \leq k \leq 2n+1$ is a one-to-one linear map that satisfies $(T_k)_x=\mu_k$. The idea behind this linear transformation is that as long as $v_x(\tilde a_k-)=\eta_k v_x(\tilde a_k+)$ we can construct 
  a piecewise constant coefficient $\sigma$ such that $\sigma(x)=\mu_k^2$ on $(b_k,b_{k+1})$ with $\mu_{k-1}=\mu_k/\eta_k$ and $w$ to satisfy
\begin{equation}
\label{eq.w}
\left\{
\begin{array}{lll}
\mathrm{i} w _t(t,x)+ \partial_x (\sigma \partial_x w))(t,x)=0,& x\in \mathbb{R}, t\neq 0 ,\\
w(0,x)=w_0(x),&   x\in \mathbb{R}.
\end{array}
\right.
\end{equation} 
The particular structure of \eqref{vZevantranslated} where the first half of $\eta$'s are equal with $1/2$ and the second half are equal $2$ allow us 
to consider  $T_k$'s such that $(T_k)_x=2^{|n+1/2-k|-(n+1/2)}$ and 
\[
\sigma(x)= 2^{|2n+1-2k|-(2n+1)}, x\in (b_k,b_{k+1}), \ 0\leq k\leq 2n+1. 
\]

%====
%
%after you check the details we can erase the explicit form (for the thesis you can write how much you want)
%
%=====
%
%
%$$\left\{
%\begin{array}{lll}
%T_k:(\widetilde{a}_k,\widetilde{a}_{k+1})\to (b_k,b_{k+1}),& \text{if } 0 \leq k \leq 2n+1, \\[10pt]
%T_k(\widetilde{a}_k)=b_k, T_k(\widetilde{a}_{k+1})=b_{k+1},& \text{if } 0 \leq k \leq 2n+1, \\[10pt]
%(T_k)_x(x)=
%{2^{-k }},& \text{for } x \in (\widetilde{a}_k,\widetilde{a}_{k+1}), & 0 \leq k \leq n-1, \\[10pt]
%(T_k)_x(x)=
%{2^{-n}},& \text{for } x \in (\widetilde{a}_k,\widetilde{a}_{k+1}), & n \leq k \leq n+1, \\[10pt]
%(T_k)_x(x)=
%{2^{k-2n-1}},& \text{for } x \in (\widetilde{a}_k,\widetilde{a}_{k+1}), & n+2 \leq k \leq 2n+1,
%\end{array}
%\right.
%$$
%
% and choosing $b_0=\widetilde{a}_0=-\infty$ and $b_{2n+2}=\widetilde{a}_{2n+2}=\infty$. Then, it satisfies the following equation with piece-wise constant coefficient $\sigma:$
%\begin{equation*}
%\left\{
%\begin{array}{lll}
%\mathrm{i} w _t(t,x)+ \partial_x (\sigma \partial_x w))(t,x)=0,& x\in \mathbb{R}, t\neq 0 ,\\[10pt]
%w(0,x)=w_0(x),&   x\in \mathbb{R},
%\end{array}
%\right.
%\end{equation*} 
% where 
%\begin{equation*}
%\sigma(x)=
%\begin{cases}
%{(2^{k+1})^2}, & x\in (b_k,b_{k+1})$, $0 \leq k \leq n-1,\\
%{(2^{n+1})^2}, & x\in (b_k,b_{k+1})$, $n \leq k \leq n+1,\\
%(\dfrac{2^{n+1}}{2^{k-(n+1)}})^{2}, & x\in (b_k,b_{k+1})$, $n+2 \leq k \leq 2n+1,\\
%\end{cases}
%\end{equation*}
Recall that $\bu(0,x)={O}(\mathrm{e}^{-\alpha x^2})$ and $\bu(1,x)={O}(\mathrm{e}^{-\beta x^2})$ which implies that
\begin{equation*}
u^{\bar{\alpha}}(0,x)={O}(\mathrm{e}^{-\alpha x^2}) \ \text{and}\
u^{\bar{\alpha}}(1,x)={O}(\mathrm{e}^{-\beta x^2}), \, \forall \ |\bar{\alpha}|\leq n+1,
\end{equation*}
which in view of the previous arguments, one gets that
\begin{equation*}
w(0,x)={O}(\mathrm{e}^{-\alpha  {x^2} })
\ \text{and}\ w(1,x)={O}(\mathrm{e}^{-\beta  {x^2} }).
\end{equation*} 
Thus, since from the definition of $\sigma$ we get that $\sigma_{\pm}=1$, by Theorem \ref{piecewise}, $w(t,x)=0$, for $x \in  \mathbb{R}$ and $t \in [0,1]$. This implies that 
\begin{equation} \label{zequal0}
Z(t,x)=\dfrac{Z^1(t,x)+Z^2(t,x)}{2}=0, \quad x \in [0, \infty), \quad t \in [0,1].
\end{equation}

 We would like to conclude that $Z^1$ and $Z^2$ vanish for $x \in [0,\infty)$ and $t \in [0,1]$, and thus, by \eqref{zcoincideu}, $u^1$ and $u^2$ vanish for $x \in I_1$ and $t \in [0,1]$. Consider, as in \cite{ignatsiam}, the difference functions
\begin{equation}\label{ztilde}
\widetilde{Z}^1=Z-Z^1 \text{ and } \widetilde{Z}^2=Z-Z^2.
\end{equation}
 Since $\widetilde{Z}^1(t,0)=0$,
%\begin{equation}\label{eq3.18siam}
%	\begin{cases}
%		\mathrm{i} \widetilde{Z}_t^{1}(t,x) +\widetilde{Z}_{xx}^{1}(t,x)=0, & t \neq 0, x \in (0, \infty) - \{ a_k, 1 \leq k \leq n \}, \\[10pt]
%		\widetilde{Z}^{1}(t,0)=0, & t \neq 0,\\[10pt]
%		\widetilde{Z}^{1}(t,a_k-)=\widetilde{Z}^{1}(t,a_k+), & 1 \leq k \leq n,\\[10pt]
%		\widetilde{Z}_x^{1}(t,a_k-)=\dfrac{1}{2} \widetilde{Z}_x^{1}(t,a_k+), & 1 \leq k \leq n,\\[10pt]
%		\widetilde{Z}^{1}(0,x)=\dfrac{Z^2(0,x)-Z^1(0,x)}{2}, &x \in (0, \infty) - \{ a_k, 1 \leq k \leq n \}.
%	\end{cases}
%\end{equation}
making an odd extension $\widetilde{Z}^{1,odd}$ to the whole real line, it satisfies an equation similar to \eqref{vZevantranslated} except the fact that $v_x(t,\widetilde{a}_{n+1}-) =v_x(t,\widetilde{a}_{n+1}+)$ not necessarily vanishes and  
the initial data is  $\widetilde{Z}^{1,odd}(0,x)$. 
Since $\widetilde{Z}^{1,odd}(0,x)$ and $\widetilde{Z}^{1,odd}(1,x)$ are again of order ${O}(\mathrm{e}^{-\alpha x^2})$ and ${O}(\mathrm{e}^{-\beta x^2})$, respectively, repeating the previous steps, one arrives finally to the conclusion that $\widetilde{Z}^{1}$ vanishes for all $x \in [0,\infty)$ and $t\in [0,1]$. Together with \eqref{zequal0} and \eqref{ztilde}, we get that
$
Z^1(t,x)=0$, for $x \in [0,\infty)$ and  $t \in [0,1].$
 Similarly, one gets that
$
Z^2(t,x)=0,$ for  $x \in [0,\infty)$ and  $t \in [0,1].
$
 Thus, 
\begin{equation*}
u^1(t,x)=0 \text{ and } u^2(t,x)=0, \quad x \in I_1, \quad t \in [0,1].
\end{equation*}

 The vanishing property for the other components $u^{\bar{\alpha}}$, $1<|\bar{\alpha}| \leq n+1$, follows by induction. More precisely, 
assume that $Z^{\bar{\alpha}}$ vanishes for $x \in J_{\bar{\alpha}}$ and $t \in [0,1]$, for some $|\bar{\alpha}|=k$, and consider the difference functions
\begin{equation*}
\widetilde{Z}^{\overline{\alpha \beta}}(t,x)=Z^{\overline{\alpha \beta}}(t,x) - Z^{\bar{\alpha}}(t,x), \quad x \in J_{k+1}.
\end{equation*}
It follows (see for example \cite{ignatsiam}) that for $k \leq n-1$, $\widetilde{Z}^{\overline{\alpha \beta}}$ satisfies
\begin{equation}\label{eq3.19siam}
	\begin{cases}
		\mathrm{i} \widetilde{Z}_t^{\overline{\alpha \beta}}(t,x) +\widetilde{Z}_{xx}^{\overline{\alpha \beta}}(t,x)=0, & t \neq 0, x \in \bigcup_{m=k+1}^{n+1} I_m, \\[10pt]
 \widetilde{Z}^{\overline{\alpha \beta}}(t,a_k)=0, & t \neq 0,\\[10pt]
		\widetilde{Z}^{\overline{\alpha \beta}}(t,a_m-)=\widetilde{Z}^{\overline{\alpha \beta}}(t,a_m+), & k+1 \leq m \leq n,\\[10pt]
		\widetilde{Z}_x^{\overline{\alpha \beta}}(t,a_m-)=2\widetilde{Z}_x^{\overline{\alpha \beta}}(t,a_m+), & k+1 \leq m \leq n,\\[10pt]
		\widetilde{Z}^{\overline{\alpha \beta}}(0,x)=\widetilde{Z}^{\overline{\alpha \beta}}_0(x), &x \in \bigcup_{m=k+1}^{n+1} I_m,
	\end{cases}
\end{equation}
 and if $k=n$
\begin{equation}\label{eq3.20siam}
\begin{cases}
		\mathrm{i} \widetilde{Z}_t^{\overline{\alpha \beta}}(t,x) +\widetilde{Z}_{xx}^{\overline{\alpha \beta}}(t,x)=0, & t \neq 0, x \in I_{n+1}, \\[10pt]
 \widetilde{Z}^{\overline{\alpha \beta}}(t,a_k)=0, & t \neq 0,\\[10pt]
		\widetilde{Z}^{\overline{\alpha \beta}}(0,x)=\widetilde{Z}^{\overline{\alpha \beta}}_0(x), &x \in I_{n+1}.
	\end{cases}
\end{equation}
If $k \leq n-1$, after a translation to move the point $x=a_k$ to the origin $x=0$, 
proceeding similarly as in the case of $\widetilde{Z}^1$, one finally gets that
\begin{equation*}
Z^{\overline{\alpha \beta}}(t,x)=0, \quad x \in J_{k+1}, \quad t \in [0,1],
\end{equation*}
 and thus,
\begin{equation*}
u^{\overline{\alpha \beta}}(t,x)=0, \quad x \in I_{k+1}, \quad t \in [0,1].
\end{equation*} 
 If $k=n$, making an odd extension of $\widetilde{Z}^{\overline{\alpha \beta}}$ to the whole real line, since $\widetilde{Z}^{\overline{\alpha \beta}}(0,x) = {O}(\mathrm{e}^{-\alpha x^2})$ and $\widetilde{Z}^{\overline{\alpha \beta}}(1,x)={O}(\mathrm{e}^{-\beta x^2})$, by the classical Hardy uncertainty principle for the LSE on $\mathbb{R}$ follows the desired result.
\end{proof}

 \textbf{Extension to general regular trees.} In the proof of Theorem \ref{free-sch}, the regular tree was assumed such that all vertices have two descendants. Let us review the modifications that appear in the case of a regular tree with all the vertices from the $0 \leq k \leq n$ generation having $d
_{k+1}$ descendants (edges). In this case, the indexing is of type $\bar{\alpha}=(\alpha
_1,\alpha_2, \dots, \alpha_k) \in \{1, 2, \dots, \d_1 \} \times  \{1, 2, \dots, \d_2 \} \times \cdots  \times \{1, 2, \dots, \d_k \} $. From each vertex $O_{\bar{\alpha}}$ with $|\bar{\alpha}|=k \leq n$ there are $d_{k+1}$ edges that branch out, $e_{\overline{\alpha \beta_{k+1}}}$, with $\overline{\alpha \beta_{k+1}} = (\alpha_1, \alpha_2, \dots, \alpha_k, \beta_{k+1})$, with $\beta_{k+1} \in \{1, 2, \dots, \d_{k+1} \}$, having endpoints $O_{\alpha \beta_{k+1}}$, and if $|\bar{\alpha}| = n$, the $d_{n+1}$ that branch out from the vertices of the last generation, are infinite strips. In this view, the function $\{ u_{\bar{\alpha}} \}_{\bar{\alpha}}$ modifies accordingly. Furthermore, in equation \eqref{eq3.7siam}, for each $|\bar{\alpha}| =k $, $\beta$ replaces with $\beta_{k+1}$ and the sums are indexed by $\beta_{k+1}=\overline{1,d_{k+1}}$.

The averaged sum functions become, for each $|\bar{\alpha}|=k$,
\begin{equation*}
Z^{\bar{\alpha}}(t,x) = \dfrac{\sum_{|\bar{\gamma}|=j} u^{\overline{\alpha \gamma}}(t,x)}{d_{|\bar{\alpha}|+1},\cdot \dots \cdot d_{|\bar{\alpha}|+k}}, \quad x \in I_{|\bar{\alpha}|+j}, j=\overline{0,n+1-|\bar{\alpha}|}.
\end{equation*}
The new function $Z$ is 
\begin{equation*}
Z(t,x)=\dfrac{Z^1(t,x) + Z^2(t,x)+ \dots + Z^{d_1}(t,x)}{d_1}, \quad t \in \mathbb{R}, x \in (0,\infty),
\end{equation*}
and its even extension satisfies system \eqref{vZevantranslated} with initial data modified accordingly and with $ d_{k+1}$ instead of $ 2$. This latter modification is carried out then throughout the entire proof. In particular, one finally arrives to the step function 
\begin{equation*}
\sigma(x)=
\begin{cases}
\bigg(\dfrac{ d_{2} \cdots d_{n+1-k} }{ d_2   \cdots d_{n+1}} \bigg)^2, & x\in (b_k,b_{k+1}),  0 \leq k \leq n-1, \\
(  d_2 \cdots  d_{n+1})^{-2}, & x\in (b_k,b_{k+1}),  n \leq k \leq n+1, \\
\bigg(\dfrac{ d_{2} \ldots d_{k-n} }{ d_2 \cdots d_{n+1}} \bigg)^2, & x\in (b_k,b_{k+1}),  n+2 \leq k \leq 2n+1,
\end{cases}
\end{equation*}
Taking then
$
\widetilde{Z}^j=Z-Z^j, \quad j=\overline{1,d_1},
$
 the proof follows similarly, keeping these changes accordingly. Again, in this case $\sigma_{\pm}=1$.

\section{Proof of Theorem \ref{piecewise}}\label{piecewise-section}

Let $N \geq 2$, and consider a partition of the real line $l_0 = -\infty <l_1 <l_2 < \cdots < l
_{N-1} < l_N =\infty$, and on each interval $I_i=(l_{i-1},l_i)$ assume that $\sigma(x)=a_i^{-2}$ is constant, with $a_i>0$, for all $i=\overline{1,N}$. Since one can always reduce to the case $l_1=0$ and assume that $l_j=(j-1)l$ by refining the partition of the $l_j$'s, from now on we will consider this special partition of $\mathbb{R}$. This special choice will be used in this section.
%More precisely, the piecewise constant function $\sigma : \mathbb{R} \to \mathbb{R}$ is defined as
%\begin{equation}\label{sigma_piecewise}
% \sigma(x)=
% \left\{
%  \begin{aligned}
%  	a_1^{-2}, & \quad x \in I_1:=(-\infty,0],\\
%  	a_2^{-2}, & \quad x \in I_2:=(0,l),\\
%  	a_3^{-2}, & \quad x \in I_3:=(l,2 l),\\
%  	\vdots \\
%  	a_N^{-2}, & \quad x \in I_N:=((N-2)l,+\infty),\\
%  \end{aligned}
%\right.
%\end{equation}
%with $a_1, \ldots, a_N>0$. 

Let $u_0$ be as in Theorem \ref{piecewise theorem}. Then it belongs to $L^2(\mathbb{R})$. Since the operator $L:\mathcal{D}(L) \subseteq L^2(\mathbb{R}) \to L^2(\mathbb{R})$, acting as $Lu=\partial_x (\sigma \partial_x u)$ is self-adjoint, it generates a unitary group, i.e. there exists a unique mild solution in $L^2(\mathbb{R})$ for \eqref{eq.tree}.
 
  In order to prove Theorem \ref{piecewise}, we would like first to obtain an explicit representation of the solution $u$ of equation \eqref{eq.sigma} with $\sigma$ as explained at the beginning of this section. For reasons which will be clearer in what follows, it is sufficient to compute the solution only for $x \leq 0$. Following \cite{gaveauokada87}, $u$ can be written for all $t \neq 0$ and $x \leq 0$ as 
\begin{equation} \label{solution piecewise}
u(t,x)=\int_{-\infty}^0 p_t^{1,1}(x,y)u_0(y) \, \d y + \sum_{j=2}^{N-1} \int_{(j-2)l}^{(j-1)l} p_t^{1,j}(x,y) u_0(y) \, \d y + \int_{(N-2)l}^{\infty} p_t^{1,N}(x,y)u_0(y) \, \d y, 
\end{equation} 
where 
\begin{equation} \label{kernel piecewise}
p_t^{1,j}(x,y)=\int_{\mathbb{R}} \mathrm{e}^{- \mathrm{i} \xi^2 t} \big[ C^{-}_{1j}(\xi) \mathrm{e}^{\mathrm{i} \xi (a_1 x - a_j y)} + C^{+}_{1j}(\xi) \mathrm{e}^{\mathrm{i} \xi (a_1 x + a_j y)} \big] \, d \xi, \quad y \in I_j, \ j=\overline{1,N},
\end{equation}
 with 
\begin{equation}\label{c11c1n}
C^{-}_{11}(\xi)=\frac{a_1}{2 \pi}, \quad C^{+}_{1N}(\xi)=0
\end{equation}
  and 
\begin{equation} \label{cij via tjs}
\begin{bmatrix} 
    C^{-}_{1j}(\xi) \\
    C^{+}_{1j}(\xi) \\
    \end{bmatrix}
 = \bar{T}_{j-1}(\xi) \cdot \dots \cdotp \bar{T}_{1}(\xi) 
\begin{bmatrix} 
    C^{-}_{11}(\xi) \\
    C^{+}_{11}(\xi) \\
    \end{bmatrix}.
\end{equation}
The complexly conjugated matrices $\bar{T}_j(\xi)$, $1\leq j
\leq N-1$, are of the type
\begin{equation}\label{transfer_matrices}
\bar{T}_j(\xi)= \frac{\varepsilon_j}{2 a_j}
\begin{bmatrix}
\mathrm{e}^{-\mathrm{i} \xi \delta_j (j-1) l} & \gamma_j \mathrm{e}^{\mathrm{i} \xi \varepsilon_j (j-1) l}\\
\gamma_j \mathrm{e}^{-\mathrm{i} \xi \varepsilon_j (j-1) l} & \mathrm{e}^{\mathrm{i} \xi \delta_j (j-1) l} \\
\end{bmatrix}
=: \frac{\varepsilon_j}{2 a_j}
\begin{bmatrix}
\lambda _j(\xi) & \bar{\mu}_j(\xi) \\
\mu_j(\xi) & \bar{\lambda}_j (\xi)\\
\end{bmatrix}
\end{equation}
with 
\begin{align}\label{deltaepsilongamma}
\delta_j:=a_j - a_{j+1}, \quad  \varepsilon_j: = a_j + a_{j+1}, \quad \gamma_j := \frac{\delta_j}{\varepsilon_j} \\
\lambda_j(\xi)= \mathrm{e}^{-\mathrm{i} \xi \delta_j (j-1) l}, \quad \mu_j(\xi)= \gamma_j \mathrm{e}^{-\mathrm{i} \xi \varepsilon_j (j-1) l}. \label{lambdamu}
\end{align}
In particular, taking $j=N$ in \eqref{cij via tjs} and using \eqref{c11c1n}
\begin{equation}\label{c1nminus}
\begin{bmatrix} 
    C^{-}_{1N}(\xi) \\
    0 \\
    \end{bmatrix}
 = \bar{T}_{N-1}(\xi) \cdot \dots \cdotp \bar{T}_{1}(\xi) 
\begin{bmatrix} 
    a_1/(2 \pi) \\
    C^{+}_{11}(\xi) \\
    \end{bmatrix}.
\end{equation}
This allows us to obtain $C^+_{11}$ and $C^-_{1N}$ in terms of the matrices $\bar{T}_k$, $k=\overline{1,N}$. Thus, any other $C^{\pm}_{1k}$ can be written as
\begin{equation}\label{cikminusplusinverse}
\begin{bmatrix} 
    C^{-}_{1k}(\xi) \\
    C^{+}_{1k}(\xi) \\
    \end{bmatrix}
 = \big( \bar{T}_{N-1}(\xi) \cdot \dots \cdotp \bar{T}_{k}(\xi) \big)^{-1}
\begin{bmatrix} 
    C^{-}_{1N}(\xi) \\
    0 \\
    \end{bmatrix}, \quad k=\overline{1,N-1}.
\end{equation}

Our aim is to deduce first an easier way to handle the expression for the coefficients $C^{\mp}_{1,k}(\xi)$, for $k=1,\dots,N$. While one can compute them inspired by \cite{gaveauokada87}, for clarity of the exposition we prefer to make it explicit. Their representation will be given in terms of  two  sequences of functions $E_{j,k}(\xi)$, $F_{j,k}(\xi)$, $1\leq k\leq j\leq N-1$ defined as follows:
 \begin{equation}\label{ejftildej}
 \left\{
  \begin{aligned}
& E_{j,k}(\xi) =  E_{j-1,k}(\xi) + \gamma_j \mathrm{e}^{2 \mathrm{i} \xi l (a_{k+1} + \dots + a_{j})} \widetilde{F}_{j-1,k}(\xi) \\
& \widetilde{F}_{j,k}(\xi) =  \widetilde{F}_{j-1,k}(\xi) + \gamma_j \mathrm{e}^{-2 \mathrm{i} \xi l (a_{k+1} + \dots + a_{j})} E_{j-1,k}(\xi)
\end{aligned}, j>k,
\right.
\end{equation}
and $E_{k,k}(\xi)=1$, $\widetilde{F}_{k,k}(\xi)=\gamma_k.$
%\begin{equation}\label{ekk}
%E_{k,k}(\xi)=1, \quad \widetilde{F}_{k,k}(\xi)=\gamma_k.
%\end{equation}
Recursively, we get  for all $ i_{k+1},\dots,i_j \in  \{0, 1 \}$ the existence of some constants $c_{i_{k+1},\dots,i_j}$, $\tilde{c}_{i_{k+1},\dots,i_j}$,  such that for all $1\leq k<j\leq N-1$
\begin{equation}\label{ejftildejsums}
 \left\{
  \begin{aligned}
& E_{j,k}(\xi) =  \sum_{i_{\square} \in \{ 0,1 \}} c_{i_{k+1},\dots,i_j} \mathrm{e}^{ 2 \mathrm{i} \xi l (i_{k+1} a_{k+1} + \dots i_j a_j)}, \\
& \widetilde{F}_{j,k}(\xi) =  \sum_{i_{\square} \in \{ 0,1 \}} \tilde{c}_{i_{k+1},\dots,i_j} \mathrm{e}^{- 2 \mathrm{i} \xi l (i_{k+1} a_{k+1} + \dots i_j a_j)},  
 \end{aligned}
\right.
\end{equation}
where $\sum_{i_{\square} \in \{ 0,1 \}} = \sum_{i_{k+1} \in \{ 0,1 \}} \ldots \sum_{i_{j} \in \{ 0,1 \}}$, depending on the indexes $i_n$ appearing in the complex exponentials.

In this view, let us first prove the following lemma.
\begin{lemma}\label{tjtk}
Let $\lambda_1, \mu_1, \ldots, \lambda_{N-1}, \mu_{N-1}$ as in \eqref{transfer_matrices}. For any $1 \leq k < j \leq N-1$,
\begin{align*}
  [ \bar{T}_{j}(\xi) \cdot \dots \cdotp \bar{T}_{k}(\xi) ]_{21} &= \frac{\varepsilon_k \cdot \ldots \cdotp \varepsilon_j}{2^{j-k+1} a_k \cdot \dots \cdotp a_j} \bar{\lambda}_{k}(\xi) \cdot \dots \cdotp \bar{\lambda}_j(\xi) \mathrm{e}^{- 2 \mathrm{i} \xi l (k-1) a_k}  \widetilde F_{j,k},
%\sum_{i_{\square} \in \{ 0,1 \}} \tilde{c}_{i_{k+1},\dots,i_j} \mathrm{e}^{- 2 \mathrm{i} \xi l (i_{k+1} a_{k+1} + \dots i_j a_j)} 
\\
 [ \bar{T}_{j}(\xi) \cdot \dots \cdotp \bar{T}_{k}(\xi) ]_{22} &= \frac{\varepsilon_k \cdot \ldots \cdotp \varepsilon_j}{2^{j-k+1} a_k \cdot \dots \cdotp a_j} \bar{\lambda}_{k}(\xi) \cdot \dots \cdotp \bar{\lambda}_j(\xi) \overline{E}_{j,k}.
%\sum_{i_{\square} \in \{ 0,1 \}} c_{i_{k+1},\dots,i_j} \mathrm{e}^{- 2 \mathrm{i} \xi l (i_{k+1} a_{k+1} + \dots i_j  a_j)},
\end{align*}
%
% where $\sum_{i_{\square} \in \{ 0,1 \}} = \sum_{i_{k+1} \in \{ 0,1 \}} \ldots \sum_{i_{j} \in \{ 0,1 \}}$, depending on the indexes $i_n$ appearing in the complex exponentials.
\end{lemma}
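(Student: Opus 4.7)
\emph{Proof plan.} The plan is to prove Lemma~\ref{tjtk} by induction on $j - k \ge 1$ with $k$ held fixed. A useful preliminary observation is the conjugation symmetry $P\,\bar{T}_m(\xi)\,P = \overline{\bar{T}_m(\xi)}$, where $P = \bigl(\begin{smallmatrix} 0 & 1 \\ 1 & 0 \end{smallmatrix}\bigr)$. This is immediate from the form of $\bar{T}_m$ in \eqref{transfer_matrices} and the reality of its scalar prefactor, and it propagates through products. Hence
\[
[\bar{T}_j\cdots\bar{T}_k]_{11} = \overline{[\bar{T}_j\cdots\bar{T}_k]_{22}}, \qquad [\bar{T}_j\cdots\bar{T}_k]_{12} = \overline{[\bar{T}_j\cdots\bar{T}_k]_{21}},
\]
so once the second row is known in the form stated in the lemma, the first row follows for free; this will be used in the induction step.

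The base case $j = k+1$ is a direct expansion of $\bar{T}_{k+1}\bar{T}_k$: one matches $\mu_{k+1}\lambda_k + \bar{\lambda}_{k+1}\mu_k$ against $\bar{\lambda}_{k+1}\bar{\lambda}_k\, \mathrm{e}^{-2\mathrm{i}\xi l(k-1)a_k}\bigl(\gamma_k + \gamma_{k+1}\mathrm{e}^{-2\mathrm{i}\xi l a_{k+1}}\bigr)$ using the identities $\delta_m - 2a_m = -\varepsilon_m$ and $\delta_m + \varepsilon_m = 2a_m$ in the exponents. For the inductive step, multiplying the inductive hypothesis by $\bar{T}_{j+1}$ on the left and substituting the first row via the symmetry above yields
\[
[\bar{T}_{j+1}\cdots\bar{T}_k]_{2\beta} = \tfrac{\varepsilon_{j+1}}{2a_{j+1}}\Bigl(\mu_{j+1}[\bar{T}_j\cdots\bar{T}_k]_{1\beta} + \bar{\lambda}_{j+1}[\bar{T}_j\cdots\bar{T}_k]_{2\beta}\Bigr), \qquad \beta\in\{1,2\}.
\]
Factoring out the target prefactor $\tfrac{\varepsilon_k\cdots\varepsilon_{j+1}}{2^{j+2-k}\,a_k\cdots a_{j+1}}\,\bar{\lambda}_k\cdots\bar{\lambda}_{j+1}$ (with the extra phase $\mathrm{e}^{-2\mathrm{i}\xi l(k-1)a_k}$ in the $(2,1)$ slot) reduces the claim to verifying
\[
\widetilde F_{j+1,k} = \widetilde F_{j,k} + \gamma_{j+1}\,\mathrm{e}^{-2\mathrm{i}\xi l(a_{k+1}+\cdots+a_{j+1})}\,E_{j,k},
\]
\[
E_{j+1,k} = E_{j,k} + \gamma_{j+1}\,\mathrm{e}^{+2\mathrm{i}\xi l(a_{k+1}+\cdots+a_{j+1})}\,\widetilde F_{j,k},
\]
which are exactly the defining recurrences \eqref{ejftildej}.

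The main (and only genuinely computational) obstacle is the phase bookkeeping: one needs to extract $\mathrm{e}^{\mp 2\mathrm{i}\xi l(a_{k+1}+\cdots+a_{j+1})}$ from $\mu_{j+1}/\bar{\lambda}_{j+1} = \gamma_{j+1}\mathrm{e}^{-2\mathrm{i}\xi j\,l\, a_{j+1}}$ combined with $\prod_{m=k}^{j}\lambda_m/\bar{\lambda}_m = \exp\bigl(-2\mathrm{i}\xi l\sum_{m=k}^{j}(m-1)\delta_m\bigr)$. This amounts to the Abel-summation identity
\[
j\,a_{j+1} + \sum_{m=k}^{j}(m-1)(a_m - a_{m+1}) - (k-1)a_k = a_{k+1} + a_{k+2} + \cdots + a_{j+1},
\]
obtained by telescoping $\sum_{m=k}^{j}(a_m - a_{m+1})(m-1)$. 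Once this identity is in hand, the exponents collapse to precisely those prescribed by the recurrences, and the induction closes.
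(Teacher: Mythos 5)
Your proof is correct and follows essentially the same route as the paper: both arguments peel off one transfer matrix at a time, exploit the conjugate-symmetric structure of the product $\begin{bmatrix} A_{j,k} & \bar B_{j,k} \\ B_{j,k} & \bar A_{j,k}\end{bmatrix}$, and reduce the claim to the defining recurrences \eqref{ejftildej}. The only cosmetic difference is that the paper first works with an unrescaled $F_{j,k}$ and then substitutes $\widetilde F_{j,k}=\mathrm{e}^{2\mathrm{i}\xi l(k-1)a_k}F_{j,k}$, whereas you absorb that phase directly via the telescoping identity; the content is identical.
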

%\begin{cor} \label{corollary_transfer_matrices}
%In particular, for all $1 \leq k < N-1$,
%\begin{align*}
%& \big[ \bar{T}_{N-1}(\xi) \cdot \dots \cdotp \bar{T}_{k}(\xi) \big]_{21} = \frac{\bar{\lambda}_{k} \cdot \dots \cdotp \bar{\lambda}_{N-1}}{2^{N-k} a_k \cdot \dots \cdotp a_{N-1}} \mathrm{e}^{- 2 \mathrm{i} \xi l (k-1) a_k}  \sum_{i_{\square} \in \{ 0,1 \}} \tilde{c}_{i_{k+1},\dots,i_{N-1}} \mathrm{e}^{- 2 \mathrm{i} \xi l (i_{k+1} a_{k+1} + \dots i_{N-1} a_{N-1})} \\
%& \big[ \bar{T}_{N-1}(\xi) \cdot \dots \cdotp \bar{T}_{k}(\xi) \big]_{22} = \frac{\bar{\lambda}_{k} \cdot \dots \cdotp \bar{\lambda}_{N-1}}{2^{N-k} a_k \cdot \dots \cdotp a_{N-1}} \sum_{i_{\square} \in \{ 0,1 \}} c_{i_{k+1},\dots,i_{N-1}} \mathrm{e}^{- 2 \mathrm{i} \xi l (i_{k+1} a_{k+1} + \dots i_{N-1} a_{N-1})}.
%\end{align*}
%\end{cor}
\begin{proof}[Proof of Lemma \ref{tjtk}]
Taking into account the form of the matrices $\bar{T}_j(\xi)$ in \eqref{transfer_matrices}, we observe that their product is of type 
\begin{equation}\label{producttjtk}
 \bar{T}_{j}(\xi) \cdot \dots \cdotp \bar{T}_{k}(\xi) = : 
 \begin{bmatrix}
 A_{j,k}(\xi) & \bar{B}_{j,k}(\xi) \\
 B_{j,k}(\xi) & \bar{A}_{j,k}(\xi) \\
 \end{bmatrix}.
\end{equation}
In the following, we obtain an explicit representation of the matrices $A_{j,k}$ and $B_{j,k}$. 
 For any $j>k$ we have
\begin{equation}\label{ajbj}
\begin{bmatrix}
A_{j,k}(\xi) \\
B_{j,k}(\xi) \\
\end{bmatrix}
 = \frac{\varepsilon_j}{2 a_j} 
\begin{bmatrix}
\lambda _j(\xi) & \bar{\mu}_j(\xi) \\
\mu_j(\xi) & \bar{\lambda}_j(\xi) \\
\end{bmatrix} 
\begin{bmatrix}
A_{j-1,k}(\xi) \\
B_{j-1,k}(\xi) \\
\end{bmatrix},
\end{equation}
with
\begin{equation}\label{akbk}
\begin{bmatrix}
A_{k,k}(\xi) \\
B_{k,k}(\xi) \\
\end{bmatrix}
= \frac{\varepsilon_k}{2 a_k} 
\begin{bmatrix}
\lambda_k(\xi) \\
\mu_k(\xi) \\
\end{bmatrix}.
\end{equation}
%Writing \eqref{ajbj} component-wise for each $j > k$, 
%\begin{equation}\label{ajbjsimply}
% \left\{
%  \begin{aligned}
%& A_{j,k}(\xi) = \frac{\varepsilon_j}{2 a_j} \big[ \lambda_j(\xi) A_{j-1,k}(\xi) + \bar{\mu}_j(\xi) {B}_{j-1,k}(\xi) \big] \\
%& B_{j,k}(\xi) = \frac{\varepsilon_j}{2 a_j} \big[ \mu_j(\xi) A_{j-1,k}(\xi) + \bar{\lambda}_j(\xi) {B}_{j-1,k}(\xi) \big]
%  \end{aligned}
%\right.
%\end{equation}
Denoting
\begin{equation}\label{ajbjwithejfj}
 \left\{
  \begin{aligned}
  A_{j,k}(\xi) = \frac{\varepsilon_k \cdot \ldots \cdotp \varepsilon_j}{2^{j-k+1} a_k \cdot \dots \cdotp a_j}  {\lambda}_{k}(\xi) \cdot \dots \cdotp {\lambda}_j(\xi) E_{j,k}(\xi) \\
  B_{j,k}(\xi) = \frac{\varepsilon_k \cdot \ldots \cdotp \varepsilon_j}{2^{j-k+1} a_k \cdot \dots \cdotp a_j} \bar{\lambda}_{k}(\xi) \cdot \dots \cdotp \bar{\lambda}_j(\xi) F_{j,k}(\xi)
  \end{aligned}
\right.
\end{equation}
and using that $\lambda_j(\xi)=\mathrm{e}^{-\mathrm{i} \xi \delta_j (j-1) l}$ and $\mu_j(\xi)=\gamma_j \mathrm{e}^{-\mathrm{i} \xi \varepsilon_j (j-1) l}$, with $\delta_j, \varepsilon_j$ and $\gamma_j$ as in \eqref{deltaepsilongamma}, 
we obtain that, in order to verify \eqref{ajbj}, $E_{j,k}$ and $F_{j,k}$ satisfy
%\begin{equation}\label{ejfj}
% \left\{
%  \begin{aligned}
%  E_{j,k}(\xi) =  E_{j-1,k}(\xi) + \frac{\bar{\mu}_j(\xi)}{\lambda_j(\xi)} \cdot \frac{\bar{\lambda}_{k} (\xi) \cdot \dots \cdotp \bar{\lambda}_{j-1}(\xi)}{{\lambda}_{k}(\xi) \cdot \dots \cdotp {\lambda}_{j-1}(\xi)}  F_{j-1,k}(\xi)\\
%  F_{j,k}(\xi) =  F_{j-1,k}(\xi) + \frac{{\mu}_j(\xi)}{\bar{\lambda}_j(\xi)} \cdot \frac{{\lambda}_{k}(\xi) \cdot \dots \cdotp {\lambda}_{j-1}(\xi)}{\bar{\lambda}_{k}(\xi) \cdot \dots \cdotp \bar{\lambda}_{j-1}(\xi)}  E_{j-1,k}(\xi)
%  \end{aligned}
%\right.
%\end{equation}
% with 
%\begin{equation*}
%E_{k,k}(\xi)=1, \quad F_{k,k}(\xi)=\frac{\mu_k(\xi)}{\bar{\lambda}_k(\xi)}=\gamma_k \mathrm{e}^{- 2 \mathrm{i} \xi l (k-1) a_k}.
%\end{equation*}
\begin{equation*}
 \left\{
  \begin{aligned}
& E_{j,k}(\xi) =  E_{j-1,k}(\xi) + \gamma_j \mathrm{e}^{2 \mathrm{i} \xi l (a_{k+1} + \dots + a_{j})} \cdot \mathrm{e}^{2 \mathrm{i} \xi l (k-1) a_k}  F_{j-1,k}(\xi) \\
& F_{j,k}(\xi) =  F_{j-1,k}(\xi) + \gamma_j \mathrm{e}^{-2 \mathrm{i} \xi l (a_{k+1} + \dots + a_{j})} \cdot \mathrm{e}^{-2 \mathrm{i} \xi l (k-1) a_k}  E_{j-1,k}(\xi)
\end{aligned}
\right.
\end{equation*}
with $E_{k,k}(\xi)=1,\ F_{k,k}(\xi)=\gamma_k\mathrm{e}^{-2 \mathrm{i} \xi l (k-1)a_k}$.  Introducing 
%\begin{equation}\label{ftilde}
$\widetilde{F}_{j,k}(\xi)=\mathrm{e}^{2 \mathrm{i} \xi l (k-1)a_k} F_{j,k}(\xi)$
%\end{equation}
we obtain \eqref{ejftildej}.
%\[
% \left\{
%  \begin{aligned}
%& E_{j,k}(\xi) =  E_{j-1,k}(\xi) + \gamma_j \mathrm{e}^{2 \mathrm{i} \xi l (a_{k+1} + \dots + a_{j})} \widetilde{F}_{j-1,k}(\xi) \\
%& \widetilde{F}_{j,k}(\xi) =  \widetilde{F}_{j-1,k}(\xi) + \gamma_j \mathrm{e}^{-2 \mathrm{i} \xi l (a_{k+1} + \dots + a_{j})} E_{j-1,k}(\xi)
%\end{aligned}
%\right.
%\]
%with $
%E_{k,k}(\xi)=1,  \widetilde{F}_{k,k}(\xi)=\gamma_k.
%$
% Recursively, we get that there exist some constants $c_{i_{k+1},\dots,i_j}, \tilde{c}_{i_{k+1},\dots,i_j}$, for all $ i_{k+1},\dots,i_j \in  \{0, 1 \}$, such that
%\begin{equation}\label{ejftildejsums}
% \left\{
%  \begin{aligned}
%& E_{j,k}(\xi) =  \sum_{i_{\square} \in \{ 0,1 \}} c_{i_{k+1},\dots,i_j} \mathrm{e}^{ 2 \mathrm{i} \xi l (i_{k+1} a_{k+1} + \dots i_j a_j)} \\
%& \widetilde{F}_{j,k}(\xi) =  \sum_{i_{\square} \in \{ 0,1 \}} \tilde{c}_{i_{k+1},\dots,i_j} \mathrm{e}^{- 2 \mathrm{i} \xi l (i_{k+1} a_{k+1} + \dots i_j a_j)}, \quad j \geq k.
% \end{aligned}
%\right.
%\end{equation}

 In view of the notations in \eqref{producttjtk}
\[
 \big[ \bar{T}_{j}(\xi) \cdot \dots \cdotp \bar{T}_{k}(\xi) \big]_{21} = {B}_{j,k}(\xi), \big[ \bar{T}_{j}(\xi) \cdot \dots \cdotp \bar{T}_{k}(\xi) \big]_{22} = \bar{A}_{j,k}(\xi) 
 \]
 the desired result follows by using \eqref{ajbjwithejfj}.
\end{proof}

We  now give the precise expansions of  the coefficients $C^{\mp}_{1k}(\xi)$, $k=\overline{1,N}$, besides $C^{-}_{11}(\xi)$ and $C^{+}_{1N}(\xi)$ in \eqref{c11c1n}. In order to enlighten the expressions, set
\begin{equation} \label{alpha_k coefficient}
\alpha_k:=\dfrac{\varepsilon_1 \cdot \ldots \cdotp \varepsilon_{k-1}}{2^{k-1} a_1 \cdot \ldots \cdotp a_{k-1}}(1-\gamma_1^2)\cdot \dots \cdotp (1-\gamma_{k-1}^2), \quad 2 \leq k\leq N.
\end{equation}

\begin{lemma} \label{preliminary_coeff}
 The coefficients $C^{\mp}_{1k}(\xi)$,  can be written as follows
\[
 C^{-}_{11}(\xi) = \dfrac{a_1}{2 \pi},\quad  C^{+}_{11}(\xi)=-C^{-}_{11}(\xi) \frac{\widetilde F_{N-1,1}(\xi)}{\overline{E}_{N-1,1}(\xi)},
 \]
\[
 C^{-}_{1N}(\xi)= \frac{\alpha_{N}}{{\overline{\lambda}_1(\xi)\cdot\ldots\cdot\overline{\lambda}_{N-1}(\xi)}}    \frac{ C^{-}_{11}(\xi)}{\overline{E}_{N-1,1}(\xi)}, \quad  C^{+}_{1N}(\xi) = 0,
\]
and for $2 \leq k \leq N-1$,
\[
 C^{-}_{1k}(\xi)= \frac{\alpha_k}{{\overline{\lambda}_1(\xi)\cdot\ldots\cdot\overline{\lambda}_{k-1}(\xi)}} \cdot C^{-}_{11}(\xi) \cdot \frac{\overline{E}_{N-1,k}(\xi)}{\overline{E}_{N-1,1}(\xi)},
 \]
 \[  C^{+}_{1k}(\xi)= - \frac{\alpha_k}{{\overline{\lambda}_1(\xi)\cdot\ldots\cdot\overline{\lambda}_{k-1}(\xi)}} \cdot C^{-}_{11}(\xi) \cdot \mathrm{e}^{- 2 \mathrm{i} \xi l (k-1) a_k} \frac{\widetilde F_{N-1,k}(\xi)}{\overline{E}_{N-1,1}(\xi)}. 
\]
\end{lemma}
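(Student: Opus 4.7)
The plan is to extract all coefficients from the two boundary conditions $C^-_{11}(\xi)=a_1/(2\pi)$ and $C^+_{1N}(\xi)=0$ in \eqref{c11c1n}, combined with the explicit entries of the matrix products provided by Lemma \ref{tjtk}. Writing $\bar T_{N-1}(\xi)\cdots\bar T_1(\xi)$ in the block form of \eqref{producttjtk}, the relation \eqref{c1nminus} is a $2\times 2$ linear system; its second row reads $B_{N-1,1}C^-_{11}+\bar A_{N-1,1}C^+_{11}=0$, so $C^+_{11}=-C^-_{11}B_{N-1,1}/\bar A_{N-1,1}$. Applying Lemma \ref{tjtk} at $k=1$ (for which the exponential $\mathrm{e}^{-2\mathrm{i}\xi l(k-1)a_k}$ equals $1$), the common prefactor and the product $\bar\lambda_1\cdots\bar\lambda_{N-1}$ cancel, yielding $C^+_{11}/C^-_{11}=-\widetilde F_{N-1,1}/\overline E_{N-1,1}$.

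For $C^-_{1N}$ I would substitute the just-computed $C^+_{11}$ into the first row of the same system, producing
\[
C^-_{1N}=\frac{C^-_{11}}{\bar A_{N-1,1}}\bigl(A_{N-1,1}\bar A_{N-1,1}-|B_{N-1,1}|^2\bigr)=\frac{C^-_{11}}{\bar A_{N-1,1}}\prod_{j=1}^{N-1}\det\bar T_j(\xi).
\]
A short computation gives $\det\bar T_j(\xi)=(\varepsilon_j^2/(4a_j^2))(|\lambda_j|^2-|\mu_j|^2)=(\varepsilon_j^2/(4a_j^2))(1-\gamma_j^2)$, so the product telescopes to $(\varepsilon_1\cdots\varepsilon_{N-1}/(2^{N-1}a_1\cdots a_{N-1}))^2\prod_{j=1}^{N-1}(1-\gamma_j^2)$; dividing by the explicit expression of $\bar A_{N-1,1}$ from Lemma \ref{tjtk} cancels one copy of the prefactor and reproduces the constant $\alpha_N$ from \eqref{alpha_k coefficient}, together with the remaining factor $1/(\bar\lambda_1\cdots\bar\lambda_{N-1}\overline E_{N-1,1})$.

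For the intermediate coefficients $2\leq k\leq N-1$, I would use \eqref{cikminusplusinverse}: since $\bar T_{N-1}(\xi)\cdots\bar T_k(\xi)$ has the same conjugate block structure, its inverse applied to the vector $(C^-_{1N},0)^T$ yields $C^-_{1k}=\bar A_{N-1,k}C^-_{1N}/D_k$ and $C^+_{1k}=-B_{N-1,k}C^-_{1N}/D_k$, where $D_k:=\prod_{j=k}^{N-1}\det\bar T_j(\xi)$. Inserting Lemma \ref{tjtk} and the previous formula for $C^-_{1N}$, the scalar $C^-_{1N}/D_k$ simplifies via the elementary identity
\[
\frac{\alpha_N}{\frac{\varepsilon_k\cdots\varepsilon_{N-1}}{2^{N-k}a_k\cdots a_{N-1}}\prod_{j=k}^{N-1}(1-\gamma_j^2)}=\alpha_k,
\]
which is immediate from the definition \eqref{alpha_k coefficient}. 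The entry $\bar A_{N-1,k}$ then supplies the factor $\overline E_{N-1,k}$ in $C^-_{1k}$, while $B_{N-1,k}$ supplies both the surviving phase $\mathrm{e}^{-2\mathrm{i}\xi l(k-1)a_k}$ and the factor $\widetilde F_{N-1,k}$ in $C^+_{1k}$, with an overall minus sign.

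The only real obstacle is bookkeeping: one must track the multiplicative prefactors, the conjugates $\bar\lambda_j$ versus $\lambda_j$, and the phase $\mathrm{e}^{-2\mathrm{i}\xi l(k-1)a_k}$ which survives in $B_{N-1,k}$ but not in $\bar A_{N-1,k}$. Once the determinant formula for $\bar T_j(\xi)$ and the cancellation identity above are in place, all four expressions claimed in the lemma drop out by direct comparison.
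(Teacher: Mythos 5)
Your proposal is correct and follows essentially the same route as the paper: solve the second row of \eqref{c1nminus} for $C^{+}_{11}$, back-substitute into the first row for $C^{-}_{1N}$, invert the partial products via \eqref{cikminusplusinverse} for the intermediate coefficients, and read off the entries from Lemma \ref{tjtk}. The only cosmetic difference is that you compute $|A_{j,k}|^2-|B_{j,k}|^2$ as a product of the determinants $\det\bar{T}_j(\xi)=(\varepsilon_j/(2a_j))^2(1-\gamma_j^2)$, whereas the paper obtains the identical formula \eqref{determinant} by unwinding the recursion \eqref{ajbj} step by step.
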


%\begin{lemma} \label{preliminary_coeff}
%Let $\gamma_1, \ldots, \gamma_N$ as in \eqref{deltaepsilongamma}. The coefficients $C^{\mp}_{1k}(\xi)$ can be written as
%\begin{equation*}
%\begin{aligned}
%& \ \ \ C^{+}_{11}(\xi)=-C^{-}_{11}(\xi) \cdot \frac{\big[ \bar{T}_{N-1}(\xi) \cdot \dots \cdotp \bar{T}_{1}(\xi) \big]_{21}}{\big[ \bar{T}_{N-1}(\xi) \cdot \dots \bar{T}_{1}(\xi) \big]_{22}}, \\
%& \ \ \ C^{-}_{1N}(\xi)= C^{-}_{11}(\xi) \cdot \frac{ (1-\gamma^2_1) \cdot \dots \cdotp (1-\gamma^2_{N-1})}{2^{N-1} a_1 \cdot \dots \cdotp a_{N-1}} \cdot \frac{1}{\big[ \bar{T}_{N-1}(\xi) \cdot \dots \cdotp \bar{T}_{1}(\xi) \big]_{22}}, \\
%& \left\{
%\begin{aligned}
%& C^{-}_{1k}(\xi)= C^{-}_{11}(\xi) \cdot \frac{ (1-\gamma^2_1) \cdot \dots \cdotp (1-\gamma^2_{k-1})}{2^{k-1} a_1 \cdot \dots \cdotp a_{k-1}} \cdot \frac{\big[ \bar{T}_{N-1}(\xi) \cdot \dots \cdotp \bar{T}_{k}(\xi) \big]_{22}}{\big[ \bar{T}_{N-1}(\xi) \cdot \dots \bar{T}_{1}(\xi) \big]_{22}}\\
%& C^{+}_{1k}(\xi)= - C^{-}_{11}(\xi) \cdot \frac{ (1-\gamma^2_1) \cdot \dots \cdotp (1-\gamma^2_{k-1})}{2^{k-1} a_1 \cdot \dots \cdotp a_{k-1}} \cdot \frac{\big[ \bar{T}_{N-1}(\xi) \cdot \dots \cdotp \bar{T}_{k}(\xi) \big]_{21}}{\big[ \bar{T}_{N-1}(\xi) \cdot \dots \cdotp \bar{T}_{1}(\xi) \big]_{22}}\\
%\end{aligned}
%\right., \quad 2 \leq k \leq N-1.
%\end{aligned}
%\end{equation*}
%\end{lemma}

\begin{proof}
We first emphasize that $\big[ \bar{T}_{N-1}(\xi) \cdot \dots \cdotp \bar{T}_{1}(\xi) \big]_{22}\neq 0$ for all $\xi\in \rr$. This follows from \eqref{c1nminus} since $a_1\neq 0$ (otherwise all coefficients in the matrix would be 0 and the matrix would not be invertible). This implies that $A_{N-1,1}(\xi)$ and $E_{N-1,1}(\xi)$ do  not vanish  on the real line.
From \eqref{c1nminus}, we get immediately the expression for $C^{+}_{11}(\xi)$. We deduce, substituting $C^{+}_{11}(\xi)$, that
\begin{equation}\label{c1Nminus}
C^{-}_{1N}(\xi)=C^{-}_{11}(\xi) \cdot \frac{|A_{N-1,1}(\xi)|^2-|B_{N-1,1}(\xi)|^2}{\bar{A}_{N-1,1}(\xi)}.
\end{equation}
 Notice that due to \eqref{ajbj}, we have for all $1 \leq k < j \leq N-1$
\begin{align}\label{determinant}
|A_{j,k}(\xi)|^2-&|B_{j,k}(\xi)|^2  = \Big(\frac{\varepsilon_j}{2 a_j}\Big)^2 \cdot (1-\gamma^2_j) \big[ |A_{j-1,k}(\xi)|^2-|B_{j-1,k}(\xi)|^2 \big] = \ldots = \nonumber \\
& = \frac{(\varepsilon_{k+1} \cdot \ldots \cdotp \varepsilon_j)^2}{(2^{j-k} a_{k+1} \cdot \dots \cdotp a_j)^2 } (1-\gamma^2_{k+1}) \cdot \dots \cdotp (1-\gamma^2_j) \big[ |A_{k,k}(\xi)|^2-|B_{k,k}(\xi)|^2 \big] \nonumber \\
&  = \frac{(\varepsilon_{k} \cdot \ldots \cdotp \varepsilon_j)^2}{(2^{j-k+1} a_k \cdot \dots \cdotp a_j )^2} (1-\gamma^2_k) \cdot \dots \cdotp (1-\gamma^2_j),
\end{align}
 where the last inequality follows from \eqref{akbk}. Thus, \eqref{c1Nminus} rewrites as
\begin{equation*}
C^{-}_{1N}(\xi)= C^{-}_{11}(\xi) \cdot \frac{(\varepsilon_1 \cdot \ldots \cdotp \varepsilon_{N-1})^2}{(2^{N-1} a_1 \cdot \dots \cdotp a_{N-1})^2}  (1-\gamma^2_1) \cdot \dots \cdotp (1-\gamma^2_{N-1}) \cdot \frac{1}{\big[ \bar{T}_{N-1}(\xi) \cdot \dots \cdotp \bar{T}_{1}(\xi) \big]_{22}}.
\end{equation*}

 Let us now handle the coefficients $C^{\mp}_{1k}(\xi)$, $2 \leq k \leq N-1$. By \eqref{producttjtk}, one can check that
\begin{align*}
(\bar{T}_{j}(\xi) \cdot \dots \cdotp& \bar{T}_{k}(\xi))^{-1}  =\frac{1}{|A_{j,k}(\xi)|^2-|B_{j,k}(\xi)|^2} \cdot
 \begin{bmatrix}
 \bar{A}_{j,k}(\xi) & -\bar{B}_{j,k}(\xi) \\
 -B_{j,k}(\xi) & \bar{A}_{j,k}(\xi) \\
 \end{bmatrix} \\
 & = \frac{(2^{j-k+1} a_k \cdot \dots \cdotp a_j)^2 }{(\varepsilon_k \cdot \ldots \cdotp \varepsilon_j)^2} \cdot \dfrac{1}{(1-\gamma^2_k) \cdot \dots \cdotp (1-\gamma^2_j)}\cdot 
 \begin{bmatrix}
 \bar{A}_{j,k}(\xi) & -\bar{B}_{j,k}(\xi) \\
 -B_{j,k}(\xi) & \bar{A}_{j,k}(\xi) \\
 \end{bmatrix},
\end{align*}
 where the last identity follows by \eqref{determinant}. Taking now $j=N-1$ in the above relation, by \eqref{cikminusplusinverse} we obtain
\begin{equation*}
\begin{bmatrix} 
    C^{-}_{1k}(\xi) \\
    C^{+}_{1k}(\xi) \\
    \end{bmatrix}
 = \frac{(2^{N-k} a_k \cdot \dots \cdotp a_{N-1})^2 }{(\varepsilon_k \cdot \ldots \cdotp \varepsilon_{N-1})^2(1-\gamma^2_k) \cdot \dots \cdotp (1-\gamma^2_{N-1})} \begin{bmatrix}
 \bar{A}_{N-1,k}(\xi) & -\bar{B}_{N-1,k}(\xi) \\
 -B_{N-1,k}(\xi) & \bar{A}_{N-1,k}(\xi) \\
 \end{bmatrix} \cdot
\begin{bmatrix} 
    C^{(-)}_{1N}(\xi) \\
    0 \\
\end{bmatrix}, 
\end{equation*}
 which implies together with \eqref{producttjtk} that  for $ 2 \leq k \leq N-1$
\begin{equation*}
\left\{
\begin{aligned}
& C^{-}_{1k}(\xi)= \frac{(2^{N-k} a_k \cdot \dots \cdotp a_{N-1})^2 }{(\varepsilon_k \cdot \ldots \cdotp \varepsilon_{N-1})^2(1-\gamma^2_k) \cdot \dots \cdotp (1-\gamma^2_{N-1})} {\big[ \bar{T}_{N-1}(\xi) \cdot \dots \cdotp \bar{T}_{k}(\xi) \big]_{22}} \cdot C^{-}_{1N}(\xi),\\
& C^{+}_{1k}(\xi)=  -  \frac{(2^{N-k} a_k \cdot \dots \cdotp a_{N-1})^2 }{(\varepsilon_k \cdot \ldots \cdotp \varepsilon_{N-1})^2(1-\gamma^2_k) \cdot \dots \cdotp (1-\gamma^2_{N-1})} {\big[ \bar{T}_{N-1}(\xi) \cdot \dots \cdotp \bar{T}_{k}(\xi) \big]_{21}} \cdot C^{-}_{1N} (\xi).
\end{aligned}
\right.
\end{equation*}
 Substituting now the previously obtained expression for $C^{-}_{1N}(\xi)$, we get for $ 2 \leq k \leq N-1$
\begin{equation*}
\left\{
\begin{aligned}
& C^{-}_{1k}(\xi)= C^{-}_{11}(\xi) \cdot \frac{ (\varepsilon_1 \cdot \ldots \cdotp \varepsilon_{k-1})^2  (1-\gamma^2_1) \cdot \dots \cdotp (1-\gamma^2_{k-1})}{(2^{k-1} a_1 \cdot \dots \cdotp a_{k-1})^2} \cdot \frac{\big[ \bar{T}_{N-1}(\xi) \cdot \dots \cdotp \bar{T}_{k}(\xi) \big]_{22}}{\big[ \bar{T}_{N-1}(\xi) \cdot \dots \cdotp \bar{T}_{1}(\xi) \big]_{22}},\\
& C^{+}_{1k}(\xi)= - C^{-}_{11}(\xi) \cdot \frac{ (\varepsilon_1 \cdot \ldots \cdotp \varepsilon_{k-1})^2(1-\gamma^2_1) \cdot \dots \cdotp (1-\gamma^2_{k-1})  }{(2^{k-1} a_1 \cdot \dots \cdotp a_{k-1})^2} \cdot \frac{\big[ \bar{T}_{N-1}(\xi) \cdot \dots \cdotp \bar{T}_{k}(\xi) \big]_{21}}{\big[ \bar{T}_{N-1}(\xi) \cdot \dots \cdotp \bar{T}_{1}(\xi) \big]_{22}}.\\
\end{aligned}
\right.
\end{equation*}
The explicit values   given in Lemma \ref{tjtk} complete the proof.
\end{proof}

Notice that in Lemma \ref{preliminary_coeff}, $\overline{E}_{N-1,1}(\xi)$ appears as the denominator of the coefficients. 	In view \eqref{ejftildejsums} and using that 
$\overline{E}_{N-1,1}(\xi)$  is non-vanishing on the real line, by Wiener's Theorem in \cite[Theorem 18.21]{rudin}, we immediately get that
\begin{equation*}
\frac{1}{\overline{E}_{N-1,1}(\xi)}= \sum_{n_{\square} \in \mathbb{Z}} c_{n_{2},\dots,n_{N-1}} \mathrm{e}^{- 2 \mathrm{i} \xi l (n_{2} a_{2} + \dots n_{N-1} a_{N-1})}.
\end{equation*}
 In what follows we prove that  the infinite sum is indexed only by nonnegative integers. This will be crucial in the proof of the Theorem \ref{piecewise theorem}.
\begin{lemma}\label{denominator_expansion}
There exist constants $(c_{n_{\square} })_{n_{\square} \geq 0}$ such that
\begin{equation*}
\frac{1}{\overline{E}_{N-1,1}(\xi)}= \sum_{n_{\square} \geq 0} c_{n_{2},\dots,n_{N-1}} \mathrm{e}^{- 2 \mathrm{i} \xi l (n_{2} a_{2} + \dots n_{N-1} a_{N-1})}.
\end{equation*}
\end{lemma}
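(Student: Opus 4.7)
The plan is to view $\overline{E}_{N-1,1}(\xi)$ as the value at $X_m = \mathrm{e}^{-2\mathrm{i}\xi l a_m}$ of a polynomial $\mathcal{P}(X_2,\ldots,X_{N-1})$ of multi-degree at most one in each variable, with real coefficients (products of $\gamma_j$'s, by \eqref{ejftildejsums}) and constant term one. If I can show that $\mathcal{P}$ has no zeros on the closed polydisk $\{|X_m|\leq 1\}$, then compactness yields $\mathcal{P}\neq 0$ on some slightly larger polydisk, so $1/\mathcal{P}$ is holomorphic in a neighborhood of the closed unit polydisk and its multivariate Taylor series around the origin converges absolutely on $\{|X_m|\leq 1\}$ and by construction contains only monomials with non-negative multi-index. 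Substituting back $X_m=\mathrm{e}^{-2\mathrm{i}\xi l a_m}$ then yields the claimed expansion.

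The non-vanishing I would prove by induction on $j\geq k$ (applied with $k=1$), tracking simultaneously the polynomials $\mathcal{P}_{j,k},\mathcal{Q}_{j,k}$ associated with $\overline{E}_{j,k},\widetilde{F}_{j,k}$, together with their reciprocal polynomials $\mathcal{P}^*_{j,k}(X):=X_{k+1}\cdots X_j\,\mathcal{P}_{j,k}(1/X)$ and $\mathcal{Q}^*_{j,k}(X):=X_{k+1}\cdots X_j\,\mathcal{Q}_{j,k}(1/X)$. The base case $j=k$ reads $\mathcal{P}_{k,k}=1$, $\mathcal{Q}_{k,k}=\gamma_k$, and the recursion \eqref{ejftildej} translates, purely algebraically, into
\[
\mathcal{P}_{j+1,k}=\mathcal{P}_{j,k}+\gamma_{j+1}X_{j+1}\mathcal{Q}^*_{j,k},\qquad \mathcal{Q}_{j+1,k}=\mathcal{Q}_{j,k}+\gamma_{j+1}X_{j+1}\mathcal{P}^*_{j,k},
\]
together with the induced relations $\mathcal{P}^*_{j+1,k}=X_{j+1}\mathcal{P}^*_{j,k}+\gamma_{j+1}\mathcal{Q}_{j,k}$ and $\mathcal{Q}^*_{j+1,k}=X_{j+1}\mathcal{Q}^*_{j,k}+\gamma_{j+1}\mathcal{P}_{j,k}$. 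Expanding $\mathcal{P}_{j+1,k}\mathcal{P}^*_{j+1,k}-\mathcal{Q}_{j+1,k}\mathcal{Q}^*_{j+1,k}$, the cross terms $\gamma_{j+1}\mathcal{P}_{j,k}\mathcal{Q}_{j,k}$ and $X_{j+1}^{2}\gamma_{j+1}\mathcal{P}^*_{j,k}\mathcal{Q}^*_{j,k}$ cancel and one obtains inductively the polynomial identity
\[
\mathcal{P}_{j,k}\mathcal{P}^*_{j,k}-\mathcal{Q}_{j,k}\mathcal{Q}^*_{j,k}=\Big(\prod_{m=k}^{j}(1-\gamma_m^2)\Big)X_{k+1}\cdots X_j.
\]

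On the torus $|X_m|=1$, the reality of the coefficients gives $|\mathcal{P}^*_{j,k}|=|\mathcal{P}_{j,k}|$ and $|\mathcal{Q}^*_{j,k}|=|\mathcal{Q}_{j,k}|$, so the identity forces $|\mathcal{P}_{j,k}|^2-|\mathcal{Q}_{j,k}|^2=\prod_{m=k}^{j}(1-\gamma_m^2)>0$, and in particular $|\mathcal{Q}^*_{j,k}|<|\mathcal{P}_{j,k}|$ on the torus. Under the induction hypothesis that $\mathcal{P}_{j,k}$ has no zeros on the closed polydisk, $\mathcal{Q}^*_{j,k}/\mathcal{P}_{j,k}$ is holomorphic there and the maximum-modulus principle for polydiscs gives $|\mathcal{Q}^*_{j,k}/\mathcal{P}_{j,k}|\leq M<1$ on the entire polydisk. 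Since $|\gamma_{j+1}|<1$ and $|X_{j+1}|\leq 1$, the recursion then yields
\[
|\mathcal{P}_{j+1,k}(X)|\geq |\mathcal{P}_{j,k}(X')|\bigl(1-|\gamma_{j+1}|M\bigr)>0
\]
on the closed polydisk in all $j+1-k$ variables, closing the induction.

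The main obstacle is to establish the algebraic identity at the level of polynomials rather than only for $\xi\in\mathbb{R}$: the analogous identity \eqref{determinant} is obtained in the paper by taking absolute values along the curve $\xi\mapsto(\mathrm{e}^{-2\mathrm{i}\xi la_m})_m$, which is a priori insufficient when the $a_m$'s are not rationally independent, whereas the maximum-modulus argument requires the strict inequality $|\mathcal{P}_{j,k}|>|\mathcal{Q}^*_{j,k}|$ on the entire torus. Keeping the computation at the polynomial / reciprocal-polynomial level as above circumvents this issue.
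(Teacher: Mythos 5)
Your proposal is correct, but it follows a genuinely different route from the paper's. The paper argues pointwise in $\xi$: setting $w_j=\widetilde F_{j,1}(\xi)/E_{j,1}(\xi)$, the recursion \eqref{ej1ftildej1-2} becomes $w_j=(w_{j-1}+b_j)/(1+\overline{b_j}w_{j-1})$ with $|b_j|=|\gamma_j|<1$, so the self-map property of the unit disk gives $|w_j|<1$ for every real $\xi$; then $1/E_{j,1}$ is obtained from $1/E_{j-1,1}$ by expanding $\bigl(1+\gamma_j\mathrm{e}^{2\mathrm{i}\xi l(a_2+\cdots+a_j)}w_{j-1}\bigr)^{-1}$ as a geometric series and checking that each power again has a representation of the form \eqref{rep.E}. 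You instead lift the whole construction to the polynomial ring in independent variables $X_m$, prove the Schur--Cohn-type identity $\mathcal{P}\mathcal{P}^*-\mathcal{Q}\mathcal{Q}^*=\prod_m(1-\gamma_m^2)\,X_{k+1}\cdots X_j$ \emph{as polynomials} (your cross-term cancellation is the polynomial analogue of \eqref{determinant}), deduce $|\mathcal{Q}^*|<|\mathcal{P}|$ on the full distinguished boundary rather than only on the curve $\xi\mapsto(\mathrm{e}^{-2\mathrm{i}\xi la_m})_m$, and propagate the zero-freeness of $\mathcal{P}$ into the closed polydisk via the maximum principle, so that the desired expansion is just the Taylor series of $1/\mathcal{P}$ at the origin. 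I checked your recursions for $\mathcal{P}^*,\mathcal{Q}^*$, the cancellation, the base case, and the torus identity $|\mathcal{P}^*|=|\mathcal{P}|$, $|\mathcal{Q}^*|=|\mathcal{Q}|$ coming from the realness of the coefficients: all are sound, as is the final estimate $|\mathcal{P}_{j+1,k}|\geq|\mathcal{P}_{j,k}|\,(1-|\gamma_{j+1}|M)$. The trade-off: the paper's argument is shorter and avoids several complex variables, working only with functions of one real variable $\xi$; yours delivers, in addition, geometric decay $|c_{n_2,\dots,n_{N-1}}|\lesssim(1+\delta)^{-(n_2+\cdots+n_{N-1})}$ of the coefficients by Cauchy's estimates on a polydisk of radius $1+\delta$ (hence absolute summability, which is convenient for the later rearrangements in \eqref{longht} and \eqref{eta}), and, as you point out, it is insensitive to whether the $a_m$'s are rationally independent, since all identities and inequalities are established on the whole torus rather than on the closure of the exponential curve.
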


\begin{proof}
%By \eqref{ejftildejsums} and Lemma \ref{tjtk}, 
%\begin{align*}
%\big[ \bar{T}_{N-1}(\xi) \cdot \dots \cdotp \bar{T}_{1}(\xi) \big]_{22} & = \frac{\bar{\lambda}_{1} \cdot \dots \cdotp \bar{\lambda}_{N-1} }{2^{N-1} a_1 \cdot \dots \cdotp a_{N-1}}\sum_{i_{\square} \in \{ 0,1 \}} c_{i_{2},\dots,i_{N-1}} \mathrm{e}^{[- 2 \mathrm{i} \xi l (i_{2} a_{2} + \dots i_{N-1} a_{N-1})]} \\
%& = \frac{\bar{\lambda}_{1} \cdot \dots \cdotp \bar{\lambda}_{N-1} }{2^{N-1} a_1 \cdot \dots \cdotp a_{N-1}}  \bar{E}_{N-1,1}(\xi).
%\end{align*}
%
% In order to prove Lemma \ref{denominator_expansion} it it sufficient to analyze the function $E_{N-1,1}$ and to prove that $1/\overline{E}_{N-1,1}$ has the desired representation.
We will prove that all the functions  ${E}_{j,1}(\xi)$, $j=1,\dots, N-1$ can be represented as follows:
\begin{equation}
\label{rep.E}
  \frac 1{{E}_{j,1}(\xi)}=\sum_{n_{\square} \geq 0} c_{n_{2},\dots,n_{j}} \mathrm{e}^{2 \mathrm{i} \xi l (n_{2} a_{2} + \dots n_{j} a_{j})}.
\end{equation}
We use \eqref{ejftildej}, for $k=1$ and $j=\overline{1,N-1}$ to obtain the following recurrences 
\begin{equation}\label{ej1ftildej1-2}
 \left\{
  \begin{aligned}
& E_{j,1}(\xi) =  E_{j-1,1}(\xi) + \gamma_j \mathrm{e}^{2 \mathrm{i} \xi l (a_{2} + \dots + a_{j})} \widetilde{F}_{j-1,1}(\xi) \\
& \widetilde{F}_{j,1}(\xi) =  \widetilde{F}_{j-1,1}(\xi) + \gamma_j \mathrm{e}^{-2 \mathrm{i} \xi l (a_{2} + \dots + a_{j})} E_{j-1,1}(\xi)
\end{aligned}
\right.
\end{equation}
  with 
$
E_{1,1}(\xi)=1$, $ \widetilde{F}_{1,1}(\xi)=\gamma_1$,
 where $\gamma_1$ is given in \eqref{deltaepsilongamma}.

We divide the proof in two steps.

\medskip

\underline{Step 1}: We prove that for any $j=1,\dots, N-1$, $|\widetilde F_{j,1}(\xi)/E_{j,1}(\xi)|<1$. Defining $w_j=\widetilde F_{j,1}(\xi)/E_{j,1}(\xi)$ we obtain that it verifies
\[
w_j=\frac{w_{j-1}+b_j}{1+\overline{b_j}w_{j-1}}, j\geq 2,
\]
where $b_j=\gamma_j \mathrm{e}^{-2i\xi l (a_2+\dots+a_j)}$ and $w_1=\gamma_1$. Since $|b_j|<1$, the map $z\to \frac{z+b_j}{1+\overline{b_j}z}$ maps the complex unit disk $|z|<1$ to itself. Using that $|w_1|<1$ and an inductive argument  we obtain that $|w_j|<1$ for all $j\geq 2$ so $|\widetilde F_{j,1}(\xi)/E_{j,1}(\xi)|<1$ for $j=1,\dots, N-1$.
\medskip

\underline{Step 2}: We prove identity \eqref{rep.E}.  We first recall representation \eqref{ejftildejsums} of $\widetilde F_{j,1}$
\[
 \widetilde{F}_{j,1}(\xi) =  \sum_{i_{\square} \in \{ 0,1 \}} \tilde{c}_{i_{2},\dots,i_j} \mathrm{e}^{- 2 \mathrm{i} \xi l (i_{2} a_{2} + \dots+ i_j a_j)}. 
\]
It follows that the following product
\[
{e}^{2 i\xi l (a_2 + \dots + a_j)} \widetilde{F}_{j,1}(\xi)= \sum_{i_{\square} \in \{ 0,1 \}} \tilde{c}_{i_{2},\dots,i_j} \mathrm{e}^{ 2 \mathrm{i} \xi l ((1-i_{2}) a_{2} + \dots+ (1-i_j) a_j)}
\]
has a representation of the form \eqref{rep.E}. 

Let us now prove \eqref{rep.E} by an inductive argument. For $j=1$ it is obvious. Let as assume that the representation is true for $j-1$ and prove it for $j\geq 2$. The recurrences in \eqref{ej1ftildej1-2} give us that
\[
\frac 1{E_{j,1}(\xi)}=\frac{1}{E_{j-1,1}(\xi)}\frac 1{1+\gamma_j \mathrm{e}^{2i\xi l (a_2+\dots +a_j)} \widetilde F_{j-1,1}(\xi)/E_{j-1,1}(\xi)}. 
\] 
Since $1/E_{j-1,1}(\xi)$ admits representation \eqref{rep.E} it is sufficient to analyse the second factor in the above identity.
 Using Step 1 we can write 
 \[
 \frac 1{1+\gamma_j \mathrm{e}^{2i\xi la_j} \mathrm{e}^{2i\xi l (a_2+\dots +a_{j-1})}\widetilde F_{j-1,1}(\xi)/E_{j-1,1}(\xi)}=\sum _{n\geq 0} (-\gamma_j \mathrm{e}^{2i\xi la_j})^n \Big(\frac { \mathrm{e}^{2i\xi l (a_2+\dots +a_{j-1})}\widetilde{F}_{j-1,1}(\xi)}{E_{j-1,1}(\xi)}\Big)^n.
 \]
 Since both factors $ \mathrm{e}^{2i\xi l (a_2+\dots +a_{j-1})} \widetilde F_{j-1,1}(\xi)$ and $1/E_{j-1,1}(\xi)$ admit a representation as the one in \eqref{rep.E} it follows that their  $n$-th power also has a such representation. Thus the term in the left hand side has the desired representation which finishes the proof.
\end{proof}

 For any $t,x \in \mathbb{R}$, we set
\begin{align} \label{ktht}
%k_t(x) & :=\frac{\mathrm{e}^{\frac{i|x|^2}{4t}}}{\sqrt{4\pi i t}}=\dfrac{1}{2 \pi} \int_{\mathbb{R}} \mathrm{e}^{-\mathrm{i} \xi^2 t} \mathrm{e}^{\mathrm{i} x \xi} \d \xi\\
h_t(x) & := \dfrac{1}{2 \pi} \int_{\mathbb{R}} \mathrm{e}^{-\mathrm{i} \xi^2 t} \mathrm{e}^{\mathrm{i} x \xi} \dfrac{1}{\overline{E}_{N-1,1}(\xi)} \d \xi.
\end{align}
In particular, when $N=2$, $h_t=k_t$, $k_t$ being the classical Schr\"odinger kernel.
%   and, $k_t$ as the free Schr\"{o}dinger kernel on $\mathbb{R}$
%\begin{align} \label{kt kernel}
%k_t(x)= \dfrac{1}{\sqrt{4 \pi \mathrm{i} t}} \mathrm{e}^{\mathrm{i} \frac{x^2}{4 t}}.
%\end{align}
\begin{lemma} \label{kernel piecewise new} 
Let $N\geq 2$ and $\alpha_k$, for $k=\overline{2,N}$ as in \eqref{alpha_k coefficient}.  The kernels $p_t^{1,k}$ given in \eqref{kernel piecewise} can be expressed as
\[
 p_t^{1,1}(x,y) = a_1  k_t(a_1 x -a_1 y) -  {a_1}  \sum_{i_{\square} \in \{ 0,1 \}} \tilde{c}_{i_{2},\dots,i_{N-1}} h_t [ a_1 x +a_1 y  - 2 l (i_2 a_2 + \ldots + i_{N-1} a_{N-1}) ],
\]
\[
p_t^{1,N}(x,y) =  {a_1} \alpha_{N} h_t [a_1 x - a_N (y -(N-2)l)- l (a_2 + a_3 + \ldots + a_{N-1})]
\]
and for $2\leq k\leq N-1$, $N\geq 3$, 
\begin{align*}
 p_t^{1,k}(x,y)=& {a_1}\alpha_{k} \sum_{i_{\square} \in \{ 0,1 \}} c_{i_{k+1},\dots,i_{N-1}} h_t \Big[ a_1 x- a_k( y -(k-1)  l )- l\sum_{j=2}^ka_j - 2 l \sum _{j=k+1}^{N-1} i_ja_j \Big] \\
 &-  {a_1}\alpha_{k}\sum_{i_{\square} \in \{ 0,1 \}} \tilde{c}_{i_{k+1},\dots,i_{N-1}} h_t\Big[a_1 x -a_k ((k-1)l-y) - l\sum_{j=2}^ka_j -2l \sum_{j=k+1}^{N-1} i_{j} a_{j}  \Big].
\end{align*}
\end{lemma}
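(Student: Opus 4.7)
\textbf{Plan of proof for Lemma \ref{kernel piecewise new}.} The strategy is direct substitution of the coefficients obtained in Lemma \ref{preliminary_coeff} into the kernel formula \eqref{kernel piecewise}, followed by bookkeeping of the resulting exponentials so that each summand becomes a single $h_t$ (or $k_t$) evaluated at an explicit argument.

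First, I will handle the simple end cases $k=1$ and $k=N$. For $k=1$, the $C^-_{11}$ contribution involves no factor of $1/\overline{E}_{N-1,1}$, so it is a pure Fourier integral equal to $a_1 k_t(a_1 x-a_1 y)$; for the $C^+_{11}$ part I substitute the expansion \eqref{ejftildejsums} of $\widetilde F_{N-1,1}(\xi)=\sum_{i_\square} \tilde c_{i_2,\dots,i_{N-1}}\mathrm{e}^{-2\mathrm{i}\xi l(i_2 a_2+\dots+i_{N-1}a_{N-1})}$, pull the finite sum outside the integral, absorb the exponential into the spatial phase, and recognise each term as $h_t$ at the shifted point. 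For $k=N$, the only nonzero piece is $C^-_{1N}$, and the only nontrivial manipulation is simplifying $1/(\overline{\lambda}_1\cdots \overline{\lambda}_{N-1})$ (see below).

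The core computation is the telescoping identity for the product of $\overline{\lambda}_j$'s. Using $\lambda_j(\xi)=\mathrm{e}^{-\mathrm{i}\xi\delta_j(j-1)l}$ with $\delta_j=a_j-a_{j+1}$, a short rearrangement gives
\begin{equation*}
\overline{\lambda}_1(\xi)\cdots \overline{\lambda}_{k-1}(\xi)=\exp\Bigl(\mathrm{i}\xi l\sum_{j=2}^{k-1}(j-1)(a_j-a_{j+1})\Bigr)=\exp\Bigl(\mathrm{i}\xi l\bigl[a_2+\dots+a_{k-1}-(k-2)a_k\bigr]\Bigr),
\end{equation*}
so that $1/(\overline{\lambda}_1\cdots \overline{\lambda}_{k-1})=\exp(\mathrm{i}\xi l[(k-2)a_k-a_2-\dots-a_{k-1}])$. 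This converts the product of $\overline{\lambda}$'s into a single shift in the phase.

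With these ingredients, for $2\le k\le N-1$ I substitute into the definition of $p_t^{1,k}$ the formulas of Lemma \ref{preliminary_coeff}, then replace $\overline{E}_{N-1,k}(\xi)$ and $\widetilde F_{N-1,k}(\xi)$ by their finite expansions from \eqref{ejftildejsums} (the coefficients $c_\square,\tilde c_\square$ are real since the recurrence \eqref{ejftildej} starts from $E_{k,k}=1,\widetilde F_{k,k}=\gamma_k$ with real $\gamma_j$, so complex conjugation only flips signs in the exponents). Interchanging the finite sum with the $\xi$-integral, each summand is
\begin{equation*}
a_1\alpha_k\,\frac{1}{2\pi}\int_{\mathbb R}\mathrm{e}^{-\mathrm{i}\xi^2 t}\,\mathrm{e}^{\mathrm{i}\xi\,\Theta}\,\frac{\mathrm{d}\xi}{\overline{E}_{N-1,1}(\xi)}=a_1\alpha_k\,h_t(\Theta),
\end{equation*}
where $\Theta$ is the sum of all phase contributions. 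A short algebraic simplification (combining $-a_k y+(k-2)la_k-l(a_2+\dots+a_{k-1})$ into $-a_k(y-(k-1)l)-l(a_2+\dots+a_k)$ for the $C^-_{1k}$ part, and similarly using the extra factor $\mathrm{e}^{-2\mathrm{i}\xi l(k-1)a_k}$ to form $-a_k((k-1)l-y)$ in the $C^+_{1k}$ part) yields exactly the two sums in the statement.

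The main obstacle is purely organisational: matching all the phases correctly, in particular the telescoping identity above and the sign convention $\overline{E}_{N-1,k}(\xi)=\sum c_{i_\square}\mathrm{e}^{-2\mathrm{i}\xi l(\,\cdot\,)}$ versus $\widetilde F_{N-1,k}(\xi)=\sum \tilde c_{i_\square}\mathrm{e}^{-2\mathrm{i}\xi l(\,\cdot\,)}$ inherited from \eqref{ejftildejsums}. Once the exponentials are written in a common form, the identification with $h_t$ (and, in the diagonal summand of $k=1$, with $k_t$) is immediate.
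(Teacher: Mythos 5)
Your proposal is correct and follows essentially the same route as the paper's proof: substituting the coefficients from Lemma \ref{preliminary_coeff} into \eqref{kernel piecewise}, evaluating the telescoping product $\lambda_1\cdots\lambda_{k-1}$ (equivalently $1/(\overline{\lambda}_1\cdots\overline{\lambda}_{k-1})$, since $|\lambda_j|=1$), expanding $\overline{E}_{N-1,k}$ and $\widetilde F_{N-1,k}$ via \eqref{ejftildejsums}, and identifying each resulting integral with $h_t$ (or $k_t$) at a shifted argument. The phase bookkeeping you outline, including the simplification of $-a_k y+(k-2)la_k-l(a_2+\dots+a_{k-1})$ into $-a_k(y-(k-1)l)-l\sum_{j=2}^k a_j$, matches the paper's computation exactly.
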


\begin{proof}
 In view of \eqref{kernel piecewise}, \eqref{ejftildejsums} and Lemma \ref{preliminary_coeff}, we can rewrite the kernel $p_t^{1,1}$ in terms of the functions $k_t$ and $h_t$ as
\begin{align*}
p_t^{1,1}(x,y) & =\int_{\mathbb{R}} \mathrm{e}^{- \mathrm{i} \xi^2 t} \big[ C^{(-)}_{11}(\xi) \mathrm{e}^{\mathrm{i} \xi (a_1 x - a_1 y)} + C^{(+)}_{11}(\xi) \mathrm{e}^{\mathrm{i} \xi (a_1 x + a_1 y)} \big] \d \xi \\
& = \dfrac{a_1}{2 \pi} \int_{\mathbb{R}} \mathrm{e}^{- \mathrm{i} \xi^2 t} \mathrm{e}^{\mathrm{i} \xi (a_1 x -a_1 y)} \d \xi - \dfrac{a_1}{2 \pi} \int_{\mathbb{R}} \mathrm{e}^{-\mathrm{i} \xi^2 t} \mathrm{e}^{\mathrm{i} \xi (a_1 x +a_1 y)}\frac{\widetilde{F}_{N-1,1}(\xi)}{\overline{E}_{N-1,1}(\xi) }\d\xi\\
& = {a_1} k_t(a_1 x -a_1 y) -  {a_1}  \sum_{i_{\square} \in \{ 0,1 \}} \tilde{c}_{i_{2},\dots,i_{N-1}} h_t ((a_1 x +a_1 y) - 2 l (i_2 a_2 + \ldots + i_{N-1} a_{N-1}) ).
\end{align*}
 In the case $2 \leq k \leq N-1$, by \eqref{kernel piecewise}, 
\begin{align*}
p_t^{1,k}(x,y) & =\int_{\mathbb{R}} \mathrm{e}^{- \mathrm{i} \xi^2 t} \big[ C^{(-)}_{1k}(\xi) \mathrm{e}^{\mathrm{i} \xi (a_1 x - a_k y)} + C^{(+)}_{1k}(\xi) \mathrm{e}^{\mathrm{i} \xi (a_1 x + a_k y)} \big] \d \xi = : I^{(-)}_{1,k}(t; x,y) + I^{(+)}_{1,k}(t; x,y).
\end{align*}
Let us first remark that 
\[
\lambda_1(\xi)\dots  \lambda_{k-1}(\xi)=\exp\Big({i\xi (-a_2\ldots-a_{k-1} +(k-2)a_k)}\Big), \ k\geq 2.
\]
 Let us treat first the integral $I^{(-)}_{1,k}(t; x,y)$. By Lemma \ref{preliminary_coeff} and \eqref{lambdamu}, we can rewrite it as 
\begin{align*}
& I^{(-)}_{1,k}(t; x,y) = \alpha_{k}   \int_{\mathbb{R}} \mathrm{e}^{- \mathrm{i} \xi^2 t} \mathrm{e}^{\mathrm{i} \xi (a_1 x -a_k y)}\lambda_1(\xi)\dots \lambda_{k-1}(\xi)\frac{\overline{E}_{N-1,k}(\xi)}{\overline{E}_{N-1,1}(\xi)}\d\xi 
\\
& =  {a_1}\alpha_{k} \sum_{i_{\square} \in \{ 0,1 \}} c_{i_{k+1},\dots,i_{N-1}} h_t \Big[ a_1 x- a_k y - l \sum _{j=2}^ka_j + (k-1) a_k l - 2 l \sum _{j=k+1}^{N-1} i_ja_j \Big].
\end{align*}
 In the case of $I^{(+)}_{1,k}(t; x,y)$, we similarly arrive to
\begin{align*}
& I^{(+)}_{1,k}(t; x,y) = - a_1 \alpha_{k} \int_{\mathbb{R}} \mathrm{e}^{- \mathrm{i} \xi^2 t} \mathrm{e}^{\mathrm{i} \xi a_1 x + a_k y} 
\lambda_1(\xi)\dots \lambda_{k-1}(\xi)\mathrm{e}^{- 2 \mathrm{i} \xi l a_k (k-1)}\frac{\widetilde{F}_{N-1,k}(\xi)}{\overline{E}_{N-1,1}(\xi)}\d\xi 
\\
& =  -  {a_1}\alpha_{k}\sum_{i_{\square} \in \{ 0,1 \}} \tilde{c}_{i_{k+1},\dots,i_{N-1}} h_t\Big[a_1 x +a_k y - l \sum_{j=2}^ka_j - (k-1) a_k l -2l \sum_{j=k+1}^{N-1} i_{j} a_{j}  \Big].
\end{align*}
Thus, for all $2 \leq k \leq N-1$ we can write $p_t^{1,k}$ as
\begin{align*}
 p_t^{1,k}(x,y) =& {a_1}\alpha_{k} \sum_{i_{\square} \in \{ 0,1 \}} c_{i_{k+1},\dots,i_{N-1}} h_t \Big[ a_1 x- a_k y - l \sum _{j=2}^ka_j + (k-1) a_k l - 2 l \sum _{j=k+1}^{N-1} i_ja_j \Big] \\
& -  {a_1}\alpha_{k}\sum_{i_{\square} \in \{ 0,1 \}} \tilde{c}_{i_{k+1},\dots,i_{N-1}} h_t\Big[a_1 x +a_k y - l \sum_{j=2}^ka_j - (k-1) a_k l -2l \sum_{j=k+1}^{N-1} i_{j} a_{j}  \Big]. 
\end{align*}
In the case of $p_t^{1,N}$, we have again by \eqref{kernel piecewise} and Lemma \ref{preliminary_coeff}
\begin{align*}
 p_t^{1,N}(x,y)  &=\int_{\mathbb{R}} \mathrm{e}^{- \mathrm{i} \xi^2 t} \big[ C^{-}_{1N}(\xi) \mathrm{e}^{\mathrm{i} \xi (a_1 x - a_N y)} + C^{+}_{1N}(\xi) \mathrm{e}^{\mathrm{i} \xi (a_1 x + a_N y)} \big] \, \d \xi \\
& = {a_1}\alpha_{N}   \int_{\mathbb{R}} \mathrm{e}^{- \mathrm{i} \xi^2 t} \mathrm{e}^{\mathrm{i} \xi (a_1 x -a_N y)} \lambda_1(\xi)\ldots \lambda_{N-1}(\xi)\frac{\d\xi}{E_{N-1,1}(\xi)}\\
%&\cdot \ldots \cdot \mathrm{e}^{-\mathrm{i} \xi l (a_{N-1} - a_N)(N-2)} \\
%& \quad \times \sum_{n_{\square} \geq 0} c_{n_{2},\dots,n_{N-1}} \mathrm{e}^{[- 2 \mathrm{i} \xi l (n_{2} a_{2} + \ldots + n_{N-1} a_{N-1})]} \d \xi \\
%& = (1-\gamma^2_1) \cdot \dots \cdotp (1-\gamma^2_{N-1}) \dfrac{a_1}{2 \pi} \int_{\mathbb{R}} \mathrm{e}^{- \mathrm{i} \xi^2 t} \mathrm{e}^{\mathrm{i} \xi (a_1 x -a_N y)} \mathrm{e}^{- \mathrm{i} \xi l (a_2 - a_3 )} \mathrm{e}^{-2 \mathrm{i} \xi l (a_3-a_4)} \cdot \ldots \cdot \mathrm{e}^{-\mathrm{i} \xi l (a_{N-1}-a_N) (N-2)} \\
%& \quad \times \sum_{n_{\square} \geq 0} c_{n_{2},\dots,n_{N-1}} \mathrm{e}^{[- 2 \mathrm{i} \xi l (n_{2} a_{2} + \ldots + n_{N-1} a_{N-1})]} \d \xi \\
%& = (1-\gamma^2_1) \cdot \dots \cdotp (1-\gamma^2_{N-1}) \dfrac{a_1}{2 \pi} \int_{\mathbb{R}} \mathrm{e}^{- \mathrm{i} \xi^2 t} \mathrm{e}^{\mathrm{i} \xi (a_1 x -a_N y)} \mathrm{e}^{-\mathrm{i} \xi l a_2} \mathrm{e}^{-\mathrm{i} \xi l a_3} \mathrm{e}^{-\mathrm{i} \xi l a_4} \cdot \ldots \cdot \mathrm{e}^{- \mathrm{i} \xi l a_{N-1}} \mathrm{e}^{i \xi l a_N (N-2)} \\
%& \quad \times \sum_{n_{\square} \geq 0} c_{n_{2},\dots,n_{N-1}} \mathrm{e}^{[- 2 \mathrm{i} \xi l (n_{2} a_{2} + \ldots + n_{N-1} a_{N-1})]} \d \xi \\
& =  {a_1}\alpha_{N} h_t \Big[(a_1 x- a_N y) - l (a_2 + a_3 + \ldots + a_{N-1}) + (N-2) a_N l \Big].
\end{align*}
This finishes the proof.
\end{proof}
Based on these new representations in Lemma \ref{kernel piecewise new} of the kernels in \eqref{kernel piecewise}, we can rewrite the solution $u$ expressed in \eqref{solution piecewise} in a more useful way.
\begin{lemma} \label{solution via just kt}
Let $u_0 \in L^2(\mathbb{R})$. There exists a function $\psi$, depending on $u_0$ and $(a_i)_{i=1}^N$, supported in $(0,\infty)$ such that for $t \neq 0$ and $x\leq0$, the solution of \eqref{piecewise} can be written as
\begin{equation}
\label{rep.u.psi}
u(t,x) =   \int_{\mathbb{R}} k_t(a_1x-y) u_0 \Bigg( \frac{y}{a_1} \Bigg) \mathbbm{1}_{(-\infty,0)}(y) \d y +  \int_{\mathbb{R}} h_t(a_1x-y) \psi(y) \d y.
\end{equation}
\end{lemma}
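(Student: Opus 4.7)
The strategy is to take the explicit representation \eqref{solution piecewise} of $u(t,x)$ for $x\le 0$, substitute into it the expressions for the three types of kernels $p_t^{1,1}$, $p_t^{1,k}$ ($2\le k\le N-1$), and $p_t^{1,N}$ furnished by Lemma \ref{kernel piecewise new}, and then in each resulting integral perform a linear change of variables that makes the argument of $k_t$ or $h_t$ into $a_1x-y$. One term will produce the free Schr\"odinger piece against $u_0(\cdot/a_1)\mathbbm 1_{(-\infty,0)}$; all remaining terms will assemble into a single convolution $\int h_t(a_1x-y)\psi(y)\d y$, and the point is that after each change of variable the new variable ranges only over $(0,\infty)$, whence $\psi$ is supported in $(0,\infty)$.

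Concretely, in the integral $\int_{-\infty}^0 p_t^{1,1}(x,y)u_0(y)\d y$ the first summand of $p_t^{1,1}$ produces, after the substitution $z=a_1 y$, exactly the first integral in \eqref{rep.u.psi}. The second summand of $p_t^{1,1}$ contributes, for each multi-index $(i_2,\dots,i_{N-1})\in\{0,1\}^{N-2}$, the change of variable $z=-a_1 y+2l(i_2a_2+\dots+i_{N-1}a_{N-1})$; since $y<0$ and the shift is nonnegative, $z$ runs over a subset of $(0,\infty)$, and this contributes to $\psi$.

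For $2\le k\le N-1$, in the integral over $y\in((k-2)l,(k-1)l)$, the first sum in the expression of $p_t^{1,k}$ is handled by the substitution $z=a_k(y-(k-1)l)+l\sum_{j=2}^ka_j+2l\sum_{j=k+1}^{N-1}i_ja_j$; since $y-(k-1)l\in(-l,0)$, one checks that $z\ge l\sum_{j=2}^{k-1}a_j\ge 0$, and in fact $z>0$ on the open interval of integration. The second sum is handled by $z=a_k((k-1)l-y)+l\sum_{j=2}^ka_j+2l\sum_{j=k+1}^{N-1}i_ja_j$, for which $z>l\sum_{j=2}^ka_j>0$. Finally, in the integral over $y\in((N-2)l,\infty)$, the substitution $z=a_N(y-(N-2)l)+l(a_2+\dots+a_{N-1})$ gives $z>l(a_2+\dots+a_{N-1})\ge 0$ (and $z>0$ strictly on the open interval). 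In every case the Jacobian is the constant $1/a_k$, so the resulting integrand (a finite linear combination of translates and rescalings of $u_0$ restricted to the respective interval, multiplied by the coefficients $\tilde c_{\cdot}$, $c_{\cdot}$ and $a_1\alpha_k$) defines a function in $L^2$ supported in $(0,\infty)$.

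Setting $\psi$ to be the sum of all these $h_t$-pieces yields \eqref{rep.u.psi}. The main bookkeeping obstacle is simply keeping track of the arguments so that one verifies both (i) the argument of $h_t$ becomes $a_1x-z$ in each case, and (ii) the new variable $z$ lies in $(0,\infty)$; the computations in Lemma \ref{kernel piecewise new} already bring all the arguments into the convenient form $a_1x - [\text{affine function of }y]$, so once one performs the linear substitution and checks the support, the conclusion is immediate. Integrability of $\psi$ follows from $u_0\in L^2(\mathbb{R})$ together with the fact that each substitution is a bijection onto its image with constant Jacobian $1/a_k$.
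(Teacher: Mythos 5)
Your proposal is correct and follows essentially the same route as the paper's proof: substitute the kernel formulas of Lemma \ref{kernel piecewise new} into the representation \eqref{solution piecewise}, perform the indicated linear changes of variables so that every $h_t$-argument becomes $a_1x-z$, and verify that each new variable $z$ ranges over $(0,\infty)$ so the resulting contributions assemble into a single $\psi$ supported on the positive half-line. The only (harmless) additions beyond the paper's argument are your explicit lower bounds for $z$ on each interval $I_k$ and the remark on integrability of $\psi$.
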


\begin{proof} We use  \eqref{solution piecewise} and   
Lemma \ref{kernel piecewise new}.
%
%By  \eqref{solution piecewise}, we have the following representation  for all $t\neq 0$ and $x \leq 0$: 
%\[
%  u(t,x)=\int_{-\infty}^0 p_t^{1,1}(x,y)u_0(y)  dy + \sum_{j=2}^{N-1} \int_{(j-2)l}^{(j-1)l} p_t^{1,j}(x,y) u_0(y) \, dy + \int_{(N-2)l}^{\infty} p_t^{1,N}(x,y)u_0(y)  dy.
%\]
Using the first term in the representation of $p^{1,1}_t$ in  Lemma \ref{kernel piecewise new} and a change of variables
$y\rightarrow y/a_1$ we obtain the first term in the right hand side of \eqref{rep.u.psi}:
\[
a_1\int_{-\infty}^0 k_t(a_1x-a_1y)u_0(y)\d y=\int_{-\infty}^0 k_t(a_1x-y) u_0 \Bigg( \frac{y}{a_1} \Bigg)\d y=\int_{\mathbb{R}} k_t(a_1x-y) u_0 \Bigg( \frac{y}{a_1} \Bigg) \mathbbm{1}_{(-\infty,0)}(y) \d y.
\]
We will prove now that all the other terms in the representation of $u$ are of the form $(h_t\ast \psi )(a_1x)$ for some function $\psi$ having the support in $(0,\infty)$. 

We remark that for any $1\leq k\leq N-1$ and $y\in I_k$ the new variable
\[
z=a_k ((k-1)l-y) + l\sum_{j=2}^ka_j +2l \sum_{j=k+1}^{N-1} i_{j} a_{j}
\]
runs over the positive real numbers. We use this change of variables to obtain the existence of a function $\psi$ depending on $u_0$ and all the parameters involved in the definition of variable $z$ to obtain that 
\begin{align*}
  \int_{I_k} & h_t\Big[a_1 x -a_k ((k-1)l-y) - l\sum_{j=2}^ka_j -2l \sum_{j=k+1}^{N-1} i_{j} a_{j}  \Big]u_0(y)\d y =  \int_ {0}^\infty h_t(a_1x-z)\psi(z)\d z.
\end{align*}
 We proceed in the same way with the terms containing $a_1x-a_ky$. For any $2\leq k\leq N$ and $y\in I_k$ the variable 
 \[
 z=a_k( y -(k-1)  l )+ l\sum_{j=2}^ka_j  + 2 l \sum _{j=k+1}^{N-1} i_ja_j
 \]
 runs over positive real numbers. Thus there exists a function $\psi$ such that
 \begin{align*}
\int _{I_k} &h_t \Big[ a_1 x- a_k( y -(k-1)  l )- l\sum_{j=2}^ka_j - 2 l \sum _{j=k+1}^{N-1} i_ja_j \Big] u_0(y)\d y= \int_ {0}^\infty h_t(a_1x-z)\psi(z)\d z.
\end{align*}
In all the cases we obtain that the integrals are of the form $(h_t\ast \psi)(a_1x)$ for some function $\psi$ supported on the positive axis. Summing all these functions we obtain the desired representation for the solution $u$.
\end{proof}

\begin{lemma} \label{vanishing on negatives}
Let  $u$ be a solution of \eqref{eq.sigma}, such that
\begin{equation*}
|u(0,x)|={O}(\mathrm{e}^{-\alpha x^2}), \quad |u(1,x)|={O}(\mathrm{e}^{-\beta x^2}), \text{ as } x \to -\infty,
\end{equation*}
for some $\alpha, \beta>0$ with $\sqrt{\alpha \beta} > a_1^2/4$. Then, 
$
u(t,x)=0$
for all $ t \in \mathbb{R}$ and $ x \leq 0.
$
\end{lemma}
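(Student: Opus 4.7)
The plan is to combine the explicit representation of the solution in Lemma \ref{solution via just kt} with the one-sided (nonnegative-shift) expansion of $1/\overline{E}_{N-1,1}(\xi)$ in Lemma \ref{denominator_expansion} so as to recast the restriction of $u$ to $x\le 0$ as the restriction of a genuine \emph{free} Schrödinger solution on the whole real line, and then invoke the one-sided Hardy uncertainty principle of Nazarov.

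\textbf{Step 1 (expansion of $h_t$).} By Lemma \ref{denominator_expansion} and the definition \eqref{ktht} of $h_t$,
\begin{equation*}
h_t(z)=\sum_{n_{\square}\ge 0} c_{n_2,\dots,n_{N-1}}\, k_t\!\Big(z-2l\sum_{j=2}^{N-1}n_j a_j\Big),
\end{equation*}
so $h_t$ is an $\ell^1$-combination of translates of the classical Schrödinger kernel $k_t$ by \emph{nonnegative} amounts.

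\textbf{Step 2 (reduction to a free Schrödinger solution).} Plugging this into Lemma \ref{solution via just kt} and using the change of variable $z=y+2l\sum n_j a_j$ in each translate, the fact that $\psi$ is supported in $(0,\infty)$ together with the nonnegativity of the shifts implies that
\begin{equation*}
\Psi(z):=\sum_{n_{\square}\ge 0} c_{n_2,\dots,n_{N-1}}\, \psi\!\Big(z-2l\sum_{j=2}^{N-1}n_j a_j\Big)
\end{equation*}
is supported in $(0,\infty)$, and belongs to $L^2(\mathbb{R})$ because $(c_{n_{\square}})\in\ell^1$ and $\psi\in L^2$. Hence, for $t\in\mathbb{R}$ and $x\le 0$,
\begin{equation*}
u(t,x)=\int_{\mathbb{R}} k_t(a_1 x-y)\,\Phi(y)\,dy,\qquad \Phi(y)=u_0(y/a_1)\mathbbm{1}_{(-\infty,0)}(y)+\Psi(y)\mathbbm{1}_{(0,\infty)}(y),
\end{equation*}
with $\Phi\in L^2(\mathbb{R})$. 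Define $v(t,z):=(k_t\ast\Phi)(z)$, the unique $L^2$-solution of the \emph{free} Schrödinger equation on $\mathbb{R}$ with data $\Phi$. By construction, $u(t,x)=v(t,a_1 x)$ for all $x\le 0$ and $t\in\mathbb{R}$.

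\textbf{Step 3 (transfer of decay and Nazarov's principle).} The rescaling $z=a_1 x$ (note $z\to -\infty$ iff $x\to -\infty$) converts the hypotheses into
\begin{equation*}
|v(0,z)|=O(\mathrm{e}^{-(\alpha/a_1^2)z^2}),\qquad |v(1,z)|=O(\mathrm{e}^{-(\beta/a_1^2)z^2}),\qquad z\to -\infty.
\end{equation*}
Via the standard identity $v(1,x)=(2\mathrm{i})^{-1/2}\mathrm{e}^{\mathrm{i} x^2/4}\,\widehat{(\mathrm{e}^{\mathrm{i}|\cdot|^2/4}\Phi)}(x/2)$, these one-sided bounds translate into one-sided Gaussian decay at $-\infty$ of a function $g$ and of $\hat g$ with exponents whose product equals $(\alpha/a_1^2)(4\beta/a_1^2)=16\alpha\beta/a_1^4$. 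The hypothesis $\sqrt{\alpha\beta}>a_1^2/4$ is exactly $16\alpha\beta/a_1^4>1$, i.e. the Nazarov threshold $>1/4$ for the product of the $L^\infty$-exponents. Thus $g\equiv 0$, hence $v\equiv 0$, and therefore $u(t,x)=v(t,a_1 x)=0$ for every $t\in\mathbb{R}$ and every $x\le 0$.

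\textbf{Main obstacle.} The core technical point is Step~2: verifying that the term-by-term interchange of summation and integration is legitimate and that the resulting density $\Psi$ genuinely lies in $L^2(\mathbb{R})$. Both follow from the $\ell^1$-summability of $(c_{n_{\square}})$ established in Lemma \ref{denominator_expansion} (via Wiener's theorem), combined with $\psi\in L^2$ inherited from $u_0\in L^2$ through the changes of variable in the proof of Lemma \ref{solution via just kt}. A secondary, more bookkeeping-type issue is matching the constants so that $\sqrt{\alpha\beta}>a_1^2/4$ is precisely the sharp threshold for Nazarov's one-sided principle in the Schrödinger formulation; this is routine but must be done carefully.
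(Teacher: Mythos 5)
Your proposal is correct and follows essentially the same route as the paper's own proof: the expansion \eqref{longht} of $h_t$ into nonnegative translates of $k_t$, the density $\Phi$ (the paper's $\eta$ in \eqref{eta}) which agrees with $u_0(\cdot/a_1)$ on $(-\infty,0)$ because $\psi$ and all the shifts live on the positive half-line, and the conclusion via Nazarov's one-sided principle after the rescaling $z=a_1x$. The only blemish is the arithmetic slip $(\alpha/a_1^2)(4\beta/a_1^2)=16\alpha\beta/a_1^4$, which should read $4\alpha\beta/a_1^4$; comparing this correct product with the Nazarov threshold $1/4$ still yields exactly the hypothesis $\sqrt{\alpha\beta}>a_1^2/4$, so the conclusion stands.
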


\begin{proof}
We use the representation in   Lemma \ref{denominator_expansion}, to write $h_t(x)$ as
\begin{equation} \label{longht}
\begin{aligned}
h_t(x) & = \dfrac{1}{ {2 \pi}} \int_{\mathbb{R}} \mathrm{e}^{-\mathrm{i} \xi^2 t} 
\mathrm{e}^{\mathrm{i} x \xi} \sum_{n_{\square} \geq 0} c_{n_{2},
 \dots,n_{N-1}} \mathrm{e}^{- 2 \mathrm{i} \xi l (n_{2} a_{2} + \ldots  
 + n_{N-1} a_{N-1})} \d \xi \\
 & = \sum_{n_{\square} \geq 0} c_{n_{2},
 \dots,n_{N-1}} k_t [ x -2l (n_{2} a_{2} + \ldots  
 + n_{N-1} a_{N-1}) ]
\end{aligned}
\end{equation}
 Using \eqref{rep.u.psi}, we have for $t\neq 0$ and $x \leq 0$, that $u(t,x)=(k_t\ast \eta)(a_1x)$
%\begin{align*}
%u(t,x)=  \int_{\mathbb{R}} k_t(a_1 x -y) \eta(y) \d y,
%\end{align*}
 with
\begin{align} \label{eta}
\eta(y)= u_0 \Bigg( \frac{y}{a_1} \Bigg) \mathbbm{1}_{(-\infty,0)}(y) +  \sum_{n_{\square} \geq 0} c_{n_{2},\dots,n_{N-1}}  \psi(y- 2 l (n_2 a_2+\ldots+n_{N-1}a_{N-1})) , \quad y \in \mathbb{R}.
\end{align}
Using the explicit representation of $k_t$ we have
\begin{align*}
u(t,x) & = \dfrac{1}{\sqrt{4 \pi \mathrm{i} t}} \mathrm{e}^{ \mathrm{i} \frac{a_1^2 x^2}{4t}} \int_{\mathbb{R}} \mathrm{e}^{- \mathrm{i} \frac{a_1 x y}{2 t}} \mathrm{e}^{\mathrm{i} \frac{y^2}{4 t}} \eta(y) \d y  = \dfrac{1}{\sqrt{2  \mathrm{i} t}} \mathrm{e}^{ \mathrm{i} \frac{a_1^2 x^2}{4t}} \widehat{ \mathrm{e}^{\mathrm{i} \frac{|\cdot|^2}{4 t}} \eta} \Bigg( \frac{a_1 x}{2 t} \Bigg).
\end{align*}
Since $|u(1,x)|={O}(\mathrm{e}^{-\beta x^2})$ as $x\rightarrow -\infty$ we obtain 
\begin{equation*}
| \widehat{ \mathrm{e}^{\mathrm{i} \frac{|\cdot|^2}{4 }} \eta} (x) |= {O}(\mathrm{e}^{-  \frac{4\beta x^2}{ a_1^2}}), \quad \mbox{as}\ x\rightarrow -\infty.
\end{equation*}
 Since $ \mathrm{supp} (\psi) \subseteq (0, \infty) $ and  $a_2,  \ldots, a_{N-1} >0$, we have for any $y \leq 0$ that $\eta(y)=u_0(y/a_1)$. The property $|u(0,x)|={O}(\mathrm{e}^{-\alpha x^2})$ as $x\rightarrow-\infty$ gives us that 
\[
 |\mathrm{e}^{\mathrm{i} \frac{|\cdot|^2}{4  }} \eta(x) | =  {O}(\mathrm{e}^{- \alpha \frac{x^2}{a_1^2}}),\quad \mbox{as}\ x\rightarrow -\infty.
\]
%Moreover,   we have for $x < 0$
%\begin{align*}
%|  \dfrac{1}{\sqrt{2  \mathrm{i} }} \mathrm{e}^{ \mathrm{i} \frac{a_1^2 x^2}{4}} \widehat{ \mathrm{e}^{\mathrm{i} \frac{|\cdot|^2}{4 }} \eta} \Bigg( \frac{a_1 x}{2 } \Bigg) | = |u(1,x)| \leq \mathrm{e}^{- \beta x^2},
%\end{align*}
% which implies 
%\begin{equation*}
%| \widehat{ \mathrm{e}^{\mathrm{i} \frac{|\cdot|^2}{4 }} \eta} (x) | \lesssim \mathrm{e}^{-  \frac{4\beta x^2}{ a_1^2}}.
%\end{equation*}
 Thus, by \cite[Theorem 2.3 (B)]{nazarov}, it follows that as long as $\sqrt{ \alpha \beta}> a_1^2/4 $ with $\alpha, \beta >0$, we must have $\eta \equiv 0 $ on $\mathbb{R}$,
%\begin{align*}
%\eta \equiv 0  \quad \text{ on } \mathbb{R},
%\end{align*}
 which implies $u(t,x)=0$, for $t\neq 0$ and $x \leq 0$, which completes the proof.
\end{proof}

We are ready to prove the main result of this section, Theorem \ref{piecewise}.

\begin{proof}[Proof of Theorem \ref{piecewise}] 
%Let us assume that $\sigma_-\leq \sigma_+$. We will use the fact that $ |u(0,x)|={O} (\mathrm{e}^{-\alpha x^2})$ and $ |u(1,x)|\lesssim {O} (\mathrm{e}^{-\beta x^2})$ as $x\rightarrow-\infty$.  The other case $\sigma_-\geq \sigma_+$
%can be reduced to this one by using the behaviour of $u(0,x)$, $u(1,x)$ for $x\rightarrow \infty$, making the transform $v(t,x)=u(t,-x)$ and solving the problem for $v$.
We prove the first part since the other two follow from the first one.
We will proceed by induction. For $N\geq 1$ let $P(N)$ be the statement: For any $a_1, \ldots, a_N >0$, if the solution $u_{\sigma_N}$ of the equation
\begin{equation*}
\left\{
\begin{array}{ll}
\mathrm{i} u _t(t,x)+ \partial_x (\sigma_N \partial_x u)(t,x)=0,&  x\in \mathbb{R}, t\neq 0 ,\\
u(0,x)=u_0(x),&   x\in \mathbb{R}
\end{array}
\right.
\end{equation*}
with the piecewise constant function $\sigma_N$ given by $\sigma_1(x)=a_1^{-2}$ if $N=1$ and for $N\geq 2$
$\sigma_N(x)=a_k^{-2}$, $x\in I_k$, $k=1,\dots, N$,
%\begin{equation*}
% \sigma_N(x)=
% \left\{
%  \begin{aligned}
%  	a_1^{-2}, & \quad x \in I_1:=(-\infty,0),\\
%  	a_2^{-2}, & \quad x \in I_2:=(0,l),\\
%  	a_3^{-2}, & \quad x \in I_3:=(l,2 l),\\
%  	\vdots \\
%  	a_N^{-2}, & \quad x \in I_N:=((N-2)l,+\infty),\\
%  \end{aligned}
%\right.
%\end{equation*}
satisfies for some positive numbers $\alpha$ and $\beta$ with $\alpha\beta > a_1^4/16$,
\begin{equation*} 
 u_{\sigma_N}(0,x)= {O}( \mathrm{e}^{-\alpha x^2}), \ u_{\sigma_N}(1,x) = {O}(\mathrm{e}^{-\beta x^2}), \text{ as } x \to -\infty,
\end{equation*}
 then $u_{\sigma_N}\equiv 0$. 

When $N=1$ let us consider  $u_{\sigma_1}$, solution of
\[
\mathrm{i} u _t(t,x)+ \frac{1}{a_1^2} \partial^2_{xx} u (t,x)=0,  x\in \mathbb{R}, t\neq 0
\]
%\begin{equation*}
%\left\{
%\begin{array}{ll}
%\mathrm{i} u _t(t,x)+ \frac{1}{a_1^2} \partial^2_{xx} u (t,x)=0,&  x\in \mathbb{R}, t\neq 0 ,\\
%u(0)=u_0,&   x\in \mathbb{R}
%\end{array}
%\right.
%\end{equation*}
 that satisfies 
$
 u(0,x) = {O}( \mathrm{e}^{-\alpha x^2})$ and $u(1,x)={O}(\mathrm{e}^{-\beta x^2})$ as $x \to -\infty$,
for some positive numbers $\alpha$ and $\beta$ with $\alpha\beta > {a_1^4}/{16}$. We consider $v(t,x)=u_{\sigma_1}(t,x/a_1)$ and apply the results for the one dimensional LSE  \cite[Theorem 2.3 (B)]{nazarov} to conclude that $u_{\sigma_1}\equiv 0$.

%This is true because we can express the solution as
%\begin{align*}
%u(t,x) & = \dfrac{a_1}{\sqrt{4 \pi \mathrm{i}} t } \int_{\mathbb{R}} \mathrm{e}^{\mathrm{i} \frac{a_1^2 |x-y|^2}{4t} } u_0(y) \d y = \dfrac{a_1}{\sqrt{2 \mathrm{i}} t } \mathrm{e}^{\mathrm{i} \frac{a_1^2 x^2}{4 t}} \widehat{\Bigg( \mathrm{e}^{\mathrm{i} \frac{a_1^2 | \cdot |^2}{4t}} u_0 \Bigg) }\Bigg( \frac{a_1^2 x}{ 2 t} \Bigg),
%\end{align*}
%
% and from hypothesis, we get that
%\begin{align*}
%|  \mathrm{e}^{\mathrm{i} \frac{a_1^2 | x |^2}{4}} u_0 (x) | = {O}( \mathrm{e}^{-\alpha x^2}) \text{ and } | \widehat{\Big( \mathrm{e}^{\mathrm{i} \frac{a_1^2 | \cdot |^2}{4}} u_0 \Big) } (\xi) | = {O} (\mathrm{e}^{-\beta \frac{4 \xi^2}{a_1^4}}), \  \text{ as } x \to -\infty,
%\end{align*}
%
% for some $\alpha, \beta>0$ s.t. $\alpha \beta > a_1^4/16$. Thus, by \cite[Theorem 2.3 (B)]{nazarov}, $u_{\sigma_1}\equiv 0$.

Assume now that $P(N)$, holds true, and we want to prove that $P(N+1)$ also holds true, i.e. we want to show that for any $a_1, \ldots, a_{N+1} >0$ and  $ \sigma_{N+1}(x)=a_k^{-2}$, $x\in I_k$, $k=1,\dots, N+1$, 
%\begin{equation} \label{sigma N+1}
% \sigma_{N+1}(x)=
% \left\{
%  \begin{aligned}
%  	a_1^{-2}, & \quad x \in I_1:=(-\infty,0),\\
%  	a_2^{-2}, & \quad x \in I_2:=(0,l),\\
%  	a_3^{-2}, & \quad x \in I_3:=(l,2 l),\\
%  	\vdots \\
%  	a_N^{-2}, & \quad x \in I_N:=((N-2)l,(N-1)l),\\
%  	a_{N+1}^{-2}, & \quad x \in I_{N+1}:=((N-1)l,+\infty),\\
%  \end{aligned}
%\right.
%\end{equation}
the solution $u_{\sigma_{N+1}}$ of the equation
\[
\mathrm{i} u _t(t,x)+ \partial_x (\sigma_{N+1} \partial_x u)(t,x)=0,\  x\in \mathbb{R}, t\neq 0
\]
%\begin{equation*}
%\left\{
%\begin{array}{ll}
%\mathrm{i} u _t(t,x)+ \partial_x (\sigma_{N+1} \partial_x u)(t,x)=0,&  x\in \mathbb{R}, t\neq 0 ,\\
%u(0)=u_0,&   x\in \mathbb{R}
%\end{array}
%\right.
%\end{equation*}
satisfying for some positive numbers $\alpha$ and $\beta$ with $\alpha\beta > a_1^4/16$
\begin{equation*} 
 u_{\sigma_{N+1}}(0,x)={O}( \mathrm{e}^{-\alpha x^2}), \ u_{\sigma_{N+1}}(1,x) = {O}(\mathrm{e}^{-\beta x^2}),  \text{ as } x \to -\infty,
\end{equation*}
vanishes identically,  $u_{\sigma_{N+1}}\equiv 0$. 

 Fix $a_1, \ldots, a_N, a_{N+1} >0$ and consider the corresponding piecewise constant function $\sigma_{N+1}$. Then, by Lemma \ref{vanishing on negatives} applied for $\sigma_{N+1}$, since $\alpha \beta > a_1^4 /16$, the solution $u_{\sigma_{N+1}}$ vanishes for all $t \in \mathbb{R}$ and $x \leq 0$. Then, one can check that $u_{\sigma_{N+1}}$ is solution to
\begin{equation*}
\left\{
\begin{array}{ll}
\mathrm{i} u _t(t,x)+ \partial_x (\widetilde{\sigma}_{N} \partial_x u)(t,x)=0,&  x\in \mathbb{R}, t\neq 0 ,\\
u(0)=u_0,&   x\in \mathbb{R}
\end{array}
\right.
\end{equation*}
with $\widetilde \sigma_N=a_2^{-2}$  when $N=1$ and  for $N\geq 2$
\begin{equation*}
 \widetilde{\sigma}_N(x)=
 \left\{
  \begin{aligned}
  	a_2^{-2}, & \quad x \in \tilde  I_1:=(-\infty, l),\\
  	a_3^{-2}, & \quad x \in\tilde I_2:=(l,2 l),\\
  	\vdots \\
  	a_{N+1}^{-2}, & \quad x \in\tilde I_N:=((N-1)l,+\infty).\\
  \end{aligned}
\right.
\end{equation*}
The translated function $v_{\sigma_N}(t,x):=u_{\sigma_{N+1}}(t,x+l)$ is solution of
\begin{equation}
\left\{
\begin{array}{ll}
\mathrm{i} v _t(t,x)+ \partial_x (\sigma_{N} \partial_x v)(t,x)=0,&  x\in \mathbb{R}, t\neq 0 ,\\
v(0,x)=u_0(x+l),&   x\in \mathbb{R}
\end{array}
\right.
\end{equation}
 with $ \sigma_N(x)=a_{k+1}^{-2}$, $x\in I_k$, $k=1,\dots, N$.
%\begin{equation*}
% \sigma_N(x)=
% \left\{
%  \begin{aligned}
%  	a_2^{-2}, & \quad x \in I_1:=(-\infty, 0],\\
%  	a_3^{-2}, & \quad x \in I_2:=(0, l),\\
%  	\vdots \\
%  	a_{N+1}^{-2}, & \quad x \in I_N:=((N-2)l,+\infty).\\
%  \end{aligned}
%\right.
%\end{equation*}
Since $u_{\sigma_{N+1}}$ vanishes for all $t \in \mathbb{R}$ and $x \leq 0$ function $v$ satisfies
\begin{equation*} 
 v_{\sigma_{N}}(0,x)=0, \ v_{\sigma_{N}}(1,x) =0,  \text{ for } x < -l.
\end{equation*}
Thus
\begin{equation*}
v_{\sigma_{N}}(0,x) ={O}(\mathrm{e}^{-\alpha x^2}), \ v_{\sigma_{N}}(1,x) = {O}(\mathrm{e}^{-\beta x^2}), \text{ as } x \to -\infty,
\end{equation*}
for any $\alpha$ and $\beta$ satisfying $\alpha\beta >0$, in particular, for some positive numbers $\alpha$ and $\beta$ satisfying $\alpha\beta > a_2^4/16$ and thus, by the induction hypothesis, $v_{\sigma_{N}}\equiv 0$. Finally, we get that $u_{\sigma_{N+1}} \equiv 0$ and, therefore, $P(N+1)$ also holds true. 

In order to complete the proof of Theorem \ref{piecewise theorem}, let us show that in the case of two steps piecewise-constant function $\sigma$, i.e. $N=2$, the exponents are sharp. More precisely, let
\begin{equation*}\label{sigma2piecewise}
 \sigma(x)=
 \left\{
  \begin{aligned}
  	a_1^{-2}, & \quad x \in I_1:=(-\infty,0),\\
  	a_2^{-2}, & \quad x \in I_2:=(0,\infty),\\
  \end{aligned}
\right.
\end{equation*}
with $a_1, a_2>0$. We note that when $N=2$  in view of \eqref{ktht} $h_t=k_t$. Using the representation of $u$ above (see also \cite{gaveauokada87}), the solution $u$ of system \eqref{eq.sigma} with $\sigma$ as above, can be written as
\begin{equation*}
u(t,x)=
\begin{cases}
(k_t\ast  \psi)(a_1 x), & x <0, \\
(k_t\ast  \widetilde\psi)(a_2 x), & x >0,
\end{cases}
\end{equation*} 
%\begin{equation*}
%u(t,x)=
%\begin{cases}
%\displaystyle{\int_{\mathbb{R}}} k_t(a_1 x - y) \psi(y) \ dy, & x \leq 0 \\[10pt]
%\displaystyle{\int_{\mathbb{R}}} k_t(a_2 x - y) \widetilde{\psi}(y) \ dy, & x >0
%\end{cases}
%\end{equation*} 
 with
\begin{equation*}
\begin{cases}
\psi(y)=u_0 \bigg( \dfrac{y}{a_1} \bigg) \mathbbm{1}_{(-\infty,0)}(y)+ \dfrac{a_2-a_1}{a_1+a_2}  u_0\Big(-\dfrac{y}{a_1} \Big) \mathbbm{1}_{(0,\infty)}(y)+ \dfrac{2 a_1}{a_1+a_2} u_0 \bigg( \dfrac{y}{a_2} \bigg) \mathbbm{1}_{(0, \infty)}(y), \\
\widetilde{\psi}(y)= \dfrac{2 a_2}{a_1+a_2} u_0 \bigg( \dfrac{y}{a_1} \bigg) \mathbbm{1}_{(-\infty,0)}(y) + u_0 \bigg( \dfrac{y}{a_2} \bigg) \mathbbm{1}_{(0,\infty)}(y) + \dfrac{a_1-a_2}{a_1+a_2} u_0 \bigg(-\dfrac{y}{a_2} \bigg) \mathbbm{1}_{(-\infty,0)}(y).
\end{cases}
\end{equation*}

 Taking as initial data
\begin{equation*}
u_0(x)=
\begin{cases}
\mathrm{e}^{-  a_1^2 x^2 - \mathrm{i} a_1^2 \frac{x^2}{4}}, & x \leq 0 \\
\mathrm{e}^{-  a_2^2 x^2 - \mathrm{i} a_2^2 \frac{x^2}{4}}, & x > 0
\end{cases},
\end{equation*}
the solution at $t=1$ can be written as
\begin{equation*}
u(1,x)=
\begin{cases}
\sqrt{\frac{2}{\mathrm{i}}} \mathrm{e}^{\mathrm{i} a_1^2 \frac{x^2}{4}} \widehat{\mathrm{e}^{- |\cdot|^2}} \big( \frac{2 a_1 x}{4} \big) , & x \leq 0 \\
\sqrt{\frac{2}{\mathrm{i}}} \mathrm{e}^{\mathrm{i} a_2^2 \frac{x^2}{4}} \widehat{\mathrm{e}^{-  |\cdot|^2}} \big( \frac{2 a_2 x}{4} \big), & x > 0
\end{cases}
\quad
=
\quad
\begin{cases}
\sqrt{\frac{4  }{\mathrm{i}}}  \mathrm{e}^{- \frac{a_1^2 x^2}{16  }+\mathrm{i} a_1^2 \frac{x^2}{4}}, & x \leq 0 \\
\sqrt{\frac{4  }{\mathrm{i}}}  \mathrm{e}^{- \frac{a_2^2 x^2}{16 }+\mathrm{i} a_2^2 \frac{x^2}{4}}, & x > 0
\end{cases},
\end{equation*}
and letting $\alpha =   \min \{ a_1^2, a_2^2\}$, one gets a nonzero solution  satisfying
\begin{equation*}
\ |u(0,x)|\lesssim \mathrm{e}^{-\alpha x^2}, \ |u(1,x)|\lesssim \mathrm{e}^{- \frac{\alpha}{16 } x^2}, \quad \text{ as } |x| \to \infty.
\end{equation*}
The proof is now complete.
\end{proof}

%%%%%%%%%%%%%%%%%%%%%%%%%%%%%%%%%%%%%%%%%%%%%%%%%%%%%%%%%%%%%%%%%%%%%%%%%%%%%%%%%%%%%%%%%%%%%%%%%%%%%%%%%%%%%%%%%%%%%%%%%%%%%%%%%%%%%%%%%%%%%%%%%%%%%%%%

\section{A Carleman Inequality}\label{carleman-tree}
Let $\Gamma$ be a star-shaped graph, $N$ the number of its edges and the critical exponent $\gamma_\Gamma$ as in \eqref{gamma}. In the following, we will obtain a Carleman inequality on $\Gamma$ on which we rely the proof of Theorem \ref{main-potential}. Let consider the set $\mathcal{Z}_{comp}$ defined by
\begin{align*}
\label{}
  \mathcal{Z}_{comp}=\Big\{{\bf{q}}=(q_j)_{j=\overline{1,N}}  \in C([0,T]\times \Gamma), \ q_j \in C^{1,2} ([0,T] \times [0, \infty)) \ \forall j=\overline{1,N} \ s.t.\\
  q_j(t,0)=q_l(t,0) \quad  \forall 1 \leq j,l \leq N, \quad 
\sum_{j=1}^N q_{j,x}(t,0)=0, \quad t \in [0,T] \quad \Big\}
\end{align*}
%
%
%Let $\mathcal{Z}_{comp}=\Big\{{\bf{q}}=(q_j)_{j=\overline{1,N}}  \in C([0,T]\times \Gamma), \ q_j \in C^{1,2} ([0,T] \times [0, \infty)) \ \forall j=\overline{1,N} \ s.t. $
%\begin{equation}
%q_j(t,0)=q_l(t,0) \quad  \forall 1 \leq j,l \leq N, \quad 
%\sum_{j=1}^N q_{j,x}(t,0)=0, \quad t \in [0,T] \quad \Big\}.
%\end{equation}
It is clear that $\mathcal{Z}_{comp}$ is densely embedded in $ C([0,T],D(\Delta_\Gamma))$.
Consider also the weight function $\bm{\varphi}=(\varphi_j)_{j=\overline{1,N}}$ given by
\begin{equation}
\varphi_j(t,x)=\mu |\alpha_j x + R t(1-t) |^2 - \frac{(1+\epsilon)R^2 t(1-t)}{16 \mu} \quad \forall j=\overline{1,N},
\end{equation}
for some $\mu>0, \epsilon>0, R>0$ and $\alpha_j \in \mathbb{R}$ for all $j \in \overline{1,N}$, such that
\begin{equation}\label{sum.derivatives.alpha}
\sum_{j=1}^N \varphi_{j,x} (t,0)=0,
\end{equation}
i.e., in terms of the vector $\bm{\alpha}=(\alpha_j)_{j=\overline{1,N}}$,
$
\sum_{j=1}^N \alpha_j=0.
$
Moreover, one can observe that
$
\varphi_j(0,t)=\varphi_l(0,t),
$
for any $1 \leq j, l, \leq N$ and $t \in [0,T]$, so the weight function $\bm{\varphi}$ belongs to $\mathcal{Z}_{comp}$.

In the proof of the Carleman inequality, we will make use of $N$ weights $(\bm{\varphi}^k)_{k=\overline{1,N}}$, with coefficients $(\bm{\alpha}^k)_{k=\overline{1,N}}$ such that

\begin{enumerate}[label=(\roman*)]

\item If $N$ is even, $\bm{\alpha}^1=(1,-1,\dots,1,-1)
$ and $\bm{\alpha}^k$ \text{ is a cyclic permutation of }  $\bm{\alpha}^{k-1}$, for all  $k= 2,\dots, N$.
%
%\begin{equation}
%\bm{\alpha}^1=(1,-1,\dots,1,-1)
%\begin{pmatrix}
%1 \\
%-1 \\
%\vdots \\
%1\\
%-1
%\end{pmatrix}, \ \text{and } \bm{\alpha}^k \text{ is a cyclic permutation of } \bm{\alpha}^{k-1}, \ \forall \ k=\overline{2,N};
%\end{equation}
\item If $N=2m+1$, $\bm{\alpha}^1=(\underbrace{-1,\dots,-1}_{m+1}, \underbrace{\frac{m+1}m,\dots, \frac{m+1}m}_{m})$ and $\bm{\alpha}^k$ \text{ is a cyclic permutation of }  $\bm{\alpha}^{k-1}$, for all  $k= 2,\dots, N$.

%
%\begin{equation}
%\bm{\alpha}^1=
%  \begin{array}{c@{\!\!}l}
%  \left(\begin{array}[c]{l}
%    -1 \\
%    \vdots  \\
%    -1 \\
%    \frac{m+1}{m} \\
%    \vdots  \\
%    \frac{m+1}{m} \\
%  \end{array} \right)
%&
% \begin{array}[c]{@{}l@{\,}l}
%   \left. \begin{array}{c} \vphantom{0}  \\ \vphantom{\vdots}
%   \\ \vphantom{0} \end{array} \right\} & \text{$m+1$ times} \\
%\left. \begin{array}{c} \vphantom{0} \\ \vphantom{\vdots}
%   \\ \vphantom{0}  \end{array} \right\} & \text{$m$ times} \\
%\end{array}
%\end{array}, \ \text{and } \bm{\alpha}^k \text{ is a cyclic permutation of } \bm{\alpha}^{k-1},
%\end{equation}
%$\ \forall \ k=\overline{2,N}.$
\end{enumerate}
The particular form the of the vectors $\bm{\alpha}^k$ satisfies the following  properties that will be used in the proof of a Carleman inequality:
\[
\sum_{k}\alpha_j^k=0, \quad\forall j=1,\dots, N,
\]
and the sum
$
\sum_{k}(\alpha_j^k)^2 $ \text{is independent on} $j$.

\begin{lemma}(Carleman Inequality)
	\label{lemma-carleman}
	Let us consider the weights introduced above. The following inequality 
	\begin{equation}
\label{ineg-carleman}
\frac{R^2\epsilon}{8\mu}  \sum _{k=1}^N \|\mathrm{e}^{\bm{\varphi} ^k} \bm{q}\|^2_{L^2([0,1]\times \Gamma)}\leq \sum _{k=1}^N \| \mathrm{e}^{\bm{\varphi} ^k} (\partial _t +\mathrm{i} \Delta_\Gamma)\bm{q}\|^2_{L^2([0,1]\times \Gamma)}
\end{equation}
holds for all $\epsilon>0$, $\mu>0$, $R>0$ and $\bm{q}\in \mathcal{Z}_{comp}$.
\end{lemma}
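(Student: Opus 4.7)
The plan is to adapt the classical symmetric/skew-adjoint decomposition of Escauriaza--Kenig--Ponce--Vega to the graph setting, the novelty being the handling of the boundary contributions at the central vertex, for which the specific structure of the weights $\bm{\varphi}^k$ is essential. For each $k\in\{1,\ldots,N\}$ I would set $\bm{f}^k := \mathrm{e}^{\bm{\varphi}^k}\bm{q}$ and compute the conjugated operator, which on each edge reads
\begin{equation*}
L_{\varphi^k} := \mathrm{e}^{\varphi^k}(\partial_t + \mathrm{i}\partial_{xx})\mathrm{e}^{-\varphi^k} = \partial_t - \varphi_t^k + \mathrm{i}\partial_{xx} - 2\mathrm{i}\varphi_x^k\partial_x - \mathrm{i}\varphi_{xx}^k + \mathrm{i}(\varphi_x^k)^2.
\end{equation*}
I would then split $L_{\varphi^k}=\mathcal{S}^k+\mathcal{A}^k$ with $\mathcal{S}^k=-\varphi_t^k-2\mathrm{i}\varphi_x^k\partial_x-\mathrm{i}\varphi_{xx}^k$ self-adjoint and $\mathcal{A}^k=\partial_t+\mathrm{i}\partial_{xx}+\mathrm{i}(\varphi_x^k)^2$ skew-adjoint on $L^2([0,1]\times\Gamma)$; the matching conditions $\varphi_j^k(t,0)=\varphi_l^k(t,0)$, $\sum_j\varphi_{j,x}^k(t,0)=0$ built into $\bm{\varphi}^k\in\mathcal{Z}_{comp}$ together with the Kirchhoff conditions on $\bm{q}$ legitimise these adjoint computations on the graph. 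The standard identity
\begin{equation*}
\|L_{\varphi^k}\bm{f}^k\|^2 = \|\mathcal{S}^k\bm{f}^k\|^2+\|\mathcal{A}^k\bm{f}^k\|^2+\langle\bm{f}^k,[\mathcal{S}^k,\mathcal{A}^k]\bm{f}^k\rangle
\end{equation*}
then reduces the proof to bounding $\langle\bm{f}^k,[\mathcal{S}^k,\mathcal{A}^k]\bm{f}^k\rangle$ from below and summing over $k$.

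A direct computation yields, on each edge,
\begin{equation*}
[\mathcal{S}^k,\mathcal{A}^k] = \varphi_{tt}^k+4(\varphi_x^k)^2\varphi_{xx}^k-\varphi_{xxxx}^k+2\mathrm{i}\varphi_{xxt}^k+(4\mathrm{i}\varphi_{xt}^k-4\varphi_{xxx}^k)\partial_x-4\varphi_{xx}^k\partial_{xx},
\end{equation*}
which simplifies drastically because $\varphi_j^k$ is quadratic in $x$: one has $\varphi_{j,xxx}^k=\varphi_{j,xxxx}^k=\varphi_{j,xxt}^k=0$ and $\varphi_{j,xx}^k=2\mu(\alpha_j^k)^2\geq 0$. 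Setting $w_j^k=\alpha_j^k x+Rt(1-t)$ and completing the square in $w_j^k$, using that the explicit definitions of $\bm{\alpha}^k$ give $|\alpha_j^k|\geq 1$ for all $j,k$, produces the pointwise lower bound
\begin{equation*}
\varphi_{j,tt}^k+4(\varphi_{j,x}^k)^2\varphi_{j,xx}^k \geq \frac{\epsilon R^2}{8\mu}.
\end{equation*}
Integration by parts of $-4\varphi_{xx}^k\partial_{xx}$ produces the non-negative interior contribution $4\int_0^1\!\int_\Gamma\varphi_{xx}^k|\partial_x\bm{f}^k|^2$ plus boundary data at $x=0$, and symmetrising $4\mathrm{i}\varphi_{xt}^k\partial_x$ leaves only boundary terms since $\varphi_{xxt}^k=0$.

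The main obstacle, and the very reason several distinct weights rather than a single one are needed, is showing that the aggregate boundary contributions at the central vertex vanish (or have the correct sign) once summed over $k$. The boundary terms generated at $x=0$ have the schematic form $\sum_j(\alpha_j^k)^2\bar{f}_j^k(t,0)f_{j,x}^k(t,0)$, $\sum_j\alpha_j^k|f_j^k(t,0)|^2$ and pairings involving $\varphi_{j,x}^k(t,0)=2\mu Rt(1-t)\alpha_j^k$. Exploiting that $\bm{f}^k(t,0)$ is itself independent of the edge index (because both $q_j(t,0)$ and $\mathrm{e}^{\varphi_j^k(t,0)}$ are), the Kirchhoff condition $\sum_j q_{j,x}(t,0)=0$, the single-weight identity $\sum_j\alpha_j^k=0$, and crucially the summed identities $\sum_k\alpha_j^k=0$ for every $j$ together with $\sum_k(\alpha_j^k)^2$ being independent of $j$, I would verify that $\sum_{k=1}^N\sum_{j=1}^N(\text{boundary contribution at edge }j\text{ for weight }k)$ vanishes. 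Combining this cancellation with the uniform bulk bound and discarding the remaining non-negative quantities $\|\mathcal{S}^k\bm{f}^k\|^2+\|\mathcal{A}^k\bm{f}^k\|^2+4\int\!\int\varphi_{xx}^k|\partial_x\bm{f}^k|^2$ and summing over $k$ produces
\begin{equation*}
\frac{R^2\epsilon}{8\mu}\sum_{k=1}^N\|\mathrm{e}^{\bm{\varphi}^k}\bm{q}\|_{L^2([0,1]\times\Gamma)}^2\leq\sum_{k=1}^N\|\mathrm{e}^{\bm{\varphi}^k}(\partial_t+\mathrm{i}\Delta_\Gamma)\bm{q}\|_{L^2([0,1]\times\Gamma)}^2,
\end{equation*}
as desired.
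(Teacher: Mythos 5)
Your overall architecture coincides with the paper's: conjugate the operator, split into symmetric and skew-symmetric parts, bound the commutator from below in the bulk, and use the algebraic identities of the family $(\bm{\alpha}^k)_k$ (namely $\sum_j\alpha_j^k=0$, $\sum_k\alpha_j^k=0$, $\sum_k(\alpha_j^k)^2$ independent of $j$, $|\alpha_j^k|\ge 1$) together with the Kirchhoff conditions and the continuity of $q_j(t,0)$ and $\varphi^k_j(t,0)$ to control the vertex contributions. That part of your sketch is sound, with the caveat that the paper does not show the vertex terms vanish after summation: several of them (for instance the one proportional to $\sum_k\sum_j(\varphi^k_{j,x}(0))^3$) are strictly positive when $N$ is odd and are only shown to be nonnegative, so your hedge ``vanish (or have the correct sign)'' is doing real work there.

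There is, however, a genuine gap in your treatment of the first-order part of the commutator. You claim that ``symmetrising $4\mathrm{i}\varphi^k_{xt}\partial_x$ leaves only boundary terms since $\varphi^k_{xxt}=0$.'' This is false: writing $\bar f f_x=\tfrac12(|f|^2)_x+\mathrm{i}\,\Im(\bar f f_x)$, only the first piece integrates to boundary data; the real part of $\int 4\mathrm{i}\varphi_{xt}\bar f f_x$ is the bulk quantity $4\Im\int\varphi_{xt}f\bar f_x$, which has no sign and, since $\varphi^k_{j,xt}=2\mu\alpha^k_jR(1-2t)$, is of the same order as the main term. It cannot be discarded, and because you also discard the gradient term $4\int\varphi_{xx}|f_x|^2$ as merely nonnegative, nothing in your argument absorbs it; your pointwise bound $\varphi_{tt}+4(\varphi_x)^2\varphi_{xx}\ge R^2\epsilon/(8\mu)$ is therefore applied to the wrong coefficient. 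The correct procedure, and the one the paper follows, is to keep the gradient term and complete the square,
\[
4\varphi_{xx}|f_x|^2+4\,\Im\big(\varphi_{xt}f\bar f_x\big)=\Big|2\sqrt{\varphi_{xx}}\,f_x-\frac{\mathrm{i}\varphi_{xt}}{\sqrt{\varphi_{xx}}}f\Big|^2-\frac{\varphi_{xt}^2}{\varphi_{xx}}|f|^2,
\]
so that the zeroth-order coefficient becomes $\varphi_{tt}+4\varphi_x^2\varphi_{xx}-\varphi_{xt}^2/\varphi_{xx}$. The lemma survives because $\varphi_{xt}^2/\varphi_{xx}=2\mu R^2(1-2t)^2$ is exactly the extra piece sitting inside $\varphi_{tt}$, after which completing the square in $\alpha^k_jx+Rt(1-t)$ yields $\tfrac{R^2}{8\mu}\big(1-(\alpha^k_j)^{-4}+\epsilon\big)\ge\tfrac{R^2\epsilon}{8\mu}$ using $|\alpha^k_j|\ge1$; but as written your argument does not establish the stated inequality.
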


\begin{proof}
%By density it is sufficient to consider the case when  $\bm{q}\in \mathcal{Z}_{comp}$.
	Writing explicitly the above norms we will prove that 
	\[
	\frac{R^2\epsilon}{8\mu}  \sum _{k}\sum _{j=1}^N \int_0^1\int_0^\infty |\mathrm{e}^{\varphi_j ^k(x,t)} q_j(t,x)|^2 \d x\d t \leq \sum _{k} \sum _{j=1}^N \int_0^1\int_0^\infty | \mathrm{e}^{\varphi_j ^k(x,t)} (\partial _t +\mathrm{i} \partial_{xx})q_j(t,x)|^2 \d x\d t.
	\]
	
In the following, for the sake of reading we will not make precise the time dependence unless it si necessary.   
Following \cite{ignatpazotorosier}, for each $k \in \{1, \dots, N \}$, we denote  $\bu^k:=\mathrm{e}^{\bm{\varphi}^k} \bm{q},$
%\begin{equation}
%\bu^k:=\mathrm{e}^{\bm{\varphi}^k} \bm{q},
%\end{equation}
and
\begin{equation}
\bm{w}^k:=\mathrm{e}^{\bm{\varphi}^k} (\partial_t + \mathrm{i} \Delta_{\Gamma}) \bm{q}= \mathrm{e}^{\bm{\varphi}^k} (\partial_t + \mathrm{i} \Delta_{\Gamma}) \mathrm{e}^{-\bm{\varphi}^k} \bm{u}^k.
\end{equation}
Then,
\begin{align}
\sum_{k} \| \bm{w}^k  \|_{L^2([0,1]\times \Gamma)} \geq \sum_{k} \sum_{j=1}^N 4 \int_0^1\int_0^{\infty} \varphi^k_{j,xx} | u^k_{j,x} |^2 \d x\d t + 4 \Im\Big(\int_0^1 \int_0^{\infty} \varphi^k_{j,xt} u^k_j \overline{u}^k_{j,x} \d x\d t \Big) \\
+ \int_0^1 \int_0^{\infty} |u^k_j|^2 [  -(\varphi^k_{j,4x} - \varphi^k_{j,tt}) + 4 (\varphi^k_{j,x})^2 \varphi^k_{j,xx} ] \d x\d t + BT(0),
\end{align}
where the boundary term at $x=0$ is given by 
\begin{align}
BT(0) &=2 \sum_{k} \sum_{j=1}^{N} \int_0^1 \varphi^k_{j,x}(0) | u^k_{j,x} (0) |^2  \d t + \sum_{k} \sum_{j=1}^{N} \int_0^1 (-\varphi^k_{j,3x}(0)+ 2 (\varphi^k_{j,x}(0)^3))|\bu(0)|^2 \d t \\
&+ 2 \sum_{k} \sum_{j=1}^{N} \int_0^1 \varphi^k_{j,xx}(0) \Re(\bu(0) \overline{u}^k_{j,x}(0)) \d t+ 2 \sum_{k} \sum_{j=1}^{N} \int_0^1 \varphi^k_{j,t}(0) \Re(-\mathrm{i} \bu(0) \overline{\bu}^k_{j,x}(0)) \d t\\
&+ \sum_{k} \sum_{j=1}^{N} \int_0^1 \mathrm{i} \varphi^k_{j,x}(0) [ \bu(0) \overline{\bu}_t(0) - {\bu}_t(0) \overline{\bu}(0)] \d t=: J_1+J_2+J_3+J_4+J_5,
\end{align}
with $\bu(0)=u^k_j(0,t)$ and $\bm{\varphi}(0)=\varphi^k_j(0,t),
$
%\begin{equation}
%\bu(0)=u^k_j(0,t), \quad {\varphi}(0)=\varphi^k_j(0,t),
%\end{equation}
and similarly for the times derivatives.

In view of property \eqref{sum.derivatives.alpha} we immediatelly obtain that $J_5=0$. We now proceed with the other terms.
 For the first one we have
\begin{align}
J_1 =& 2 \sum_{k} \sum_{j=1}^N \int_0^1 \varphi^k_{j,x}(0) | \varphi^k_{j,x} (0) q_j(0) + q_{j,x}(0) |^2 \mathrm{e}^{2 \varphi^k_j (0)} \d t \\
=& 2 \sum_{k} \sum_{j=1}^N \int_0^1 (\varphi^k_{j,x}(0))^3 | \bu(0) |^2 \d t + 2 \sum_{k} \sum_{j=1}^N \int_0^1 \varphi^k_{j,x}(0) \mathrm{e}^{2\varphi^k_{j}(0)} |q_{j,x}(0)|^2 \d t \\
&+ 4 \sum_{k} \sum_{j=1}^N \int_0^1 \varphi^k_{j,x}(0) \Re(\varphi^k_{j,x}(0)q_j(0)\overline{q}_{j,x}(0)) \mathrm{e}^{2 \varphi (0)} \d t \\
%&= 2 \sum_{k} \sum_{j=1}^N \int_0^1 \varphi^k_{j,x}(0) | \varphi^k_{j,x} (0) q_j(0) + q_{j,x}(0) |^2 \mathrm{e}^{2 \varphi^k_j (0)} \,dt \\
=& 2\int_0^1  | \bu(0) |^2   \sum_{k} \sum_{j=1}^N (\varphi^k_{j,x}(0))^3\d t + 2 \int_0^1 \mathrm{e}^{2\bm{\varphi} (0)} \sum_{k} \sum_{j=1}^N  \varphi^k_{j,x}(0) |q_{j,x}(0)|^2 \d t \\
&+ 4 \Re \int_0^1 \bu(0)  \mathrm{e}^{\bm{\varphi} (0)}\sum_{j=1}^N   \overline{q}_{j,x}(0)  \sum_{k} (\varphi^k_{j,x}(0))^2 \d t.
\end{align}
%\begin{equation}
%\begin{aligned}
%=& 2\int_0^1  | \bu(0) |^2   \sum_{k} \sum_{j=1}^N (\varphi^k_{j,x}(0))^3\,dt + 2 \int_0^1 \mathrm{e}^{2\varphi (0)} \sum_{k} \sum_{j=1}^N  \varphi^k_{j,x}(0) |q_{j,x}(0)|^2 \,dt \\
%&+ 4 \Re \int_0^1 \bu(0)  \mathrm{e}^{\varphi (0)}\sum_{j=1}^N   \overline{q}_{j,x}(0)  \sum_{k} (\varphi^k_{j,x}(0))^2 \,dt.
%\end{aligned}
%\end{equation}
We prove that the last two term vanish. Using that
$
\sum_{k} \varphi^k_{j,x}(0)=0$  for all $j=1,\dots, N$,
we obtain that the second one vanishes.
 Since ${\sum_{k}} (\varphi^k_{j,x}(0))^2$ is independent of $j$ and $\sum_{j=1}^N q_{j,x}(0)=0$, the last term in the above right hand side is zero. 
 %We show now that the reming term in $J_1$ equals $J_2$ and it is nonnegative.
% Using that
%\begin{equation}
%\sum_{k} \varphi^k_{j,x}(0)=0, \quad \forall j=\overline{1,N},
%\end{equation}
%we obtain that
%\begin{equation}
%\sum_{k} \sum_{j=1}^N \int_0^1 \varphi^k_{j,x}(0) \mathrm{e}^{2 \varphi(0)} | q_{j,x}(0) |^2 \,dt=0.
%\end{equation}
%Thus,
%\begin{equation}
%J_1=2  \sum_{k} \sum_{j=1}^N \int_0^1 (\varphi^k_{j,x}(0))^3 |\bu(0)|^2 \,dt.
%\end{equation}
 For any $j=1,\dots,N$,  $\sum_{k} (\varphi^k_{j,x}(t,0))^3= A(t)$ where $A(t)\geq 0$. In particular, when $N$ is even $A(t)=0$. This gives us that $J_1\geq 0$.
% 
% 
% \begin{equation}
%\sum_{k} (\varphi^k_{j,x}(t,0))^3= A(t)\begin{cases}
%\geq 0, & \text{ for any } j=\overline{1,N} \text{ and } N \text{ odd}, \\
%=0, &  \text{ for any } j=\overline{1,N} \text{ and } N \text{ even}.
%\end{cases}
%\end{equation} 
% Notice that this is not quite relevant since 
%\begin{equation}
%u^k_j(0)=u^k_l(0)=:\bu(0).
%\end{equation}
% We must impose that
%\begin{equation}
%\sum_{k} \sum_{j=1}^N (\varphi^k_{j,x}(0))^3 \geq 0.
%\end{equation}
In the case of $J_2$ we use that all the third order derivatives of $\varphi^k$ vanish and we have $J_2=J_1\geq 0$.
%that
%\begin{equation}
%J_2=2 \sum_{k} \sum_{j=1}^N \int_0^1 (\varphi^k_{j,x}(0))^3 |\bu(0)|^2 \, dt
%\end{equation}
%and $J_1+J_2=2J_2\geq 0$.
%\begin{equation}
%J_1+J_2=4 \sum_{k} \sum_{j=1}^N \int_0^1 (\varphi^k_{j,x}(0))^3 |\bu(0)|^2 \, dt \geq 0.
%\end{equation}
 In the case of $J_3$ we use that 
\begin{align}
J_3&=\sum_{k} \sum_{j=1}^N \int_0^1 \varphi^k_{j,xx}(0)  \Re[\bu(0) \mathrm{e}^{\varphi (0)}  ( \varphi^k_{j,x}(0) \overline{q}_{j}(0) +   \overline{q}_{j,x}(0) )  ]\d t = \\
&= \sum_{k} \sum_{j=1}^N \int_0^1 \varphi^k_{j,xx}(0) \Re[|\bu(0)|^2 \varphi^k_{j,x}(0) + \bu(0)  \mathrm{e}^{\varphi (0)}\overline{q}_{j,x}(0)   ]\d t.
\end{align}
 Since $\varphi^{k}_j(0)=\varphi^{\tilde{k}}_{\tilde{j}}(0)$, for any $k,j,\tilde{k},\tilde{j} \in \{1, \dots, N \}$ and $\sum_k \varphi^k_{j,xx}(0)$ does not depend on $j$, we have that the sum of the last term vanishes and 
%\begin{equation}
%\sum_{k} \sum_{j=1}^N \varphi^k_{j,xx}(0) \Re[\overline{q}_{j,x}(0) \bu(0) \mathrm{e}^{\varphi (0)}] = 0.
%\end{equation}
 therefore,
\begin{equation}
\begin{aligned}
J_3 &=\int_0^1 |\bu(0)|^2  \sum_{k} \sum_{j=1}^N \varphi^k_{j,xx}(0)  \varphi^k_{j,x}(0) \d t
%&=2 \int_0^1  |\bu(0)|^2 \sum_{k} \sum_{j=1}^N  \mu (\alpha^k_j)^2 2\mu (\alpha^k_j) R t(1-t) \,dt \\
=4 R\mu^2 \int_0^1 |\bu(0)|^2   t(1-t)\d t  \sum_{k} \sum_{j=1}^N (\alpha^k_j)^3 \geq 0.
\end{aligned}
\end{equation}
 Denoting by $\varphi_t(t,0)$ the common value of $\varphi^k_{j,t}(t,0)$, $1\leq j,k\leq N$ we get
\begin{align*}
J_4 &=2 \sum_{k} \sum_{j=1}^N \int_0^1 \varphi_t(t,0) \Re [-\mathrm{i} \bu(0) ( \varphi^k_{j,x}(0) \overline{q}_j(0)+ \overline{q}_{j,x}(0) ) \mathrm{e}^{\varphi (0)}] \d t \\
&= 2 \sum_{k} \sum_{j=1}^N \int_0^1 \varphi_t(t,0) \Re [-\mathrm{i} \bu(0) \varphi^k_{j,x}(0) \overline{\bu}(0)- \mathrm{i} \bu(0) \overline{q}_{j,x}(0)  \mathrm{e}^{\varphi (0)}] \d t \\
&= 2 N \int_0^1 \varphi_t(t,0) \mathrm{e}^{\varphi(0)}\Re [- \mathrm{i} \bu(0) \sum_{j=1}^N\overline{q}_{j,x}(0)  ] \d t=0,
\end{align*}
 where we used the fact that $\varphi^k_{j}$ are real valued functions.
% \begin{equation}
%\varphi^k_{j,t}(0)=\varphi^{\tilde{k}}_{\tilde{j},t}(0)=:\bm{\varphi}_t(0) \quad \text{ and } \varphi^k_{j}(0)=\varphi^{\tilde{k}}_{\tilde{j}}(0)=:\bm{\varphi}(0)
%\end{equation} 
%for all $k,j,\tilde{k},\tilde{j} \in \{1, \dots, N \}$, we have that
%
%\begin{equation}
%\begin{aligned}
%J_4 &=2 \varphi_t(0) \mathrm{e}^{\varphi(0)} \sum_{k} \sum_{j=1}^N \int_0^1 \Re(-\mathrm{i} \bu(0) \overline{q}_{j,x}(0)) \,dt.
%\end{aligned}
%\end{equation}
%
% Now since $\sum_{j=1}^N q_{j,x}(0)=0$,
%\begin{equation}
%J_4=0.
%\end{equation}
%
% Recall that $\sum_{j=1}^N \varphi_{j,x}(0) =0$, and hence we also have that 
%\begin{equation}
%J_5=0.
%\end{equation}.
The above estimates show that $BT(0) \geq 0$ and therefore
\begin{align*}
\sum_k \| \bm{w}^k \|^2_{L^2([0,1]\times \Gamma)} & \geq \sum_{k} \sum_{j=1}^N \int_0^1 \int_0^{\infty} 4\varphi^k_{j,xx} |u^k_{j,x}|^2 \d x\d t + 4 \Im \int_0^1 \int_0^{\infty} \varphi^k_{j,xt} u^k_{j} \overline{u}^k_{j,x} \d x\d t \\
&+ \sum_{k} \sum_{j=1}^N \int_0^1 \int_0^{\infty} |u^k_j|^2 [ -\varphi^k_{j,4x} + \varphi^k_{j,tt}+4 (\varphi^k_{j,x})^2 \varphi^k_{j,xx} ] \d x\d t.
\end{align*}
Notice that for all $k$ and $j$ in  $\{1, \dots, N \}$, $\varphi^k_{j,4x}(x,t)=0$ and $\varphi^k_{j,xx}(x,t)=2 \mu (\alpha^k_j)^2$.
%\begin{equation}
%\varphi^k_{j,4x}(x,t)=0,\
%\varphi^k_{j,xx}(x,t)=2 \mu (\alpha^k_j)^2.
%\end{equation}
Then, we make squares and we can write
\begin{align}
\sum_k \| w^k \|^2_{L^2([0,1]\times \Gamma)} &\geq \sum_k \sum_{j=1}^{N} \int_0^1 \int_0^{\infty} \Big| 2 \sqrt{\varphi^k_{j,xx}} u^k_{j,x} - \dfrac{{\rm{i}}\varphi^k_{j,xt}}{\sqrt{\varphi^k_{j,xx}}} u^k_j \Big|^2 \d x \d t  \\
&+ \sum_k \sum_{j=1}^{N} \int_0^1 \int_0^{\infty}\Big[\varphi^k_{j,tt}+4\varphi_{j,xx}^k (\varphi^k_{j,x})^2 - \dfrac{(\varphi^k_{j,xt})^2}{ \varphi^k_{j,xx}} \Big]|u^k_j|^2 \d x \d t.
\end{align}
From the definition of $u^k_j$ we get that
\begin{align*}
\varphi^k_{j,tt}+4\varphi_{j,xx}^k &(\varphi^k_{j,x})^2 - \dfrac{(\varphi^k_{j,xt})^2}{ \varphi^k_{j,xx}} \\
&= 32 (\alpha^k_j)^4 \mu^3 (\alpha^k_j x + R t (1-t))^2 - 4 R \mu (\alpha^k_j x +R t (1-t)) + \dfrac{R^2 (1+\epsilon)}{8 \mu}\\
&= 32 (\alpha^k_j)^4 \mu^3\Big(\alpha^k_j x + R t (1-t) - \dfrac{R}{16 \mu^2 (\alpha^k_j)^4}\Big)^2 - \dfrac{R^2}{8 \mu (\alpha^k_j)^4} + \dfrac{R^2 (1 + \epsilon)}{8 \mu}\\
&\geq \dfrac{R^2}{8 \mu}\Big(1 - \dfrac{1}{(\alpha^k_j)^4} + \epsilon\Big) \geq \dfrac{R^2 \epsilon}{8 \mu},
\end{align*}
where the last inequality holds due to the fact that $|\alpha^k_j| \geq 1$, for all $k$ and $j$ in $\{1, \dots, N \}$.
Finally this implies that
\begin{equation}
\sum_k \| w^k \|^2_{L^2([0,1]\times \Gamma)} \geq \dfrac{R^2 \epsilon}{8 \mu} \sum_k \sum_{j=1}^{N} \int_0^1 \int_0^{\infty} |u^k_j|^2 \d x \d t,
\end{equation}
which concludes the result.
\end{proof}

\section{Proof of Theorem \ref{main-potential}}\label{potential-tree}

Before proving Theorem \ref{main-potential}, we need to study the behavior of a solution of the Schr\"odinger equation
\begin{align}\label{eq.treeg}
\left\{
\begin{array}{l}
\bu_t=i(\Delta_\Gamma+\bv(t,x))\bu \quad \text{in}\ [0,1]\times \Gamma,\\
\bu(0)=\bu_0,   x\in \Gamma.
\end{array}
\right.
\end{align}
in the star-shaped graph $\Gamma$ with Gaussian decay at $t=0$ and $t=1$. More precisely, we need to show that such a solution has Gaussian decay at any time in between. 

Through this section, we will denote by $\|\cdot\|_2$ and $\|\cdot\|_\infty$ the $L^2(\Gamma)$ and $L^\infty(\Gamma)$ norms.

\begin{theorem}\label{thm56}
Assume that $u$ in $C([0,1],L^2(\Gamma))$ verifies \eqref{eq.treeg}, $\bv(t,x)=\bv_1(x)+\bv_2(t,x)$ where $\bv_1$ is real-valued, $\|\bv_1\|_\infty\le M_1$ and that there are two positive numbers $\alpha$ and $\beta$ such that
\[
\|\mathrm{e}^{\alpha|x|^2}\bu(0)\|_2,\ \|\mathrm{e}^{\beta|x|^2}\bu(1)\|_2,\ \text{and }\sup_{[0,1]}\|\mathrm{e}^{\frac{\alpha\beta|x|^2}{(\sqrt{\alpha} t+(1-t)\sqrt{\beta})^2}}\bv_2(t)\|_\infty<+\infty.
\] 
Then, there is a constant $\mathcal{N}=\mathcal{N}(\alpha,\beta)$ such that
\[
\|\mathrm{e}^{\frac{\alpha\beta|x|^2}{(\sqrt{\alpha} t+(1-t)\sqrt{\beta})^2}}\bu(t)\|_2\le \mathrm{e}^{\mathcal{N}(M_1+M_2+M_1^2+M_2^2)}\|\mathrm{e}^{\alpha|x|^2}\bu(0)\|_2^{\frac{\sqrt{\beta}(1-t)}{\sqrt{\alpha} t+\sqrt{\beta}(1-t)}}\|\mathrm{e}^{\beta|x|^2}\bu(1)\|_2^{\frac{\sqrt{\alpha} t}{\sqrt{\alpha} t+\sqrt{\beta}(1-t)}},
\]
when $0\le t\le 1,\ M_2=\sup_{[0,1]}\|\mathrm{e}^{\frac{\alpha\beta|x|^2}{(\sqrt{\alpha} t+(1-t)\sqrt{\beta})^2}}\bv_2(t)\|_\infty \mathrm{e}^{2\sup_{[0,1]}\|\Im \bv_2(t)\|_\infty}$. Moreover
\[
\|\sqrt{t(1-t)}\mathrm{e}^{\frac{\alpha\beta|x|^2}{(\sqrt{\alpha} t+(1-t)\sqrt{\beta})^2}}\nabla\bu\|_{L^2([0,1]\times\Gamma)}\le \mathcal{N}\mathrm{e}^{\mathcal{N}(M_1+M_2+M_1^2+M_2^2)}\left[ \|\mathrm{e}^{\alpha|x|^2}\bu(0)\|_2+\|\mathrm{e}^{\beta|x|^2}\bu(1)\|_2 \right].
\]
\end{theorem}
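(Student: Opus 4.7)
The plan is to adapt the logarithmic convexity method of Escauriaza--Kenig--Ponce--Vega to the star-shaped quantum graph, exploiting the fact that the natural interpolating Gaussian weight is compatible with the Kirchhoff coupling. Introduce
\[
\varphi(t,x)=\frac{\alpha\beta\, x^{2}}{(\sqrt{\alpha}\,t+(1-t)\sqrt{\beta})^{2}},
\]
which satisfies $\varphi(0,x)=\alpha x^{2}$, $\varphi(1,x)=\beta x^{2}$, and on every edge $j$, $\varphi_{j,x}(t,0)=0$. In particular $\sum_{j}\varphi_{j,x}(t,0)=0$, so that $\varphi$ lies in the class $\mathcal{Z}_{comp}$ of Section~\ref{carleman-tree}, and no spurious contribution appears at the vertex when integrating by parts against it.

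After a standard truncation and regularization of $\bu$ to justify the subsequent manipulations, set $\bm{f}=e^{\varphi}\bu$ and substitute into the equation. The transformed evolution of $\bm{f}$ can be written as $\partial_{t}\bm{f}=\mathcal{S}\bm{f}+\mathcal{A}\bm{f}+\mathrm{i}\bv\bm{f}$, where, with respect to the $L^{2}(\Gamma)$ inner product, $\mathcal{S}=\mathcal{S}^{*}$ is the real symmetric operator produced by $\varphi_{t}$ and the symmetrization of $-2\mathrm{i}\nabla\varphi\cdot\nabla$, while $\mathcal{A}=-\mathcal{A}^{*}$ gathers $\mathrm{i}\Delta_{\Gamma}$ together with a purely imaginary multiplication term. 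The self-adjointness of $\mathcal{S}$ and the skew-adjointness of $\mathcal{A}$ on $\Gamma$ use, at the vertex, exactly the continuity of $\bu$, the Kirchhoff flux condition $\sum_{j}u_{j,x}(t,0)=0$, and the identity $\varphi_{j,x}(t,0)=0$, in close analogy with the boundary-term analysis of $BT(0)$ carried out in the proof of Lemma~\ref{lemma-carleman}. The object to control is
\[
H(t)=\|\bm{f}(t)\|_{L^{2}(\Gamma)}^{2}=\|e^{\varphi(t)}\bu(t)\|_{L^{2}(\Gamma)}^{2}.
\]

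Differentiating twice and using the equation, the skew-symmetric contribution cancels in $H'$, and from $H''$ one obtains, modulo perturbations of order $M_{1}+M_{2}+M_{1}^{2}+M_{2}^{2}$, the key inequality
\[
H''(t)\geq 2\bigl(([\mathcal{S},\mathcal{A}]+\partial_{t}\mathcal{S})\bm{f},\bm{f}\bigr)_{L^{2}(\Gamma)}+c\,t(1-t)\|\partial_{x}\bm{f}(t)\|_{L^{2}(\Gamma)}^{2}-K\,H(t).
\]
The real-valued $\bv_{1}\in L^{\infty}$ produces a bounded perturbation, whereas $\bv_{2}$ is absorbed by the bound $\|\bv_{2}\bm{f}\|_{2}\leq \|e^{\varphi}\bv_{2}\|_{\infty}\|\bm{f}\|_{2}\leq M_{2}H(t)^{1/2}$, which is precisely what the hypothesis on $\bv_{2}$ furnishes. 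The algebraic miracle behind the method is that the choice $\varphi(t,x)=c(t)x^{2}$ with $c(t)=\alpha\beta(\sqrt{\alpha}t+(1-t)\sqrt{\beta})^{-2}$ satisfies exactly the ODE that makes the operator $[\mathcal{S},\mathcal{A}]+\partial_{t}\mathcal{S}$ nonnegative; this is the graph transcription of the classical EKPV log-convexity identity on $\mathbb{R}$. One concludes that $(\log H)''(t)\geq -C(M_{1}+M_{2}+M_{1}^{2}+M_{2}^{2})$, from which the interpolation bound on $\|e^{\varphi(t)}\bu(t)\|_{2}$ follows by the convexity of a quadratic lower perturbation of $\log H$, and the gradient estimate follows by integrating back the nonnegative $t(1-t)\|\partial_{x}\bm{f}\|^{2}$ term.

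The main obstacle is the careful handling of vertex boundary contributions at every step. On $\mathbb{R}$ these terms vanish at infinity; on $\Gamma$ each integration by parts on an edge leaves, besides the decay-at-infinity terms (killed by the Gaussian weight $e^{\varphi}$), a vertex contribution involving $f_{j}(t,0)$, $f_{j,x}(t,0)$, $\varphi_{j,x}(t,0)$ and their time derivatives. These must be shown to sum to zero across the edges by combining the continuity of $\bu$ at the vertex, the flux condition $\sum_{j}u_{j,x}(t,0)=0$, and the identity $\varphi_{j,x}(t,0)=0$. Unlike in Lemma~\ref{lemma-carleman}, a single weight $\varphi$ is used here rather than the symmetrized family $\bm{\varphi}^{k}$, so cyclic-sum cancellations are unavailable and each vertex contribution must vanish or carry a favorable sign on its own, which is the delicate point. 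Once this bookkeeping is done, the argument parallels its real-line analogue and delivers both the weighted $L^{2}$ and weighted gradient estimates.
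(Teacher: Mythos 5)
Your proposal follows exactly the route the paper takes: the authors prove Theorem \ref{thm56} by declaring that it ``follows very closely'' the logarithmic-convexity argument of Escauriaza--Kenig--Ponce--Vega on the real line and omit all details, and your outline is precisely that adaptation, correctly built around the function $H(t)=\|\mathrm{e}^{\varphi(t)}\bu(t)\|_{L^2(\Gamma)}^2$ with the interpolating quadratic weight. You moreover supply the one genuinely graph-specific observation the paper leaves implicit --- that $\varphi_{j,x}(t,0)=0$ and $\varphi_j(t,0)$ is edge-independent, so $\mathrm{e}^{\varphi}\bu$ inherits the continuity and Kirchhoff conditions and all vertex boundary terms cancel --- so the sketch is sound.
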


The proof of this Theorem follows very closely the proof of the result in the real line, given in \cite{MR2443923}, and therefore we skip the details.

\begin{proof}[Proof of Theorem \ref{main-potential}] 
Using the Appell transform (see Section \ref{sec-Appell transform}) we can consider the case $\alpha=\beta=\gamma>2\gamma^2_\Gamma$.
%
%We consider $\widetilde{\bu}$ the Appell Transform of $\bu$ with $\gamma=\sqrt{\alpha\beta}$, and $\widetilde{\bm{V}}$ the corresponding associated potential so that we have a solution $\widetilde{\bu}=(\widetilde{u}_1, \dots, \widetilde{u}_N)$ of 
%\begin{equation}
%\left\{
%\begin{array}{lll}
%\tilde u _t+ \mathrm{i} \partial_{xx} \tilde u_{j}=\mathrm{i} \widetilde{V_j} \widetilde{u_j} ,& \forall j=\overline{1,N}, x\in [0,\infty), t\neq 0 ,\\[10pt]
%\widetilde{u}_j(t,0) = \widetilde{u}_l(t,0), & \forall j \neq l,  t >0,\\[10pt]
%\sum_{j=1}^N \widetilde{u}_{j,x}(t,0)=0,
%\end{array}
%\right.
%\end{equation}
% such that 
%\begin{equation}
%\| \mathrm{e}^{\gamma |x|^2 } \widetilde{\bu}(0) \|_2 + \| \mathrm{e}^{\gamma |x|^2} \widetilde{\bu}(1) \|_2 < \infty.
%\end{equation}
% For convenience, we will write $\bu$ instead of $\widetilde{\bu}$ and $\bm{V}$ instead of $\widetilde{\bm{V}}$. 
 The subsequent formal computations are justified by Theorem \ref{thm56}.
 Since $\gamma>2\gamma_\Gamma^2$, we can choose $\mu>1/2$ and $\epsilon>0$ such that 
\begin{equation}\label{condition on mu}
\dfrac{(2 \gamma_\Gamma)^2 (1+\epsilon)^{3/2}}{2 (1- \epsilon)^3} < (2 \gamma_\Gamma)^2 \mu \leq \dfrac{\gamma}{1+\epsilon},
\end{equation}
 and the smooth functions $\theta_M$ and $\eta_R$, for $M \gg R>2$, verifying
\begin{equation}
\theta_M(x)= \begin{cases}
1, & x \in [0,M]\\
0, & x \in (2M,\infty)
\end{cases}, 0\leq \theta_M\leq 1,
\end{equation}
\begin{equation}
\eta_R(t)= \begin{cases}
1, & t \in [\frac{1}{R}, 1- \frac{1}{R}]\\
0, & t \in [0, \frac{1}{2R}] \cup [1-\frac{1}{2R}, 1]
\end{cases}, \ 0\leq \eta_R\leq 1.
\end{equation}
 We define the space-time truncation of $\bu$ 
\begin{equation}
q_j(t,x)=\theta_M(x) \eta_R(t) u_j(t,x),
\end{equation}
 and since $\bm{q}=(q_j)_{j=\overline{1,N}} \in \mathcal{Z}_{comp}$, we can use the previous Carleman estimate.
 Note also that
\begin{equation}
\begin{aligned}
(\partial_t+ \mathrm{i} \partial_{xx})q_j = \theta_M \eta_R (\partial_t + \mathrm{i} \partial_{xx}) u_j + \eta'_R \theta_M u_j + (\theta''_M \eta_R u_j + 2 \theta'_M \eta_R u_{j,x}).
\end{aligned}
\end{equation}
 Therefore, in view of the Carleman estimates \eqref{ineg-carleman} 
\begin{equation}\label{3terms}
\begin{aligned}
\frac{R^2\epsilon}{8\mu}  \sum _{k=1}^N \|\mathrm{e}^{\bm{\varphi} ^k} \bm{q}\|^2_{L^2([0,1]\times \Gamma)} \leq  & \sum _{k=1}^N \sum_{j=1}^N \int_0^1 \int_0^{\infty}  | \mathrm{e}^{\varphi^k_j} V_j q_j |^2 + | \mathrm{e}^{\varphi^k_j} \eta'_R \theta_M u_j |^2 \d x \d t \\
&+ \sum _{k=1}^N \sum_{j=1}^N \int_0^1 \int_0^{\infty} | \mathrm{e}^{\varphi^k_j} \eta_R(\theta''_M u_j + 2 \theta'_M u_{j,x}) |^2 \d x\d t.
\end{aligned}
\end{equation}
 Since 
\begin{equation}
 \sum _{k=1}^N \sum_{j=1}^N \int_0^1 \int_0^{\infty}  |\mathrm{e}^{\varphi^k_j} V_j q_j  |^2 \d x \d t \leq \|  \bm{V} \|^2_{\infty} \sum _{k=1}^N \sum_{j=1}^N \int_0^1 \int_0^{\infty} |\mathrm{e}^{\varphi^k_j} q_j |^2 \d x \d t,
\end{equation}
 taking $R \geq 2 \sqrt{\dfrac{8 \mu}{\epsilon} } \| \bm{V} \|_\infty$, this term can be absorbed in the left-hand side. In the following computations, all the constants involved may depend on the behavior of the solution at times $t=0$ and $t=1$, the potential, and also on the parameters $\gamma, \mu$ or $\epsilon$, but they will not depend on $R$ and $M$.

Since the integrand in the second term is in fact supported in $x \in [0, 2M]$ and $t \in \bigg[\dfrac{1}{2R},\dfrac{1}{R}\bigg] \bigcup \bigg[1-\dfrac{1}{R}, 1-\dfrac{1}{2R}\bigg]$ by \eqref{condition on mu} we have that for such $x$ and $t$
%(\textcolor{red}{choosing eventually $R>2$})
\begin{align}
\varphi^k_j(&t,x)  \leq \mu [(\alpha^k_j)^2 x^2 + R^2 t^2 (1-t)^2 + 2 \alpha^k_j x R t (1-t)]\\
& \leq (\alpha^k_j)^2 \mu (1+\epsilon) x^2 + R^2 t^2 (1-t)^2 \mu \bigg(1+\dfrac{1}{\epsilon}\bigg) \leq \max_{k,j} |\alpha^k_j|^2 \mu (1+\epsilon)x^2 + \dfrac{\gamma}{(2 \gamma_\Gamma)^2 \epsilon}.
\end{align}
Since $|\alpha_j^k|\leq 2\gamma_\Gamma$ the second inequality in \eqref{condition on mu} gives us that 
\[
\varphi^k_j(t,x)  \leq \gamma x^2 + \dfrac{\gamma}{(2 \gamma_\Gamma)^2 \epsilon}.
\]
%
%\\
%& \leq \begin{cases}
%\mu (1+\epsilon)x^2 +\dfrac{\gamma}{(2 \gamma_\Gamma)^2 \epsilon}, & N=2m \\
%(\dfrac{m+1}{m})^2 \mu (1+\epsilon)x^2 +\dfrac{\gamma}{(2 \gamma_\Gamma)^2 \epsilon}, & N=2m+1
%\end{cases} \\
%& = (2 \gamma_\Gamma)^2 \mu (1+\epsilon) x^2 + \dfrac{\gamma}{(2 \gamma_\Gamma)^2 \epsilon} \\
%& \leq \gamma x^2 + \dfrac{\gamma}{(2 \gamma_\Gamma)^2 \epsilon}.
%\end{aligned}
%\end{equation}
 Hence, we can estimate all the terms uniformly in $k$ and obtain that
\begin{equation}
\begin{aligned}
 \sum _{k=1}^N\sum_{j=1}^N \int_0^1 \int_0^{\infty} | \mathrm{e}^{\varphi^k_j} \eta'_R \theta_M u_j |^2 \d x \d t &\leq N R^2 \mathrm{e}^{\frac{2 \gamma}{(2 \gamma_\Gamma)^2 \epsilon}} \sum_{j=1}^N \int_0^1 \int_0^{\infty} | \mathrm{e}^{\gamma x^2} u_j |^2 \d x \d t\\
& \lesssim R^2 \sup_{t \in [0,1]} \| \mathrm{e}^{\gamma x^2} \bu(t) \|^2_{2}.\end{aligned}
\end{equation}
 Taking into account that the integrand in the last term of \eqref{3terms} is supported now in $x \in [M,2M]$ and $t \in \bigg[ \dfrac{1}{2R}, 1- \dfrac{1}{2R} \bigg]$, similarly as before we get
\begin{equation}
\begin{aligned}
\varphi^k_j(x,t) &\leq (\alpha^k_j)^2 \mu (1+\epsilon) x^2 + R^2 t^2 (1-t)^2 \mu \bigg(1+\dfrac{1}{\epsilon}\bigg) \\
& \leq \gamma x^2 + \dfrac{R^2}{16} \mu \bigg( 1 + \dfrac{1}{\epsilon} \bigg)  \leq \gamma x^2 + \dfrac{R^2 \gamma}{16 (2 \gamma_\Gamma)^2 \epsilon}.
\end{aligned}
\end{equation}
Therefore,
\begin{align*}
\sum _{k=1}^N \sum_{j=1}^N &\int_0^1 \int_0^{\infty} | \mathrm{e}^{\varphi^k_j} \eta_R (\theta''_M u_j + 2 \theta'_M u_{j,x}) |^2  \d x \d t \\
 &\leq \dfrac{N}{M^2} \mathrm{e}^{\frac{2 R^2 \gamma}{16 (2 \gamma_\Gamma)^2 \epsilon}} \sum_{j=1}^N \int_{\frac{1}{2R}}^{1-\frac{1}{2R}} \int_0^{\infty}  | \mathrm{e}^{\gamma x^2}(u_j+u_{j,x}) |^2 \d x \d t\\
& \leq \dfrac{C_1}{M^2}  \mathrm{e}^{C_2 R^2} \bigg[ \sum_{j=1}^N \int_{\frac{1}{2R}}^{1-\frac{1}{2R}} \int_0^{\infty} | \mathrm{e}^{\gamma x^2} u_j |^2 \d x\d t +  \sum_{j=1}^N \int_{\frac{1}{2R}}^{1-\frac{1}{2R}} \int_0^{\infty} | \mathrm{e}^{\gamma x^2} u_{j,x} |^2 \d x\d t \bigg]\\
& \leq \dfrac{C_1}{M^2}  \mathrm{e}^{C_2 R^2} \bigg[ \sup_{t \in [0,1]} \| \mathrm{e}^{\gamma x^2} \bu(t)\|_2^2 + R^2 \int_{\frac{1}{2R}}^{1-\frac{1}{2R}} \int_0^{\infty} t (1-t) | \mathrm{e}^{\gamma x^2} u_{j,x} |^2 \d x\d t \bigg].
\end{align*}
Hence, thanks to Theorem \ref{thm56} the last term on \eqref{3terms} is bounded by 
\begin{equation}
\sum _{k=1}^N\sum_{j=1}^N \int_0^1 \int_0^{\infty} | \mathrm{e}^{\varphi^k_j} \eta_R(\theta''_M u_j + 2 \theta'_M u_{j,x}) |^2 \d x\d t \leq \dfrac{C_1}{M^2} R^2 \mathrm{e}^{C_2 R^2} \mathcal{K}.
\end{equation}
 Gathering now all these estimates, we have that
\begin{equation}
\dfrac{R^2 \epsilon}{8 \mu} \sum _{k=1}^N \sum_{j=1}^N \| \mathrm{e}^{\varphi^k_j}q_j \|_{L^2([0,1]\times \Gamma)} ^2 \leq C R^2 + \dfrac{C_1 R^2}{M^2} \mathrm{e}^{C_2 R^2} \mathcal{K.}
\end{equation}
 On the other hand for each $j$ we can find a $k$ such that $\alpha_j^k=-1$. By discarding all the other values of $k$, 
\begin{equation}
\sum _{k=1}^N \sum_{j=1}^N \| \mathrm{e}^{\varphi^k_j} q_j \|_{L^2([0,1]\times \Gamma)} ^2 \geq \sum_{j=1}^N \| \mathrm{e}^{\mu| - x + R t (1-t) |^2 - (1+\epsilon) \frac{R^2 t (1-t}{16 \mu}}  q_j\|_{L^2([0,1]\times \Gamma)} ^2.
\end{equation}
If $x \in \bigg[0, \dfrac{\epsilon (1-\epsilon)^2 R}{4} \bigg]$ and $t \in \bigg[ \dfrac{1-\epsilon}{2}, \dfrac{1+\epsilon}{2} \bigg]$, then
$
\theta_M=1$ for  $M \gg R$ and 
$
\eta_R=1$ for  $ {1}/{R} < (1-\epsilon)/{2}.
$
 Moreover, in this region, $|-x+Rt(1-t)|>Rt(1-t)-x > R(1-\epsilon)^3/4$, so
\begin{equation}
\mu (- x + R t (1-t))^2 - \dfrac{(1+\epsilon) R^2 t (1-t)}{16 \mu} \geq \dfrac{1}{4} \dfrac{R^2}{ 16 \mu} (4 \mu^2 (1-\epsilon)^6 -(1+\epsilon)^3)>0,
\end{equation}
 since $\mu > \dfrac{(1+\epsilon)^{3/2}}{2(1-\epsilon)^3}$. Hence, there exists a constant $C_{\gamma,\epsilon}$ such that
\begin{equation}
\sum_{j=1}^N \| \mathrm{e}^{\mu (-1 x + R t (1-t))^2 - \frac{(1+\epsilon) R^2 t (1-t)}{16 \mu}} q_j\|_{L^2([0,1]\times \Gamma)} ^2 \geq \mathrm{e}^{C_{\gamma,\epsilon} R^2} \sum_{j=1}^N \|  u_j \|^2_{L^2\big([\frac{1-\epsilon}{2}, \frac{1+\epsilon}{2}] \times  [0,{\frac{\epsilon(1-\epsilon)^2 R}{4}}] \big)}.
\end{equation}
Thus we show that there exists a positive constant $\mathcal{C}_{\gamma,\epsilon, \bv}$ such that
\begin{equation}
\mathrm{e}^{C_{\gamma,\epsilon}R^2} \sum_{j=1}^N \| u_j \|_{L^2\big([\frac{1-\epsilon}{2}, \frac{1+\epsilon}{2}] \times  [0,{\frac{\epsilon(1-\epsilon)^2 R}{4}}] \big)} \leq \mathcal{C}_{\gamma,\epsilon, \bv} +  \mathcal{C}_{\gamma,\epsilon, \bv} \dfrac{\mathrm{e}^{C_2 R^2}}{M^2}  \mathcal{K} .
\end{equation}
By letting $M$ tend to infinity, we have
\begin{equation}
\mathrm{e}^{C_{\gamma,\epsilon}R^2} \sum_{j=1}^N \| u_j \|_{L^2\big([\frac{1-\epsilon}{2}, \frac{1+\epsilon}{2}] \times  [0,{\frac{\epsilon(1-\epsilon)^2 R}{4}}] \big)} \leq \mathcal{C}_{\gamma,\epsilon, \bv} .
\end{equation}
Next, using the error estimate
\begin{equation}
\mathcal{N}^{-1} \|  u_j(0) \|_2 \leq \| u_j(t) \|_2 \leq \mathcal{N} \| u_j(0) \|_2,\ \ \mathcal{N}=\mathrm{e}^{\sup_{[0,1]}\|\Im \bv(t)\|_\infty},
\end{equation}
which can be proved in the same way as the analogous estimate in the real line, and
\begin{equation}
\| u_j(t) \|_2 \leq \| u_j(t) \|_{L^2\big(  [0, {\frac{\epsilon(1-\epsilon)^2 R}{4}}] \big)} + \mathrm{e}^{-\gamma R^2 \frac{\epsilon^2 (1-\epsilon)^4}{16}} \mathcal{C}_{\gamma,\epsilon, \bv}, \quad 0 \leq t \leq 1,
\end{equation}
 we show that there exists a positive constant
\begin{equation}
\widetilde{C}_{\gamma,\epsilon}=\min \bigg\{ C_{\gamma,\epsilon}, \dfrac{\gamma \epsilon^2(1-\epsilon)^4}{16} \bigg\},
\end{equation}
such that $\mathrm{e}^{\widetilde{C}_{\gamma,\epsilon} R^2} \| \bu(0) \|_{2} \leq \mathcal{C_{\gamma,\epsilon,\bv}}.$
%\begin{equation}
%\mathrm{e}^{\widetilde{C}_{\gamma,\epsilon} R^2} \| \bu(0) \|_{2} \leq \mathcal{C_{\gamma,\epsilon,\bv}}.
%\end{equation}
Indeed, 
%\begin{equation}
\begin{align}
\mathrm{e}^{\widetilde{C}_{\gamma,\epsilon} R^2} \| \bu(0) \|_{2} & \leq  \mathcal{N}\mathrm{e}^{\widetilde{C}_{\gamma,\epsilon} R^2}\sum_{j=1}^N \| u_j(t) \|_{2} \\ 
&\lesssim \mathcal{N} \mathrm{e}^{\widetilde{C}_{\gamma,\epsilon}R^2} \sum_{j=1}^N \| u_j \|_{L^2\big([0, {\frac{\epsilon(1-\epsilon)^2 R}{4}}] \big)}+  \mathrm{e}^{\widetilde{C}_{\gamma,\epsilon}R^2 -\gamma R^2 \frac{\epsilon^2 (1-\epsilon)^4}{16}}
\end{align}
%\end{equation}
and hence, integrating the last inequality in $t\in[{\frac{1-\epsilon}{2}}, {\frac{1+\epsilon}{2}}]$,
\begin{equation}
\begin{aligned}
C_{\epsilon} \mathrm{e}^{\widetilde{C}_{\gamma,\epsilon} R^2} \| \bu(0) \|_{2} & \leq \mathcal{N} \mathrm{e}^{\widetilde{C}_{\gamma,\epsilon}R^2} \sum_{j=1}^N \| u_j \|_{L^2\big( [{\frac{1-\epsilon}{2}}, {\frac{1+\epsilon}{2}}]  \times [0, {\frac{\epsilon(1-\epsilon)^2 R}{4}}] \big)}+ C_{\epsilon} \mathrm{e}^{\widetilde{C}_{\gamma,\epsilon} -\gamma \frac{\epsilon^2 (1-\epsilon)^4}{16} R^2} \leq \mathcal{C}.
\end{aligned}
\end{equation}
Letting $R$ go to infinity, we conclude $\bu \equiv 0$.
\end{proof}

\section{Appendix. Appell transform}\label{sec-Appell transform}

In order to study the behavior of solutions of the Schr\"odinger equation with a potential, we will restrict ourselves to the case where the rates of decay at times $t=0$ and $t=1$ are the same. We will reduce from the general case to this case by means of the so-called Appell transformation (see \cite{MR2443923} for the proof).

%The previous results are stated under the assumption that the decay at time $t=0$ and at $t=1$ is the same. We will reduce from the general case to this case by means of the so-called Appel transformation (see [CITE] for the proof)

\begin{lemma}
Assume that $\bu(s,y)$ verifies
\[
\partial_s\bu =(A+iB)(\Delta_\Gamma \bu+\bv(s,y)\bu+{\bf F}(s,y)),\ \text{ in } [0,1]\times\Gamma,
\]
where $A+iB\ne0,\ \alpha$ and $\beta$ are positive, $\gamma\in \mathbb{R}$, and set
\[
\tilde{\bu}(t,x)=\left(\frac{(\alpha\beta)^{1/4}}{\sqrt{\alpha}(1-t)+\sqrt{\beta} t}\right)^{1/2}\bu\left(\frac{\sqrt{\beta} t}{\sqrt{\alpha}(1-t)+\sqrt{\beta} t},\dfrac{(\alpha\beta)^{1/4}x}{\sqrt{\alpha}(1-t)+\sqrt{\beta} t}\right)\mathrm{e}^{\frac{(\sqrt{\alpha}-\sqrt{\beta})|x|^2}{4(A+iB)(\sqrt{\alpha}(1-t)+\sqrt{\beta} t)}}.
\]

Then $\tilde{\bu}$ verifies
\[
\partial_t\tilde{\bu} =(A+iB)(\Delta \tilde{\bu}+\tilde{\bv}(t,x)\tilde{\bu}+\tilde{\bf F}(t,x)),\ \text{ in } [0,1]\times\Gamma,
\]
with
\[
\tilde{\bv}(t,x)=\frac{\sqrt{\alpha\beta}}{(\sqrt{\alpha}(1-t)+\sqrt{\beta} t)^2}\bv\left(\frac{\sqrt{\beta} t}{\sqrt{\alpha}(1-t)+\sqrt{\beta} t},\dfrac{(\alpha\beta)^{1/4}x}{\sqrt{\alpha}(1-t)+\sqrt{\beta} t}\right),
\]
\[
\tilde{\bf F}(t,x)=\left(\frac{(\alpha\beta)^{1/4}}{\sqrt{\alpha}(1-t)+\sqrt{\beta} t}\right)^{5/2}{\bf F}\left(\frac{\sqrt{\beta} t}{\sqrt{\alpha}(1-t)+\sqrt{\beta} t},\dfrac{(\alpha\beta)^{1/4}x}{\sqrt{\alpha}(1-t)+\sqrt{\beta} t}\right)\mathrm{e}^{\frac{(\sqrt{\alpha}-\sqrt{\beta})|x|^2}{4(A+iB)(\sqrt{\alpha}(1-t)+\sqrt{\beta} t)}}.
\]

Moreover
\[
\|\mathrm{e}^{\gamma |x|^2}\tilde{\bf F}(t)\|=\frac{\sqrt{\alpha\beta}}{(\sqrt{\alpha}(1-t)+\sqrt{\beta} t)^2}\|\mathrm{e}^{\left[\frac{\gamma \sqrt{\alpha\beta}}{(\sqrt{\alpha} s+\sqrt{\beta} (1-s))^2}+\frac{(\sqrt{\alpha}-\sqrt{\beta})A}{4(A^2+B^2)(\sqrt{\alpha} s+\sqrt{\beta} (1-s))}\right] |y|^2}{\bf F}(s)\|
\]
and
\[
\|\mathrm{e}^{\gamma |x|^2}\tilde{\bu}(t)\|=\|\mathrm{e}^{\left[\frac{\gamma \sqrt{\alpha\beta}}{(\sqrt{\alpha} s+\sqrt{\beta} (1-s))^2}+\frac{(\sqrt{\alpha}-\sqrt{\beta})A}{4(A^2+B^2)(\sqrt{\alpha} s+\sqrt{\beta} (1-s))}\right]|y|^2}{\bu}(s)\|
\]
when $s=\frac{\sqrt{\beta} t}{\sqrt{\alpha}(1-t)+\sqrt{\beta} t}$.
\end{lemma}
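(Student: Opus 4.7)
The plan is to treat this as a reduction to the classical real-line Appell transform computation of \cite{MR2443923} applied component-wise on each edge of $\Gamma$, together with a verification that the transformation respects the Kirchhoff coupling at the central vertex. Set $\phi(t)=\sqrt{\alpha}(1-t)+\sqrt{\beta}\,t$, so that the change of variables is $s=\sqrt{\beta}\,t/\phi(t)$ and $y=(\alpha\beta)^{1/4}x/\phi(t)$; note the useful identity $\sqrt{\alpha}\,s+\sqrt{\beta}(1-s)=\sqrt{\alpha\beta}/\phi(t)$, which will govern all of the bookkeeping between the two sets of variables. Write the transformation as $\tilde{\bu}(t,x)=\lambda(t)\,\bu(s,y)\,\mathrm{e}^{g(t,x)}$ with $\lambda(t)=((\alpha\beta)^{1/4}/\phi(t))^{1/2}$ and $g(t,x)=(\sqrt{\alpha}-\sqrt{\beta})|x|^2/(4(A+\mathrm{i}B)\phi(t))$.

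First I will carry out the chain-rule verification of the PDE on a single edge, which is exactly the real-line calculation. Differentiating $\tilde{\bu}$ in $t$ and $x$ and using that $\partial_s\bu=(A+\mathrm{i}B)(\partial_{yy}\bu+\bv\bu+{\bf F})$, one checks by direct expansion that the cross terms produced by $\partial_{xx}$ (involving $y_x^2$, $g_x^2$, and $2 g_x y_x \partial_y\bu$) and those produced by $\partial_t$ (involving $\lambda'/\lambda$, $y_t$, and $g_t$) combine to yield $(A+\mathrm{i}B)\Delta\tilde{\bu}$ plus a potential and forcing term matching the stated $\tilde{\bv}$ and $\tilde{\bf F}$. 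The point is that the Gaussian exponent $g$ is designed precisely so that the first-order-in-$y$ drift coming from $y_t$ is cancelled by that coming from $2g_x y_x$, while $\lambda$ is designed so that the remaining zeroth-order terms cancel. All coefficients that survive after cancellation are polynomial in $1/\phi(t)$ and will collapse to exactly the claimed expressions. I do not expect any subtlety here beyond careful algebra.

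Next I will check that $\tilde{\bu}$ lies in the domain-compatible function space on $\Gamma$ for all $t$. Because the map $(t,x)\mapsto(s,y)$ does not couple different edges and because $g(t,x)$ depends on $x$ only through $|x|^2$, we have $g(t,0)$ independent of the edge and $g_x(t,0)=0$. Consequently continuity of $\bu$ at the root transfers to continuity of $\tilde{\bu}$, since $\tilde{u}^e_j(t,0)=\lambda(t)u_j(s,0)\mathrm{e}^{g(t,0)}$ with a common prefactor; and the Kirchhoff sum condition is preserved because $\partial_x\tilde{u}^e_j(t,0)=\lambda(t)(\alpha\beta)^{1/4}\phi(t)^{-1}\mathrm{e}^{g(t,0)}\,\partial_y u^e_j(s,0)$, so $\sum_{e} \partial_x\tilde u^e(t,0)$ differs from $\sum_e\partial_y u^e(s,0)$ by a nonzero factor independent of the edge.

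Finally I will establish the two norm identities by a direct change of variables on $\Gamma$. Using $dy=(\alpha\beta)^{1/4}\phi(t)^{-1}\,dx=|\lambda(t)|^2\,dx$ and $|x|^2=\phi(t)^2(\alpha\beta)^{-1/2}|y|^2$, the integrand $\mathrm{e}^{2\gamma|x|^2}|\tilde{\bu}|^2$ becomes $\mathrm{e}^{2\gamma|x|^2+2\Re g(t,x)}|\bu(s,y)|^2\,dy$; computing $\Re(1/(A+\mathrm{i}B))=A/(A^2+B^2)$ and substituting the above identity for $|x|^2$, the quadratic form in $|y|^2$ reduces, via $\phi(t)^2/\sqrt{\alpha\beta}=\sqrt{\alpha\beta}/(\sqrt{\alpha}s+\sqrt{\beta}(1-s))^2$ and $\phi(t)=\sqrt{\alpha\beta}/(\sqrt{\alpha}s+\sqrt{\beta}(1-s))$, to the exact bracketed expression stated in the lemma. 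The analogous computation handles $\tilde{\bf F}$. The only potential obstacle is the algebraic bookkeeping between $\phi(t)$ and $\sqrt{\alpha}s+\sqrt{\beta}(1-s)$; everything else is a direct transcription from the real-line argument of \cite{MR2443923}.
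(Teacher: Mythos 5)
Your proposal is correct and follows essentially the same route as the paper, which likewise reduces to the edge-wise real-line Appell computation of \cite{MR2443923} and notes only that the coupling conditions survive because the Gaussian weight's $x$-derivative vanishes at the vertex (your observation that $g_x(t,0)=0$ and that $\lambda(t)$, $y_x(t,0)$ are edge-independent is exactly the paper's remark about the derivative of $x\mapsto \mathrm{e}^{-x^2}$ vanishing at $x=0$). Your bookkeeping via the identity $\sqrt{\alpha}\,s+\sqrt{\beta}(1-s)=\sqrt{\alpha\beta}/(\sqrt{\alpha}(1-t)+\sqrt{\beta}\,t)$ and the change of variables $dy=|\lambda(t)|^2\,dx$ correctly reproduces both norm identities.
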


The proof is based on explicit computations and the fact that the first derivative of function $x\to \exp(-x^2)$ vanishes at $x=0$.

\bibliographystyle{plain}
\bibliography{biblio}

\begin{thebibliography}{10}

\bibitem{MR2804557}
R~Adami, C~Cacciapuoti, D~Finco, and D~Noja.
\newblock {Fast solitons on star graphs}.
\newblock {\em Rev. Math. Phys.}, 23(4):409--451, may 2011.

\bibitem{MR1476363}
C.~Cattaneo.
\newblock The spectrum of the continuous {L}aplacian on a graph.
\newblock {\em Monatsh. Math.}, 124(3):215--235, 1997.

\bibitem{cowlingekpv2010}
M.~Cowling, L.~Escauriaza, C.~E. Kenig, G.~Ponce, and L.~Vega.
\newblock The {H}ardy uncertainty principle revisited.
\newblock {\em Indiana Univ. Math. J.}, 59(6):2007--2025, 2010.

\bibitem{cowlingprice}
Michael Cowling and John~F. Price.
\newblock Generalisations of {H}eisenberg's inequality.
\newblock In {\em Harmonic analysis ({C}ortona, 1982)}, volume 992 of {\em
  Lecture Notes in Math.}, pages 443--449. Springer, Berlin, 1983.

\bibitem{MR2443923}
L.~Escauriaza, C.~E. Kenig, G.~Ponce, and L.~Vega.
\newblock Hardy's uncertainty principle, convexity and schr\"odinger
  evolutions.
\newblock {\em J. Eur. Math. Soc. (JEMS)}, 10(4):883--907, 2008.

\bibitem{ekpvsharp}
Luis Escauriaza, Carlos~E. Kenig, Gustavo Ponce, and Luis Vega.
\newblock The sharp {H}ardy uncertainty principle for {S}chr\"{o}dinger
  evolutions.
\newblock {\em Duke Math. J.}, 155(1):163--187, 2010.

\bibitem{gaveauokada87}
Bernard Gaveau, Masami Okada, and Tatsuya Okada.
\newblock Second order differential operators and {D}irichlet integrals with
  singular coefficients. {I}. {F}unctional calculus of one-dimensional
  operators.
\newblock {\em Tohoku Math. J. (2)}, 39(4):465--504, 1987.

\bibitem{hardyup}
G.~H. Hardy.
\newblock A {T}heorem {C}oncerning {F}ourier {T}ransforms.
\newblock {\em J. London Math. Soc.}, 8(3):227--231, 1933.

\bibitem{ignatsiam}
Liviu~I. Ignat.
\newblock Strichartz estimates for the {S}chr\"{o}dinger equation on a tree and
  applications.
\newblock {\em SIAM J. Math. Anal.}, 42(5):2041--2057, 2010.

\bibitem{ignatpazotorosier}
Liviu~I Ignat, Ademir~F Pazoto, and Lionel Rosier.
\newblock Inverse problem for the heat equation and the schr{\"o}dinger
  equation on a tree.
\newblock {\em Inverse Problems}, 28(1):015011, 2011.

\bibitem{MR2277618}
Vadim Kostrykin and Robert Schrader.
\newblock Laplacians on metric graphs: eigenvalues, resolvents and semigroups.
\newblock In {\em Quantum graphs and their applications}, volume 415 of {\em
  Contemp. Math.}, pages 201--225. Amer. Math. Soc., Providence, RI, 2006.

\bibitem{MR2459876}
P.~Kuchment.
\newblock Quantum graphs: an introduction and a brief survey.
\newblock In {\em Analysis on graphs and its applications}, volume~77 of {\em
  Proc. Sympos. Pure Math.}, pages 291--312. Amer. Math. Soc., Providence, RI,
  2008.

\bibitem{morganup}
G.~W. Morgan.
\newblock A {N}ote on {F}ourier {T}ransforms.
\newblock {\em J. London Math. Soc.}, 9(3):187--192, 1934.

\bibitem{nazarov}
F.~L. Nazarov.
\newblock Local estimates for exponential polynomials and their applications to
  inequalities of the uncertainty principle type.
\newblock {\em Algebra i Analiz}, 5(4):3--66, 1993.

\bibitem{rudin}
Walter Rudin.
\newblock {\em Real and complex analysis}.
\newblock McGraw-Hill Book Co., New York, third edition, 1987.

\end{thebibliography}
\end{document}